\title{A Self-Interaction Leading to Fluctuations of Order $n^{5/6}$}
\author{Matthias Gorny}
\newcommand{\ind}[1]{\mathds{1}_{#1}}
\newcommand{\N}{\mathbb{N}}
\newcommand{\R}{\mathbb{R}}
\newcommand{\E}{\mathbb{E}}
\newcommand{\Ll}{\mathrm{L}}
\newcommand{\Bl}{\mathrm{B}}
\newcommand{\Hc}{\mathcal{H}}
\newcommand{\Lc}{\mathcal{L}}
\newcommand{\Pc}{\mathcal{P}}
\newcommand{\Sc}{\mathcal{S}}
\newcommand{\Tc}{\mathcal{T}}
\newcommand{\Uc}{\mathcal{U}}
\newcommand{\Ck}[1]{\mathcal{C}^{#1}}
\def\Dro{\smash{{D}^{\!\!\!\!\raise4pt\hbox{$\scriptstyle o$}}}}
\def\Ccro{\smash{{\mathcal{C}}^{\!\!\!\raise4pt\hbox{$\scriptstyle o$}}}}
\def\Aro{\smash{{A}^{\!\!\!\raise5pt\hbox{$\scriptstyle o$}}}}
\def\Bro{\smash{{B}^{\!\!\!\raise5pt\hbox{$\scriptstyle o$}}}}
\newcommand{\limsupn}{\underset{n \to +\infty}{\mathrm{limsup}}\,}
\newcommand{\liminfn}{\underset{n \to +\infty}{\mathrm{liminf}}\,}
\renewcommand{\a}{\alpha}
\renewcommand{\b}{\beta}
\newcommand{\g}{\gamma}
\newcommand{\G}{\Gamma}
\renewcommand{\d}{\delta}
\renewcommand{\epsilon}{\varepsilon}
\newcommand{\eps}{\varepsilon}
\renewcommand{\r}{\rho}
\newcommand{\s}{\sigma}
\renewcommand{\phi}{\varphi}
\renewcommand{\L}{\Lambda}
\newtheorem{theo}{Theorem}
\newtheorem{hyps}[theo]{Hypothesis}
\newtheorem{prop}[theo]{Proposition}
\newtheorem{lem}[theo]{Lemma}
\renewenvironment{proof}{\noindent{\bf Proof.}}{\qed}
\newcommand{\point}{\!\scriptscriptstyle{\bullet}}
\begin{document}

\renewcommand{\contentsname}{Contents}
\renewcommand{\refname}{\textbf{References}}
\renewcommand{\abstractname}{Abstract}

\begin{center}
\begin{Huge}
A Self-Interaction Leading\medskip

to Fluctuations of Order $n^{5/6}$
\end{Huge}\bigskip\bigskip \bigskip \bigskip

\begin{Large} Matthias Gorny \end{Large} \smallskip
 
\begin{large} {\it Universit\'e Paris Sud \emph{and} ENS Paris} \end{large} \bigskip \bigskip 

\end{center}
\bigskip \bigskip \bigskip

\begin{abstract}
\noindent In~\cite{CerfGorny}, we built and studied a Curie-Weiss model exhibiting self-organized criticality : it is a model with a self-interaction leading to fluctuations of order $n^{3/4}$ and a limiting law proportional to $\exp(-x^4/12)$. In this paper we modify our model  in order to \og kill the term $x^4$ \fg{} and to obtain a self-interaction leading to fluctuations of order $n^{5/6}$ and a limiting law $C\,\exp(-\lambda x^6)\,dx$, for suitable positive constants $C$ and $\lambda$.
\end{abstract}
\bigskip \bigskip \bigskip \bigskip \bigskip

\noindent {\it AMS 2010 subject classifications:} 60F05 60K35.

\noindent {\it Keywords:} Cram\'er transform, Laplace's method, SOC, critical fluctuations.

\newpage

\section{Introduction}
\label{Intro}

\noindent This paper is a sequel to the articles~\cite{CerfGorny} and~\cite{Gorny3}, in which we built and studied a Curie-Weiss model exhibiting self-organized criticality. It was the model given by the distribution
\[\frac{1}{Z_{n}}\exp\left(\frac{1}{2}\frac{(x_{1}+\dots+x_{n})^{2}}{x_{1}^{2}+\dots+x_{n}^{2}}\right)\ind{\{x_{1}^{2}+\dots+x_{n}^{2}>0\}}\,\prod_{i=1}^{n}d\r(x_{i})\,,\]
where $Z_n$ is a renormalisation factor. We proved rigorously that this model exhibits a simple phenomenon of self-organized criticality : if we build the model with a symmetric probability $\r$ on $\R$ satisfying some integrability conditions, then the sum $S_n$ of the random variables behaves as in the critical generalized Ising Curie-Weiss model (see~\cite{EN}). More precisely, the fluctuations of $S_n$ are of order $n^{3/4}$ and the limiting law is
\[\left(\frac{4}{3}\right)^{1/4}\G\left(\frac{1}{4}\right)^{-1} \exp\left(-\frac{s^{4}}{12}\right)\,ds\,.\]

\noindent The purpose of this article is to \og kill the term $x^4$ \fg{}. We modify the distribution we studied in~\cite{CerfGorny} and~\cite{Gorny3} in order to obtain a self-interaction leading to fluctuations of order $n^{5/6}$ and a limiting law
\[C\,\exp(-\lambda x^6)\,dx\,,\]
where $C$ and $\lambda$ are some positive constants.
\medskip

\noindent To this end, we first focus on the reasons why the fluctuations of $S_n$ in the model we studied in~\cite{CerfGorny} are of order $n^{3/4}$. The interacting term of the model is
\[F\big(x_{1}+\dots+x_{n},x_{1}^{2}+\dots+x_{n}^{2}\big)\,,\]
where $F(x,y)=x^2/(2y)$ for $(x,y) \in \R\times \,]0,+\infty[$. Let $I$ be the rate function for the large deviations of
\[\frac{1}{n}\sum_{k=1}^n (X_k,X_k^2),\quad n\geq 1\,,\]
where $(X_k)_{k\geq 1}$ is a sequence of independent random variables with common law~$\r$. By analysing the proofs in~\cite{CerfGorny}, we can see that the fluctuations of $S_n$ are of order $n^{3/4}$ because, in the expansion of the function $I-F$ around its minimum, the first non-vanishing term with the variable $x$ (corresponding to $S_n/n$) appears in the fourth order. More precisely, if $\s^2$ denotes the variance of $\r$ and  $\mu_4$ its fourth moment, this term is $\mu_4 x^4/(12\s^8)$.\medskip

\noindent As a consequence, in order to \og kill the term $x^4$ \fg{}, we are going to modify the interacting function $F$ of our model into some function $H$ so that, in the expansion of the function $I-H$ around its minimum, the first term with the variable $x$ only appears in the sixth order. We could consider
\[H(x,y)=F(x,y)+\frac{\mu_4 x^4}{12 \s^8}, \quad (x,y) \in \R\times \,]0,+\infty[\,.\]
However we want to build a self-interaction, thus we estimate $\mu_4$ by $(x_1^4+\dots+x_n^4)/n$ (as we estimated $\s^2$ by $(x_1^2+\dots+x_n^2)/n$ in order to build the model in~\cite{CerfGorny}). That is why the interacting term we want to consider is
\begin{multline*}
H\left(x_1+\cdots+x_n,\,x_1^2+\cdots+x_n^2,\,x_1^4+\cdots+x_n^4\right)\\
=\frac{1}{2}\frac{(x_{1}+\dots+x_{n})^{2}}{x_{1}^{2}+\dots+x_{n}^{2}}+
\frac{1}{12}\frac{(x^4_{1}+\dots+x^4_{n})(x_{1}+\dots+x_{n})^{4}}{(x_{1}^{2}+\dots+x_{n}^{2})^4}\,.
\end{multline*}
We observe with computer simulations that, with this interacting term, for several probability measures $\r$, the fluctuations of the sum $S_n$ are of order $n^{5/6}$ and the limiting law is proportional to $\exp(-\lambda x^6)$ for some $\lambda>0$.\medskip

\noindent In sections~\ref{Preliminaries} and~\ref{ExpansionI*} we initiate the proof of a fluctuations theorem for $S_n$ with this interacting function $H$. We use the same techniques as in~\cite{CerfGorny} : we compute the expansion of $I_{\point}-H$ where $I_{\point}$ is the rate function for the large deviations of
\[\frac{1}{n}\sum_{k=1}^n (X_k,X_k^2,X_k^4),\quad n\geq 1\,.\]

\noindent Unfortunately we encountered several problems with the rest of the proof : the techniques we used in~\cite{CerfGorny} have not been successful and we had to modify $H$. Our investigations to build an interacting function $H$ leading to fluctuations of order $n^{5/6}$ and amenable to a mathematical analysis led us to consider the following model :\medskip

\noindent{\bf The model.} \textit{Let $\r$ be a probability measure on $\R$ which is not the Dirac mass at $0$. Let $H$ be the function given by
\[\forall (x,y,z) \in \R\times \R\backslash\{0\}\times \R\qquad H(x,y,z)=\frac{x^{2}}{2y}+\frac{1}{12}\frac{zx^{4}y^{5}}{y^{9}+x^{10}+zx^4y^4}\,.\]
For any $n\geq 1$, we denote
\begin{multline*}
Z_{H,n}=\int_{\R^{n}}\exp\left(H\left(x_1+\cdots+x_n,\,x_1^2+\cdots+x_n^2,\,x_1^4+\cdots+x_n^4\right)\right)\\
\times\ind{\{x_{1}^{2}+\dots+x_{n}^{2}>0\}}\,\prod_{i=1}^{n}d\r(x_{i})\,.
\end{multline*}
We consider $(X^{n}_{k})_{1\leq k \leq n}$ an infinite triangular array of real-valued random variables such that, for all $n \geq 1$, $(X^{1}_{n},\dots,X^{n}_{n})$ has the law $\widetilde{\mu}_{H,n,\r}$, which is the distribution with density
\[\frac{1}{Z_{H,n}}\exp\left(H\left(x_1+\cdots+x_n,\,x_1^2+\cdots+x_n^2,\,x_1^4+\cdots+x_n^4\right)\right)\ind{\{x_{1}^{2}+\dots+x_{n}^{2}>0\}}\]
with respect to $\r^{\otimes n}$. We denote
\[S_{n}=X^{1}_{n}+\dots+X^{n}_{n}, \quad T_{n}=(X^{1}_{n})^{2}+\dots+(X^{n}_{n})^{2} \quad \mbox{and} \quad U_{n}=(X^{1}_{n})^{4}+\dots+(X^{n}_{n})^{4}.\]}

\noindent This model is well-defined : $Z_{H,n}$ is finite for any $n\geq 1$. Indeed
\[\forall (x,y,z) \in \R\times \R\backslash\{0\}\times \R\qquad H(x,y,z)\leq \frac{x^{2}}{2y}+\frac{zx^{4}}{12y^{4}}=\frac{x^{2}}{2y}+\left(\frac{x^{2}}{y}\right)^{2}\frac{z}{12y^2}\,.\]
We have
\[\forall (x_{1},\dots,x_{n})\in \R^{n}\qquad \left(\sum_{i=1}^{n}x_{i}^{2}\right)^{2} \geq \,\sum_{i=1}^{n}x_{i}^{4}\]
and, by convexity of the function $t\longmapsto t^2$, we get
\[\forall n \geq 1 \qquad 1\leq Z_{H,n} \leq \exp\left(\frac{n}{2}+\frac{n^{2}}{12}\right)<+\infty\,.\]

\noindent We state next our main result :

\begin{theo} Let $\r$ be a symmetric probability measure on $\R$ whose support contains at least five points and such that
\[ \exists w_0>0 \qquad  \int_{\R} e^{w_0 z^4}\,d\r(z)<+\infty\,.\]
We denote by $\s^2$ the variance of $\r$, by $\mu_4$ its fourth moment, by $\mu_6$ its sixth moment and by $\mu_8$ its eighth moment. We assume that
\[5\mu_{4}^{2}>2\s^{2}\mu_{6}\,.\]
Then, under $\widetilde{\mu}_{H,n,\r}$, $(S_{n}/n,T_{n}/n,U_n/n)$ converges in probability to $(0,\s^{2},\mu_4)$.\medskip

\noindent Moreover, if $\r$ admits a bounded density with respect to the Lebesgue measure on $\R$, then, under $\widetilde{\mu}_{H,n,\r}$,
\[\left(\frac{\mu_4^2}{\s^2}-\frac{2\mu_6}{5}\right)^{1/6}\frac{S_{n}}{\s^2n^{5/6}} \overset{\Lc}{\underset{n \to \infty}{\longrightarrow}} \left(\frac{81}{2}\right)^{1/6} \G\left(\frac{1}{6}\right)^{-1}\exp\left(-\frac{s^6}{18}\right)\,ds\,.\]
\label{MainTheo}
\end{theo}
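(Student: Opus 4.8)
The plan is to follow the strategy of~\cite{CerfGorny}: reduce everything to a sum of i.i.d.\ triples, apply a large deviation / Cram\'er transform to exponentiate the density in terms of the empirical mean $W_n = \frac1n\sum_k (X_k,X_k^2,X_k^4)$, and then perform a Laplace-type analysis of the resulting integral. Concretely, let $I_{\point}$ be the rate function for $W_n$ under $\r^{\otimes n}$ (the Cram\'er transform of the law of $(X,X^2,X^4)$), which is analysed in sections~\ref{Preliminaries} and~\ref{ExpansionI*}. The density of $(S_n/n,T_n/n,U_n/n)$ under $\widetilde\mu_{H,n,\r}$ is then, up to the normalisation $Z_{H,n}/n^3$ and sub-exponential corrections coming from the local CLT (this is where the bounded-density hypothesis on $\r$ enters), proportional to $\exp\big(-n\,(I_{\point}-H)(x,y,z)\big)$ on the relevant domain. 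So the first step is to locate the global minimum of $\Phi := I_{\point}-H$ and show it is attained uniquely at $m_0 := (0,\s^2,\mu_4)$; the symmetry of $\r$ makes $m_0$ a critical point, and one checks via the Hessian / the local expansion that it is a strict minimum, which already yields the law-of-large-numbers statement $(S_n/n,T_n/n,U_n/n)\to(0,\s^2,\mu_4)$ in probability by a standard Laplace/concentration argument.

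For the fluctuation statement the key input is the Taylor expansion of $\Phi$ around $m_0$. Writing $x = S_n/n$, $y = \s^2 + $ (a $T_n$-fluctuation), $z = \mu_4 + $ (a $U_n$-fluctuation), the design of $H$ is precisely such that all terms of order $x^2$ and $x^4$ cancel between $I_{\point}$ and $H$ (the $x^2/(2y)$ piece kills the Gaussian $x$-term, and the added term $\tfrac1{12} zx^4y^5/(y^9+x^{10}+zx^4y^4)$ behaves like $\tfrac{\mu_4}{12\s^8}x^4$ near $m_0$ and kills the quartic term identified in~\cite{CerfGorny}). One must then compute the leading surviving $x$-term, which should be $c\,x^6$ with $c = \tfrac1{6!}\cdot(\text{something})$; matching with the claimed limit forces $c$ to be (a positive multiple of) $\tfrac1{\s^{12}}\big(\tfrac{\mu_4^2}{\s^2}-\tfrac{2\mu_6}{5}\big)$ — this is exactly why the hypothesis $5\mu_4^2>2\s^2\mu_6$ is needed: it guarantees $c>0$ so that the sixth-order term is confining. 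The quadratic form in the $y$- and $z$-directions (and the mixed $x^2$-times-$(y,z)$ terms) is positive definite, so after integrating out the $y,z$ fluctuations one is left with an effective one-dimensional weight $\exp(-n c x^6)$ in the variable $x = S_n/n$. Rescaling $S_n/n = t\, n^{-1/6}$ turns this into $\exp(-c t^6)$ with a uniform-in-compacts convergence of the density, and after the final bookkeeping of constants (identifying $c=1/18$ after the normalisation $\big(\tfrac{\mu_4^2}{\s^2}-\tfrac{2\mu_6}{5}\big)^{1/6}/(\s^2)$ and computing $\int_\R e^{-s^6/18}\,ds$ in terms of $\G(1/6)$) one obtains the stated limiting density $\big(\tfrac{81}{2}\big)^{1/6}\G(1/6)^{-1}\exp(-s^6/18)\,ds$.

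The technical heart — and the step I expect to be the main obstacle — is making the passage from $\exp(-n\Phi)$ to the limiting integral rigorous uniformly, i.e.\ a genuine Laplace method in three variables with a degenerate (order-six) direction. Three sub-issues need care. First, the exponential-tilting/local-CLT step requires controlling the density of $W_n$ not just near $m_0$ but globally, and showing the contribution from outside a neighbourhood of $m_0$ is exponentially negligible; this uses the bounded-density assumption and an argument that $\Phi$ stays bounded away from its minimum outside any fixed ball (including controlling $\Phi$ near the boundary $y=0$ and for large $\|(x,y,z)\|$, using the explicit upper bound on $H$ already exploited to show $Z_{H,n}<\infty$). Second, one needs the expansion of $I_{\point}$ to sufficient order (sixth order in $x$, with the relevant mixed and quadratic terms in $y,z$) with explicit control of the remainder — this is the content of section~\ref{ExpansionI*} and feeds directly into identifying $c$. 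Third, the anisotropic rescaling ($x\sim n^{-1/6}$ but $y-\s^2, z-\mu_4 \sim n^{-1/2}$) means the neighbourhood shrinks at different rates in different directions, so the change of variables and the domination argument must be set up with care; but once the expansion and the tail estimates are in hand, this is the same bookkeeping as in~\cite{CerfGorny} with $n^{3/4}$ replaced by $n^{5/6}$ and the quartic replaced by the sextic. A final routine check is that the condition ``support contains at least five points'' ensures $I_{\point}$ is finite and smooth in a neighbourhood of $m_0$ (so that the expansion makes sense and the Hessian in the $(y,z)$-block is nondegenerate).
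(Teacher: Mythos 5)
Your overall architecture (exponential tilting to write the law of $(S_n/n,T_n/n,U_n/n)$ in terms of $e^{-n(\text{rate}-\text{interaction})}$, sixth-order expansion of $I_{\point}$, anisotropic rescaling $x\sim n^{-1/6}$, $y-\s^2,z-\mu_4\sim n^{-1/2}$, local limit theorem plus Laplace's method) is indeed the paper's strategy, and your identification of the role of $5\mu_4^2>2\s^2\mu_6$ is correct. But there is a genuine gap at the step you treat as routine: ``locate the global minimum of $\Phi:=I_{\point}-H$ and show it is attained uniquely at $(0,\s^2,\mu_4)$ \dots\ one checks via the Hessian / the local expansion that it is a strict minimum.'' First, the exponent appearing in the density is not $n\,\Phi$ but $n\bigl(I_{\point}-\tfrac1n H(nx,ny,nz)\bigr)$, and the specific $H$ of the theorem is \emph{not} homogeneous: $\tfrac1n H(nx,ny,nz)=F(x,y)+R_n(x,y,z)$ with $R_n(x,y,z)=\tfrac1{12}\,zx^4y^5/(y^9+nx^{10}+zx^4y^4)$, so the function to be minimised depends on $n$. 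Second, and more importantly, the local expansion only yields a strict \emph{local} minimum; the global uniqueness needed both for the law of large numbers and for the tail control in Laplace's method is exactly what the authors state they could \emph{not} prove for the naive interaction $F+R$ with $R=zx^4/(12y^4)$ (even under $5\mu_4^2>2\s^2\mu_6$). The entire reason for the $x^{10}$ term in the denominator of $H$ is to produce a nonincreasing sequence $R_n\downarrow 0$ with $R_n\le R$ and $n(R-R_n)\to 0$ under the anisotropic rescaling; the paper then combines a Dini-type lemma (uniform convergence of $I_{\point}-F-R_n$ to $I_{\point}-F$ on compacts), the two-dimensional result that $I-F$ has a unique minimum at $(0,\s^2)$, and strict convexity of $I_{\point}$ in $z$ to force a unique minimum of $I_{\point}-F-R_{n_0}$ for some finite $n_0$. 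Without this (or an alternative global argument), your proof of both the convergence in probability and the negligibility of the region outside a neighbourhood of $(0,\s^2,\mu_4)$ does not go through.

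A second, smaller omission: your tail control ``near the boundary $y=0$ and for large $\|(x,y,z)\|$'' is asserted but not supplied, and it is not a consequence of the bound used to show $Z_{H,n}<\infty$. The paper needs two separate inputs here: an exponential bound on $\int e^{nF}\ind{0<y\le\d}\,d\widetilde\nu_{\point n,\r}$ imported from~\cite{Gorny3} to handle the line $x=y=0$ (where $F$ blows up), and the moment condition $\int e^{w_0z^4}d\r<\infty$ together with $R_n(x,y,z)\le c_0y\le c_0\sqrt z$ to handle $z\to+\infty$; only then does a Varadhan-type upper bound apply on the remaining compact set. Finally, note that ``bounded density'' enters not directly through a local CLT for $\r$ but through verifying an integrability condition on $f^p(x)f^p(y)f^p(z)/|(x+y+z)(x-y)(y-z)(z-x)|^{p-1}$, which guarantees that the threefold convolution of the law of $(Z,Z^2,Z^4)$ has an $\Ll^p$ density; this computation (via the Jacobian $8(x+y+z)(x-y)(y-z)(z-x)$) is a nontrivial step your outline subsumes under ``local CLT.''
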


\noindent In section~\ref{FluctuationsTheorem}.\ref{FluctuationsTheoremProof}, we will actually prove this theorem for more general interacting functions $H$ and more general probability measures $\r$.\medskip

\noindent After giving some preliminaries, simulations and notations in section~\ref{Preliminaries}, we study the smoothness of $I_{\point}$ and we compute its expansion around $(0,\s^2,\mu_4)$ in section~\ref{ExpansionI*}. Next, in section~\ref{ConstructionInteraction}, we explain the first problems we encounter and we investigate how to build an interacting term which is amenable to a mathematical analysis. Finally, in section~\ref{FluctuationsTheorem}, we give the proof of (an extended version of) theorem~\ref{MainTheo}. We end this paper by a discussion about a model with fluctuations of order $n^{1-1/2k}$ for $k\geq 4$.
\newpage

\section{Preliminaries}
\label{Preliminaries}

\noindent We denote by $F$ the function defined by  
\[\forall (x,y) \in \R\times \R\backslash\{0\} \qquad F(x,y)=\frac{x^2}{2y}\,.\]

\noindent We recall the following proposition, which is proved in section~5~of~\cite{CerfGorny} :

\begin{prop} Let $\r$ be a symmetric probability measure on $\R$ with variance $\s^2>0$ such that the function
\[\L : (u,v) \in \R^2 \longmapsto \ln \int_{\R}e^{uz+vz^2}\,d\r(z)\]
is finite in the neighbourhood of $(0,0)$. We define $I$ by
\[\forall (x,y)\in \R^2 \qquad I(x,y)=\sup_{(u,v)\in \R^2}\,\left(ux+vy-\L(u,v)\right)\,.\]
Then the function $I-F$ has a unique minimum on $\R\times \R\backslash\{0\}$ at $(0,\s^2)$, with $(I-F)(0,\s^2)=0$. Moreover, if the support of $\r$ contains at least three points and if $\mu_4$ denotes the fourth moment of $\r$, then, when $(x,y)$ goes to $(0,\s^2)$,
\[I(x,y)-F(x,y) \sim \frac{\mu_4 x^4}{12\s^8}+\frac{(y-\s^2)^4}{2(\mu_4-\s^4)}\,.\]
\label{minI-F}
\end{prop}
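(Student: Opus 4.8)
The plan is to treat the three assertions in turn. Throughout I use the standard facts that $\L$ is real-analytic and convex on the interior $\Dc^{\circ}$ of the (convex) set where it is finite, which contains a neighbourhood of $(0,0)$; that $\r$ then has moments of every order; that by symmetry $\L(u,v)=\L(-u,v)$, so $\partial_{u}\L(0,v)\equiv 0$ and $\nabla\L(0,0)=(0,\s^{2})$; and that $I$, being the Legendre transform of $\L$, is convex, nonnegative, lower semicontinuous, and real-analytic on the open set $\nabla\L(\Dc^{\circ})$, where $\nabla I=(\nabla\L)^{-1}$. In particular $I(0,\s^{2})=0$ and $\nabla I(0,\s^{2})=(0,0)$; since also $F(0,\s^{2})=0$ and $\nabla F(0,\s^{2})=(0,0)$, the point $(0,\s^{2})$ is a critical point of $I-F$ with value $0$.

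\medskip
\noindent\textbf{Global minimum.} For $y<0$ one has $F(x,y)\le 0$ whereas $I(0,y)=+\infty$ (let $v\to-\infty$ in $vy-\L(0,v)$, using $e^{vz^{2}}\le 1$), so $I-F>0$ there. For $y>0$ the crux is the elementary inequality $\ln\cosh a\le a^{2}/2$ for $a\in\R$, with equality only at $a=0$; by the symmetry of $\r$ it gives
\[
\forall t\in\R\qquad \L\!\left(t,-\tfrac{t^{2}}{2}\right)=\ln\E_{\r}\!\left[\cosh(tZ)\,e^{-t^{2}Z^{2}/2}\right]\le 0 .
\]
Evaluating the supremum defining $I$ at $(u,v)=(t,-t^{2}/2)$ and then optimising over $t$ yields $I(x,y)\ge\sup_{t}(tx-t^{2}y/2)=x^{2}/(2y)=F(x,y)$. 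If equality held at some $(x_{0},y_{0})$ with $y_{0}>0$, then with $t_{0}=x_{0}/y_{0}$ every inequality in the chain becomes an equality, so $\E_{\r}[\cosh(t_{0}Z)e^{-t_{0}^{2}Z^{2}/2}]=1$; as the integrand is $\le 1$ with equality exactly where $t_{0}Z=0$ and $\r$ is not a Dirac mass, this forces $t_{0}=0$, hence $x_{0}=0$. Then $I(0,y_{0})=0$, and the expansion $\L(0,v)=\s^{2}v+\tfrac12(\mu_{4}-\s^{4})v^{2}+o(v^{2})$ forces $v(y_{0}-\s^{2})\le\tfrac12(\mu_{4}-\s^{4})v^{2}+o(v^{2})$ for all small $v$, whence $y_{0}=\s^{2}$. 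Thus $(0,\s^{2})$ is the unique minimiser; this part uses only $\s^{2}>0$.

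\medskip
\noindent\textbf{Smoothness and the expansion near $(0,\s^{2})$.} If the support of $\r$ has at least three points, then $1,z,z^{2}$ are linearly independent in $\Ll^{2}(\r)$, so $\mathrm{Hess}\,\L(0,0)=\mathrm{diag}(\s^{2},\mu_{4}-\s^{4})$ is positive definite (in particular $\mu_{4}>\s^{4}$); by the inverse function theorem $\nabla\L$ is a local diffeomorphism at $(0,0)$, the point $(0,\s^{2})$ lies in $\nabla\L(\Dc^{\circ})$, and $I$ is analytic near $(0,\s^{2})$. Next I would exploit $\partial_{u}\L(0,v)\equiv 0$: it makes $\mathrm{Hess}\,\L(0,v)$ diagonal along $\{u=0\}$, and since $\partial_{u}^{2}\L(0,v)=\E_{\r_{0,v}}[Z^{2}]=\partial_{v}\L(0,v)$ (with $\r_{0,v}\propto e^{vz^{2}}\,d\r$), the identity $\nabla I=(\nabla\L)^{-1}$ gives the \emph{exact} relation $\partial_{x}^{2}I(0,y)=1/y$ for $y$ near $\s^{2}$. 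Hence $R:=I-F$ satisfies $R(0,y)=I(0,y)=I_{2}(y)$, the Cram\'er transform of $Z^{2}$ (with $I_{2}(y)=(y-\s^{2})^{2}/(2(\mu_{4}-\s^{4}))+O((y-\s^{2})^{3})$), and $\partial_{x}^{2}R(0,y)\equiv 0$; since $R$ is even in $x$,
\[
R(x,y)=I_{2}(y)+\frac{\partial_{x}^{4}I(0,\s^{2})}{24}\,x^{4}+o\big((y-\s^{2})^{2}+x^{4}\big) .
\]
To compute $\partial_{x}^{4}I(0,\s^{2})$ I would use the partial Legendre representation $I(x,y)=\sup_{v}\big(J_{v}(x)+vy-\L(0,v)\big)$, where $J_{v}$ is the Cram\'er transform of the symmetric tilted law $\r_{0,v}$; at $(x,y)=(0,\s^{2})$ the optimiser is $v=0$, and a first-order analysis identifies the optimiser as $v^{*}(x)=-x^{2}/(2\s^{4})+O(x^{4})$. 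Substituting this together with the fourth-order expansion $J_{v}(x)=x^{2}/(2\tau(v)^{2})-\kappa_{4}(v)\,x^{4}/(24\,\tau(v)^{8})+O(x^{6})$ --- where $\tau(v)^{2}=\partial_{v}\L(0,v)$ and $\kappa_{4}(v)=\E_{\r_{0,v}}[Z^{4}]-3\tau(v)^{4}$, so that $\tau(0)^{2}=\s^{2}$, $\kappa_{4}(0)=\mu_{4}-3\s^{4}$ and $\partial_{v}^{2}\L(0,0)=\mu_{4}-\s^{4}$ --- and collecting the $x^{4}$-terms should give $\partial_{x}^{4}I(0,\s^{2})=2\mu_{4}/\s^{8}$, hence
\[
I(x,y)-F(x,y)\ \sim\ \frac{\mu_{4}\,x^{4}}{12\,\s^{8}}+\frac{(y-\s^{2})^{2}}{2(\mu_{4}-\s^{4})}\qquad\text{as }(x,y)\to(0,\s^{2}) .
\]

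\medskip
\noindent\textbf{Main obstacle.} The two delicate points are: (i) the uniqueness of the minimiser, which rests on the non-obvious substitution $v=-u^{2}/2$ and on the equality case of $\ln\cosh a\le a^{2}/2$; and (ii) the fourth-order bookkeeping of the previous paragraph, in particular the exact relation $\partial_{x}^{2}I(0,y)=1/y$ --- which is precisely what kills the cross term $x^{2}(y-\s^{2})$ in the Taylor expansion of $I-F$ at $(0,\s^{2})$, and without which the stated asymptotic equivalent would be incorrect --- followed by the somewhat tedious extraction of the constant $\mu_{4}/(12\,\s^{8})$.
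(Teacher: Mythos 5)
This paper does not prove proposition~\ref{minI-F} at all: it is recalled from section~5 of~\cite{CerfGorny}, so the only internal point of comparison is the three-variable analogue worked out in section~\ref{ExpansionI*}. Your argument is correct in substance. For the minimum, the substitution $(u,v)=(t,-t^{2}/2)$ combined with $\ln\cosh a\le a^{2}/2$ is exactly the mechanism of the cited proof (there one can package the same inequality as $\L(u,v)\le\L(0,v+u^{2}/2)$, which gives $I(x,y)\ge F(x,y)+I(0,y)$ and settles uniqueness in one stroke); your equality-case analysis is legitimate because $\L(t,-t^{2}/2)\le 0<+\infty$ for every $t$, and the small slip ``$I(0,y)=+\infty$ for $y<0$'' should read $I(x,y)=+\infty$ (take $u=0$ and let $v\to-\infty$ in the supremum). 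For the expansion you take a genuinely cleaner route than the brute-force differentiation of $(\nabla\L)^{-1}$ that the paper carries out in section~\ref{ExpansionI*}: the exact identity $\partial_{x}^{2}I(0,y)=1/y$, coming from $\partial_{u}^{2}\L(0,v)=\partial_{v}\L(0,v)$ and the vanishing of the off-diagonal Hessian entry by symmetry, explains structurally why every term $x^{2}(y-\s^{2})^{k}$ cancels against $F$, whereas the computational approach only sees this through the numerical coincidence $\partial^{3}I/\partial x^{2}\partial y=-1/\s^{4}$ matching the expansion of $x^{2}/(2y)$.

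Two caveats. First, the evaluation $\partial_{x}^{4}I(0,\s^{2})=2\mu_{4}/\s^{8}$ is only sketched (``should give''); it does check out --- with $v^{*}(x)=-x^{2}/(2\s^{4})+O(x^{4})$ the three contributions to the $x^{4}$-coefficient are $(\mu_{4}-\s^{4})/(4\s^{8})$, $-(\mu_{4}-3\s^{4})/(24\s^{8})$ and $-(\mu_{4}-\s^{4})/(8\s^{8})$, which sum to $\mu_{4}/(12\s^{8})$, and the $O(x^{4})$ error in $v^{*}$ only perturbs the value at order $x^{8}$ by stationarity --- but as written it is a plan rather than a proof. Second, your final equivalent carries $(y-\s^{2})^{2}/(2(\mu_{4}-\s^{4}))$ while the proposition as printed has $(y-\s^{2})^{4}$; yours is the correct exponent (the quadratic $y$-term of $I$ survives because $F$ contributes nothing quadratic in $y$ at $x=0$, consistently with the quadratic form $q$ in proposition~\ref{DevI*} and with the Gaussian test case), so the exponent $4$ in the statement is a misprint rather than an error on your side.
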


\noindent This is the starting point for the construction of an interaction term. Indeed, as we explained in the introduction, in order to \og kill the term $x^4$ \fg{}, it is enough to add some function $R$ to $F$ so that the term $\mu_4 x^4/(12\s^8)$ vanishes from the above expansion and so that
\[I(x,y)-(F+R)(x,y) \sim Ax^6+\frac{(y-\s^2)^4}{2(\mu_4-\s^4)}\,,\]
for some $A>0$. However we want to build a self-interaction, thus we have to estimate $\mu_4$ by $(x_1^4+\dots+x_n^4)/n$ (as we estimated $\s^2$ by $(x_1^2+\dots+x_n^2)/n$ in order to build our model in~\cite{CerfGorny}). Hence it seems natural to consider $H=F+R$, with
\[R: (x,y,z)\in \R\times \R\backslash\{0\}\times\R\longmapsto \frac{zx^{4}}{12y^{4}}\,,\]
and this leads us to study the rate function $I_{\point}$ of the large deviations for $\widetilde{\nu}_{\point n,\r}$, the law of $(S_{n}/n,T_{n}/n,U_{n}/n)$ under $\r^{\otimes n}$. 
\medskip

\noindent For $n\geq 1$ and $H=F+R$, let us consider $S_n=X_1^n+\dots+X_n^n$, where the law of $(X_1^n,\dots,X_n^n)$ has the density
\[(x_1,\dots,x_n)\longmapsto \frac{1}{Z_{H,n}}\exp\left(H\left(x_1+\cdots+x_n,\,x_1^2+\cdots+x_n^2,\,x_1^4+\cdots+x_n^4\right)\right)\]
with respect to $\r^{\otimes n}$. We made computer simulations of this model which support us in the choice of $H=F+R$. We used Metropolis-within-Gibbs algorithms (cf. section~4~of~\cite{RR}) and obtain :
\newpage

\includegraphics[height=6cm]{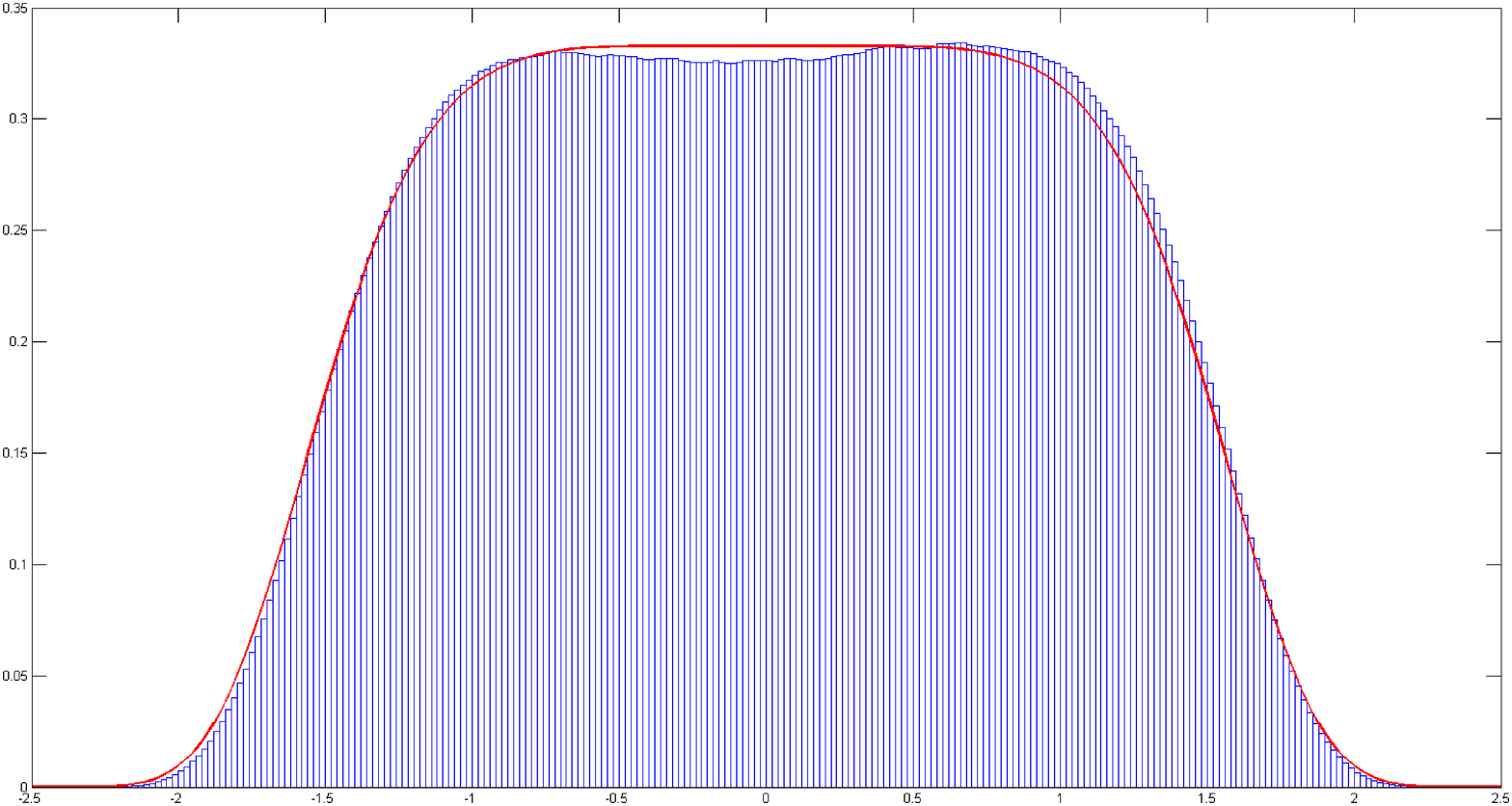}

\begin{center}
\textsc{In blue, the renormalized histogram of $6,17\times 10^{11}$ simulations of $S_n/n^{5/6}$, for $n=10000$ and $\r$ having a density proportional to $x\longmapsto\exp(-x^4)$. In red, the graph of the density function
\[x\longmapsto\left(\frac{81}{2}\right)^{1/6} \G\left(\frac{1}{6}\right)^{-1}\,\exp\left(-\frac{x^6}{18}\right)\,.\]}
\end{center}

\noindent We end this section by giving some notations. For a symmetric probability measure $\r$ on $\R$ which is not the Dirac mass at~$0$, we denote by $\nu_{\point \r}$ the law of $(Z,Z^{2},Z^{4})$ when $Z$ is a random variable with distribution $\r$. We define the Log-Laplace $\L_{\point }$ of $\nu_{\point \r}$ by
\[\forall(u,v,w) \in \R^{3} \qquad \L_{\point }(u,v,w)=\ln \int_{\R}e^{uz+vz^{2}+wz^{4}}\,d\r(z)\,.\]
If $\L_{\point }$ is finite in a neighbourhood of $(0,0,0)$ then the Cram\'er theorem (cf.~\cite{DZ}) states that $(\widetilde{\nu}_{\point n,\r})_{n \geq 1}$ satisfies the large deviations principle with speed $n$, governed by the Cram\'er transform $I$ of $\nu_{\point \r}$ defined by
\[\forall (x,y,z) \in \R^3 \qquad I_{\point }(x,y,z)=\sup_{(u,v,w)\in \R^{3}}\left(xu+yv+zw-\L_{\point }(u,v,w)\right)\,.\]
We denote by $D_{\L_{\point }}$ and $D_{I_{\point }}$ the domains of $\R^3$ where the functions $\L_{\point }$ et $I_{\point }$ are finite. We introduce next the subsets of $\R^3$
\[\Theta=\{\,(x,y,z) \in \R^3 : x^2\leq y,\, y^2\leq z\} \qquad\mbox{and} \qquad \Theta^{\!*}=\Theta \cap (\R\times \R\backslash\{0\}\times\R)\,.\]
By convexity, we have that $\widetilde{\nu}_{\point n,\r}(\Theta)=1$. We get that, under $\widetilde{\mu}_{\point n,\r}$, the distribution of $(S_{n}/n,T_{n}/n,U_{n}/n)$ is
\[\frac{1}{Z_{\point n}}\exp\left(H(nx,ny,nz)\right)\ind{\Theta^{\!*}}(x,y,z)\,d\widetilde{\nu}_{\point n,\r}(x,y,z)\,.\]

\includegraphics[height=6cm]{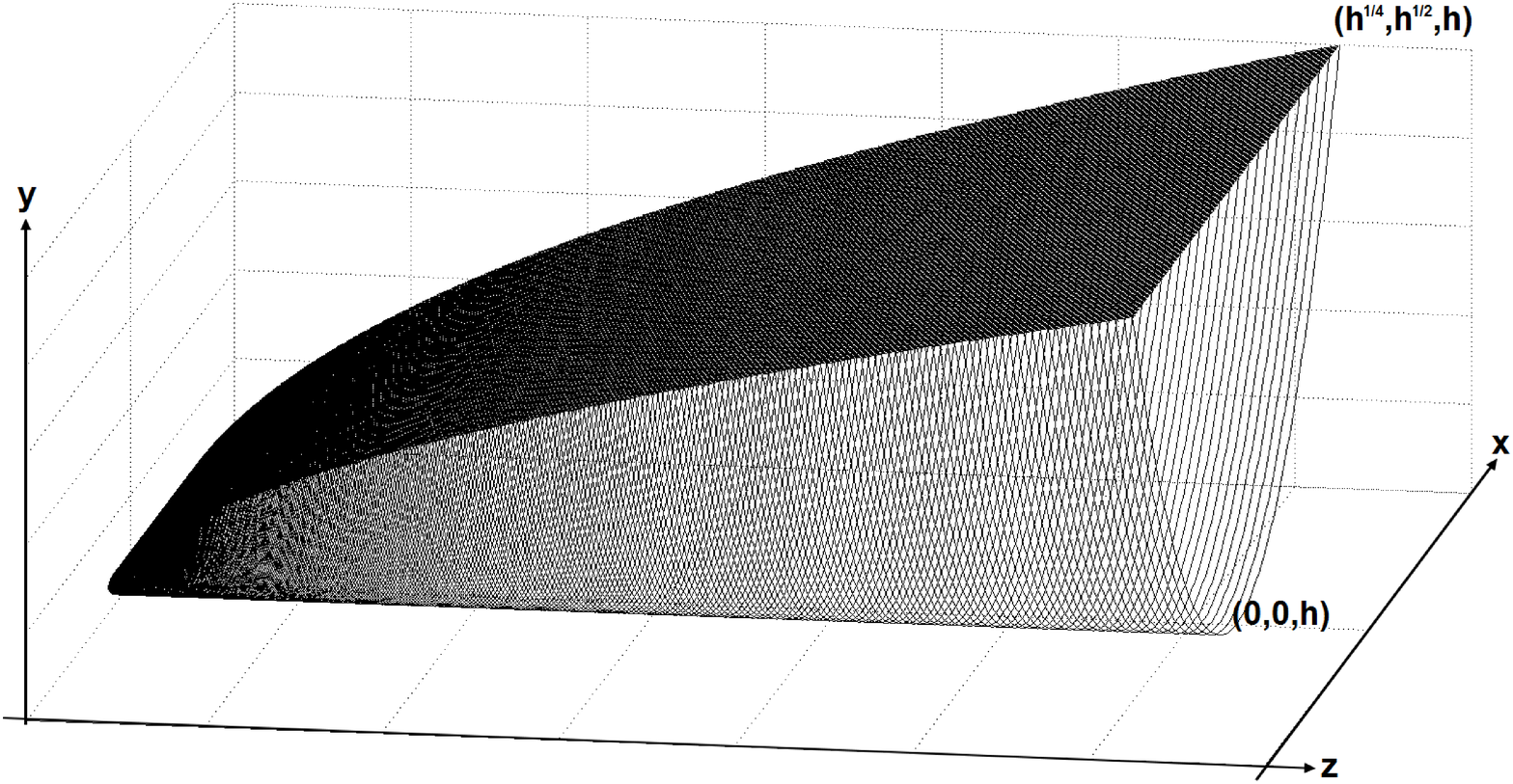}

\begin{center}
\textsc{Two views of the set of the points $(x,y,z)\in \partial \Theta$ such that $z\leq h$.}
\end{center}

\includegraphics[height=7cm]{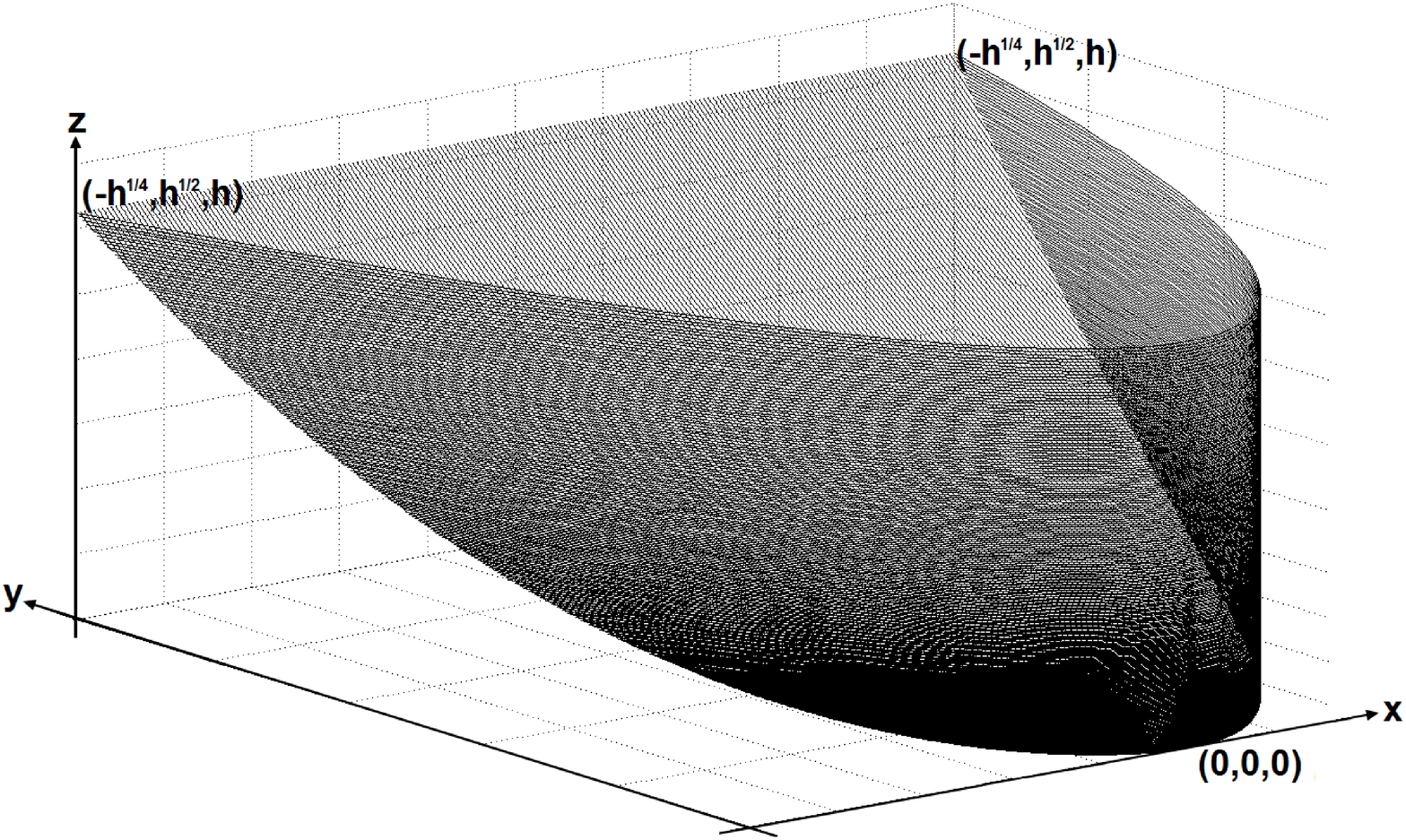}\medskip

\noindent We will proceed as we did in the article~\cite{CerfGorny}, i.e., we will study, for any $n \geq 1$, the function
\[G_n : (x,y,z) \longmapsto I_{\point }(x,y,z)-\frac{1}{n}H(nx,ny,nz)\,.\]
The Cram\'er transform $I_{\point }$ has a unique minimum at $(0,\s^2,\mu_4)$ and the method we used in the section~5.b)~of~\cite{CerfGorny} allows us to compute the expansion of $I_{\point }$ around its minimum.\medskip

\noindent In order to apply the Laplace's method, as in the section~7~of~\cite{CerfGorny}, we want to build $H$ so that $G_n$ also has a unique minimum at $(0,\s^2,\mu_4)$ for any $n\geq 1$, and so that its expansion around this minimum has the desired form :
\[Ax^6+q(y-\s^2,z-\mu_4)\,,\]
with $A>0$ and $q$ a positive definite quadratic form on $\R^2$.

\section{Expansion of $I_{\point}$ around $(0,\s^2,\mu_4)$}
\label{ExpansionI*}

\noindent Let $\r$ be a symmetric probability measure on $\R$ with variance $\s^2>0$ and such that $(0,0,0) \in \Dro_{\L_{\point }}$. In this section, we first study the smoothness of $I_{\point }$, then we compute its expansion around its minimum $(0,\s^2,\mu_4)$. In the last subsection we give the expansion of $I_{\point }-F-R$ around $(0,\s^2,\mu_4)$.

\subsection{Smoothness of $I_{\point }$}

\noindent The function $\L_{\point }$ is finite in a neighbourhood of $(0,0,0)$ thus each moment of $\r$ is finite and the covariance matrix of $\nu_{\point \r}$ is
\[\left(\begin{matrix}
\s^2 & 0 & 0  \\
0 & \mu_{4}-\s^4 & \mu_{6}-\s^2\mu_{4}\\
0 & \mu_{6}-\s^2\mu_{4} & \mu_{8}-\mu_{4}^{2}
\end{matrix}\right)\,.\]\smallskip

\begin{lem} We assume that $\r$ is a symmetric probability measure on $\R$ whose support contains at least five points. Then the support of $\nu_{\point \r}$ is not included in a hyperplane of~$\R^3$ ans thus
\[(\mu_4 - \s^4)(\mu_8-\mu_4^2)\neq(\mu_6-\s^2\mu_4)^2\,.\]
\label{Support5}
\end{lem}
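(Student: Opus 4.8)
The plan is to prove the contrapositive-style statement about the support directly, then read off the determinant inequality. Suppose the support of $\nu_{\point\r}$ is contained in a hyperplane of $\R^3$, i.e. there exists $(a,b,c,d)\neq(0,0,0,0)$ such that $az+bz^2+cz^4=d$ for every $z$ in the support of $\r$. This says that every point of $\mathrm{supp}(\r)$ is a root of the polynomial $P(z)=cz^4+bz^2+az-d$. Since $\r$ is symmetric, its support is symmetric about $0$; if $P$ is not the zero polynomial then it has at most four real roots, and by symmetry these roots come in pairs $\{z,-z\}$ together with possibly $0$, so the support has at most four points — contradicting the hypothesis that it contains at least five. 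It remains to rule out $P\equiv 0$, which forces $a=b=c=0$ and $d=0$, i.e. the trivial coefficient vector. Hence no such hyperplane exists.

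Next I would translate "support not contained in a hyperplane" into the non-degeneracy of the covariance matrix. The covariance matrix of $\nu_{\point\r}$ displayed above is the covariance matrix of the random vector $(Z,Z^2,Z^4)$; it is singular precisely when there is a non-zero vector $(\alpha,\beta,\gamma)$ with $\mathrm{Var}(\alpha Z+\beta Z^2+\gamma Z^4)=0$, i.e. when $\alpha Z+\beta Z^2+\gamma Z^4$ is $\r$-almost surely constant, which is exactly the statement that $\mathrm{supp}(\r)$ lies in an affine hyperplane of $\R^3$ under the map $z\mapsto(z,z^2,z^4)$ — equivalently that $\mathrm{supp}(\nu_{\point\r})$ lies in a hyperplane of $\R^3$. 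By the previous paragraph this does not happen, so the covariance matrix is invertible, in particular its determinant is non-zero.

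Finally, by symmetry of $\r$ the covariance matrix is block-diagonal with a $1\times 1$ block $\s^2>0$ and a $2\times 2$ block
\[\begin{pmatrix} \mu_4-\s^4 & \mu_6-\s^2\mu_4\\ \mu_6-\s^2\mu_4 & \mu_8-\mu_4^2\end{pmatrix}\,,\]
so its determinant is $\s^2\big[(\mu_4-\s^4)(\mu_8-\mu_4^2)-(\mu_6-\s^2\mu_4)^2\big]$. Since this is non-zero and $\s^2>0$, we conclude $(\mu_4-\s^4)(\mu_8-\mu_4^2)\neq(\mu_6-\s^2\mu_4)^2$, as claimed.

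The only delicate point is the degenerate case $P\equiv 0$ in the first paragraph — one must check that the hyperplane normal vector $(a,b,c,d)$ cannot be such that $P$ vanishes identically as a polynomial while being a non-trivial vector; but since $1,z,z^2,z^4$ are linearly independent as functions, $P\equiv 0$ forces all coefficients to vanish, so this case is vacuous. Everything else is bookkeeping: the count of real roots of a degree-$\le 4$ polynomial, the symmetry of the root set, and the block structure of the covariance matrix.
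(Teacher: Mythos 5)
Your proof is correct, and its geometric half takes a genuinely different (and arguably cleaner) route than the paper's. The paper picks four support points $\pm a,\pm b$, writes down the explicit plane $-(a^2+b^2)y+z+a^2b^2=0$ containing their images under $z\mapsto(z,z^2,z^4)$, and checks that the image of a fifth support point $c$ misses it because $(c^2-a^2)(c^2-b^2)\neq 0$; implicitly this uses that those four images affinely span that plane, so it is the only candidate hyperplane. You instead observe that any hyperplane containing $\mathrm{supp}(\nu_{\point \r})$ forces every point of $\mathrm{supp}(\r)$ to be a root of a not-identically-zero polynomial of degree at most $4$, which caps the support at four points; this handles the quantifier over all hyperplanes directly and avoids exhibiting any particular plane. (Your aside that the roots ``come in pairs by symmetry'' is unnecessary and slightly off --- the roots of $P$ need not be symmetric, only the support is --- but the bound of four roots already yields the contradiction, so nothing is lost.) For the second half the two arguments coincide in substance: the paper cites a reference for ``support not contained in a hyperplane implies invertible covariance matrix,'' while you spell out the equivalence via the vanishing of $\mathrm{Var}(\alpha Z+\beta Z^2+\gamma Z^4)$ and then exploit the block-diagonal structure (the entries $\mu_3$ and $\mu_5$ vanish by symmetry) together with $\s^2>0$ to isolate the $2\times 2$ determinant $(\mu_4-\s^4)(\mu_8-\mu_4^2)-(\mu_6-\s^2\mu_4)^2$. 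Both steps are sound.
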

\vspace*{-0.3cm}

\begin{proof} Since $\r$ is symmetric, its support contains the points $a,-a,b$ and $-b$ for some $a\neq b$. Therefore the support of $\nu_{\point \r}$ contains the points 
\[(a,a^2,a^4),\quad (-a,a^2,a^4),\quad (b,b^2,b^4)\quad\mbox{and}\quad (-b,b^2,b^4)\,.\]
We observe that these four points belong to the same plane $\Pc$ whose equation~is
\[-(a^2+b^2)y+z+a^2b^2=0\,.\]
If $c$ is a fifth point in the support of $\r$ then
\[-(a^2+b^2)c^2+c^4+a^2b^2=(c^2-a^2)(c^2-b^2)\neq 0\,.\]
Thus the point $(c,c^2,c^4)$, which is in the support of $\nu_{\point \r}$, is not included in $\Pc$. Hence the support of $\nu_{\point \r}$ is not included in a hyperplane of~$\R^3$. As a consequence the covariance matrix of $\nu_{\point \r}$ is invertible (see section III.5 of [14] for a proof), i.e., $(\mu_4 - \s^4)(\mu_8-\mu_4^2)\neq(\mu_6-\s^2\mu_4)^2$.
\end{proof}\medskip

\noindent We assume next that the support of $\r$ contains at least five points. The previous lemma and the proposition~A.4\footnote{Actually it is proposition 10 of the ARXIV version of~\cite{CerfGorny}.}~of~\cite{CerfGorny} imply that $\nabla \L_{\point }$ is a $\Ck{\infty}$-diffeomorphism from $\Dro_{\L_{\point }}$ to $A_{I_{\point }}$, the admissible domain of $I_{\point }$. Moreover $A_{I_{\point }} \subset \Theta^{\!*}$ and
\[(0,\s^{2},\mu_{4})=\nabla \L_{\point }(0,0,0) \in \nabla \L_{\point }(\Dro_{\L_{\point }})=A_{I_{\point }}\,.\]
The function $I_{\point }$ is $\Ck{\infty}$ on $A_{I_{\point }}$ and, if $(x,y,z)\longmapsto (u(x,y,z),v(x,y,z),w(x,y,z))$ denotes the inverse function of $\nabla \L_{\point }$, then, for any $(x,y,z) \in A_{I_{\point }}$,
\begin{multline*}
I_{\point }(x,y,z)=xu(x,y,z)+yv(x,y,z)+zw(x,y,z)\\-\L_{\point }(u(x,y,z),v(x,y,z),w(x,y,z))\,,
\end{multline*}
\[\nabla I_{\point }(x,y,z)=(\nabla \L_{\point })^{-1}(x,y,z) = (u(x,y,z),v(x,y,z),w(x,y,z))\,,\]
\[\mathrm{D}^{2}_{(x,y,z)}I_{\point }=\left(\mathrm{D}^{2}_{(u(x,y,z),v(x,y,z),w(x,y,z))}\L\right)^{-1}\,.\]

\noindent In order to compute the derivatives of the previous terms, as in section~5.b)~of~\cite{CerfGorny}, we introduce the functions $f_j$ defined by
\[\forall j\in \N \quad \forall (u,v,w) \in \Dro_{\L_{\point }} \qquad f_{j}(u,v,w)=\frac{\displaystyle{\int_{\R}x^{j}e^{ux+vx^{2}+wx^{4}}\,d\r(x)}}{\displaystyle{\int_{\R}e^{ux+vx^{2}+wx^{4}}\,d\r(x)}}\,.\]
The functions $f_{j}$, $j \in \N$, are $\Ck{\infty}$ on $\Dro_{\L_{\point }}$ and they verify the following properties:\\
$\star$ $f_{0}$ is the identity function on $\R^{3}$ and
\[f_{1}=\frac{\partial \L}{\partial u}\,, \qquad f_{2}=\frac{\partial \L}{\partial v}  \qquad \mbox{and} \qquad f_{4}=\frac{\partial \L}{\partial w}\,.\]
$\star$ For all $j \in \N$, $f_{j}(0,0,0)=\mu_{j}$ is the $j$-th moment of $\r$. It is null if $j$ is odd, since $\r$ is symmetric. Moreover, for any $j\in \N$,
\[\frac{\partial f_{j}}{\partial u}=f_{j+1}-f_{j}f_{1}\,,\qquad \frac{\partial f_{j}}{\partial v}=f_{j+2}-f_{j}f_{2} \qquad \mbox{and} \qquad \frac{\partial f_{j}}{\partial w}=f_{j+4}-f_{j}f_{4}\,.\]

\noindent For any $(u,v,w) \in \Dro_{\L_{\point }}$, we have
\[\mathrm{D}^{2}_{(u,v,w)}\L_{\point }=\left(\begin{matrix}
f_{2}-f_{1}^{2} & f_{3}-f_{1}f_{2} & f_{5}-f_{4}f_{1}  \\
f_{3}-f_{1}f_{2} & f_{4}-f_{2}^{2} & f_{6}-f_{4}f_{2}\\
f_{5}-f_{4}f_{1} & f_{6}-f_{4}f_{2} & f_{8}-f_{4}^{2}
\end{matrix}\right)(u,v,w)\,.\]
We define
\begin{multline*}
g=(f_{2}-f_{1}^{2})(f_{4}-f_{2}^{2})(f_{8}-f_{4}^{2})+2(f_{3}-f_{1}f_{2})(f_{6}-f_{4}f_{2})(f_{5}-f_{4}f_{1})\\
-(f_{4}-f_{2}^{2})(f_{5}-f_{4}f_{1})^{2}-(f_{2}-f_{1}^{2})(f_{6}-f_{4}f_{2})^{2}-(f_{8}-f_{4}^{2})(f_{3}-f_{1}f_{2})^{2}\,.
\end{multline*}
This is a function which is positive on $\Dro_{\L_{\point }}$. Therefore
\[\forall (x,y,z) \in A_{I_{\point }} \qquad \mathrm{D}^{2}_{(x,y,z)}I_{\point }=K(u(x,y,z),v(x,y,z),w(x,y,z))\,,\]
where $K$ is a function from $\R^{3}$ to $\Sc_{3}(\R)$, the set of the symmetric matrices of size $3$, such that 
\[K_{1,1}=\frac{(f_{4}-f_{2}^{2})(f_{8}-f_{4}^{2})-(f_{6}-f_{4}f_{2})^{2}}{g}\,,\]
\[K_{2,2}=\frac{(f_{2}-f_{1}^{2})(f_{8}-f_{4}^{2})-(f_{5}-f_{4}f_{1})^{2}}{g}\,,\]
\[K_{3,3}=\frac{(f_{2}-f_{1}^{2})(f_{4}-f_{2}^{2})-(f_{3}-f_{1}f_{2})^{2}}{g}\,,\]
\[K_{1,2}=K_{2,1}=\frac{(f_{5}-f_{4}f_{1})(f_{6}-f_{4}f_{2})-(f_{3}-f_{1}f_{2})(f_{8}-f_{4}^{2})}{g}\,,\]
\[K_{1,3}=K_{3,1}=\frac{(f_{3}-f_{1}f_{2})(f_{6}-f_{4}f_{2})-(f_{5}-f_{4}f_{1})(f_{4}-f_{2}^{2})}{g}\,,\]
\[K_{2,3}=K_{3,2}=\frac{(f_{3}-f_{1}f_{2})(f_{5}-f_{4}f_{1})-(f_{2}-f_{1}^{2})(f_{6}-f_{4}f_{2})}{g}\,.\]

\subsection{Computation of the terms of the expansion of $I_{\point }$}
\label{ExpansionI*sub}

\noindent Notice that $g(0,0,0)=a\s^{2}$ with
\[a=(\mu_4 - \s^4)(\mu_8-\mu_4^2)-(\mu_6-\s^2\mu_4)^2>0\,.\]
We have
\[\mathrm{D}^{2}_{(0,\s^{2},\mu_{4})}I_{\point }=\left(\begin{matrix}
1/\s^{2} & 0 & 0  \\
0 & (\mu_{8}-\mu_{4}^{2})/a & (\mu_{4}\s^{2}-\mu_{6})/a\\
0 & (\mu_{4}\s^{2}-\mu_{6})/a & (\mu_{4}-\s^{4})/a
\end{matrix}\right)\,.\]
Let $q$ be the positive definite quadratic form on $\R^2$ given by
\[\forall (y,z) \in \R^2 \qquad q(y,z)=\frac{\mu_{8}-\mu_{4}^{2}}{2a}y^{2}+\frac{\mu_{4}\s^{2}-\mu_{6}}{a}yz+\frac{\mu_{4}-\s^{4}}{2a}z^{2}\,.\]

\noindent Taylor formula implies that, at the order $6$, the expansion of $I_{\point }$ in the neighbourhood of $(0,\s^{2},\mu_{4})$ is \begin{multline*}
I_{\point }(x,y,z)=\frac{x^{2}}{2\s^{2}}+q(y-\s^{2},z-\mu_{4})\\
+\sum_{(\a,\b,\g) \in \Tc} \frac{1}{\a!\b!\g!}\,\frac{\partial^{\a+\b+\g} I_{\point }}{\partial x^{\a}\partial y^{\b}\partial z^{\g}}(0,\s^{2},\mu_{4})\,x^{\a}(y-\s^2)^{\b}(z-\mu_4)^{\g}\\
+o(\|x,y-\s^{2},z-\mu_{4}\|^{6})\,,
\end{multline*}
with
\[\Tc=\{\,(\a,\b,\g)\in \N^3 : \a+\b+\g \in \{3,4,5,6\} \}\,.\]

\noindent Thus we have to compute the terms
\[\frac{\partial^{\a+\b+\g} I_{\point }}{\partial x^{\a}\partial y^{\b}\partial z^{\g}}(0,\s^{2},\mu_{4})\]
for $(\a,\b,\g) \in \Tc$. In order to optimize the computations, we will first determine the terms of the expansion of $I_{\point }$ which are negligible compared to the term $Ax^{6}+q(y-\s^{2},z-\mu_{4})$ with $A>0$.

\subsubsection{The non-negligible terms}

\begin{lem} Let $A>0$ and $q$ be a positive definite quadratic form on $\R^2$. Then, in a neighboorhood of $(0,0,0)$,
\[\|x,y,z\|^{6}=O(Ax^{6}+q(y,z))\,.\]
Moreover, for any $(\a, \b, \g) \in \N^3$, we have
\[\frac{\a}{3}+\b+\g>2 \Longrightarrow \lim_{(x,y,z)\to(0,0,0)}\frac{x^{\a}y^{\b}z^{\g}}{Ax^{6}+q(y,z)}=0\,.\]
\label{negligeable}
\end{lem}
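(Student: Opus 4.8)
The plan is to prove the two assertions of Lemma~\ref{negligeable} separately, both by elementary estimates near the origin.

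\textbf{First assertion.} Since $q$ is a positive definite quadratic form on $\R^2$, there is a constant $c>0$ with $q(y,z)\geq c(y^2+z^2)$ for all $(y,z)\in\R^2$. Hence
\[Ax^6+q(y,z)\geq Ax^6+c(y^2+z^2)\,.\]
On a bounded neighbourhood of $(0,0,0)$, say $\|x,y,z\|\leq 1$, we have $x^6\leq x^2$, so it suffices to observe that $\min(A,c)(x^2+y^2+z^2)$ dominates $\|x,y,z\|^6=(x^2+y^2+z^2)^3$ there, which is clear since $(x^2+y^2+z^2)^2\leq 1$. Thus $\|x,y,z\|^6=O(Ax^6+q(y,z))$ in a neighbourhood of the origin. (If one prefers to keep a single norm, note all norms on $\R^3$ are equivalent, so the particular choice of $\|\cdot\|$ is irrelevant.)

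\textbf{Second assertion.} Assume $\frac{\a}{3}+\b+\g>2$. Using again $q(y,z)\geq c(y^2+z^2)$ and keeping the term $Ax^6$, and using the fact that for nonnegative reals $a_1,a_2,a_3$ one has $a_1+a_2+a_3\geq 3(a_1a_2a_3)^{1/3}$ (AM--GM), I would write, with weights chosen to match the exponents,
\[Ax^6+q(y,z)\;\geq\;A x^6 + \tfrac{c}{2}y^2+\tfrac{c}{2}z^2\;\geq\;C\,(x^6)^{\a/6}\,(y^2)^{\b/2}\,(z^2)^{\g/2}\cdot\big(x^6\big)^{\frac16(6-\a-\ldots)}\ldots\]
— more cleanly: the idea is that the monomial $x^\a y^\b z^\g$, once we rescale $x\to x^{1/3}$ in the bookkeeping, has ``homogeneity degree'' $\tfrac{\a}{3}+\b+\g$ with respect to the anisotropic scaling $(x,y,z)\mapsto(t^3x,t y,t z)$ under which $Ax^6+q(y,z)$ is exactly homogeneous of degree $2$. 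So I would substitute $x=t^3\xi$, $y=t\eta$, $z=t\zeta$ with $\xi^2+\eta^2+\zeta^2=1$, giving
\[\frac{x^\a y^\b z^\g}{Ax^6+q(y,z)}=\frac{t^{3\a+\b+\g}\,\xi^\a\eta^\b\zeta^\g}{t^{6}\big(A\xi^6+q(\eta,\zeta)\big)}=t^{\,3(\frac{\a}{3}+\b+\g-2)}\cdot\frac{\xi^\a\eta^\b\zeta^\g}{A\xi^6+q(\eta,\zeta)}\,.\]
The exponent of $t$ is positive by hypothesis. It remains to check that the last fraction is bounded uniformly over the unit sphere $\{\xi^2+\eta^2+\zeta^2=1\}$. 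The only possible blow-up is where the denominator vanishes, i.e.\ $\xi=0$ and $q(\eta,\zeta)=0$, hence $(\eta,\zeta)=(0,0)$ since $q$ is positive definite — but that point is not on the unit sphere. By continuity and compactness of the sphere, the denominator is bounded below by a positive constant and the numerator is bounded, so the fraction is bounded; letting $t\to 0$ gives the limit $0$. (Since one cares only about a neighbourhood of the origin, it is enough to let $t\to0$; as $(x,y,z)\to(0,0,0)$ with $(x,y,z)$ written in these coordinates, $t\to0$.)

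\textbf{Main obstacle.} There is no real obstacle here — the content is the right choice of anisotropic scaling $(x,y,z)\mapsto(t^3 x, ty, tz)$, which is dictated by the target $Ax^6+q(y,z)$, together with the positive-definiteness of $q$ to rule out denominator blow-up on the unit sphere. The only mild care needed is to handle the mixed $yz$ term of $q$ (absorbed in $q(y,z)\geq c(y^2+z^2)$) and to phrase the compactness argument so that the constant in the $O$-estimate and the boundedness of the fraction are genuinely uniform. I would present the second assertion via this scaling/compactness argument rather than a hands-on AM--GM juggling of exponents, as it is cleaner and immediately uniform.
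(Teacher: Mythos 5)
Your overall strategy for the second assertion is the same as the paper's: introduce anisotropic polar-type coordinates in which $x^{3}$ (rather than $x$) is treated as a variable of degree one, reduce to the compact unit sphere, and use positive-definiteness of $q$ together with compactness to bound the denominator below. However, the central displayed computation is wrong as written. Under your substitution $x=t^{3}\xi$, $y=t\eta$, $z=t\zeta$ one has $Ax^{6}+q(y,z)=At^{18}\xi^{6}+t^{2}q(\eta,\zeta)$, which is \emph{not} $t^{6}\bigl(A\xi^{6}+q(\eta,\zeta)\bigr)$ and is not homogeneous in $t$ at all; moreover $x^{\a}y^{\b}z^{\g}=t^{3\a+\b+\g}\xi^{\a}\eta^{\b}\zeta^{\g}$, and $3\a+\b+\g-6$ is not $3(\tfrac{\a}{3}+\b+\g-2)$, so even the exponent you extract is inconsistent with your own substitution. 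The scaling that realizes the degrees you correctly announce in words is $x=t^{1/3}\xi$, $y=t\eta$, $z=t\zeta$ — equivalently, set $\xi=x^{3}/t$, $\eta=y/t$, $\zeta=z/t$ with $\xi^{2}+\eta^{2}+\zeta^{2}=1$, which is exactly the paper's parametrization $x^{3}=r\sin\phi$, $y=r\cos\theta\cos\phi$, $z=r\sin\theta\cos\phi$. With that choice the denominator becomes $t^{2}\bigl(A\xi^{2}+q(\eta,\zeta)\bigr)\geq t^{2}\min(A,m)$, the numerator is $t^{\a/3+\b+\g}\,\xi^{\a/3}\eta^{\b}\zeta^{\g}$ with bounded sphere factor, and the ratio is $O\bigl(t^{\a/3+\b+\g-2}\bigr)$, which is the paper's argument verbatim.

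The first assertion also has a logical gap: you claim it suffices that $\min(A,c)(x^{2}+y^{2}+z^{2})$ dominates $(x^{2}+y^{2}+z^{2})^{3}$, but the inequality $Ax^{6}+c(y^{2}+z^{2})\geq\min(A,c)(x^{2}+y^{2}+z^{2})$ that would connect this to the actual denominator is false near the origin (take $y=z=0$: $Ax^{6}\geq \min(A,c)\,x^{2}$ fails as $x\to0$, and your remark $x^{6}\leq x^{2}$ points in the wrong direction). A correct elementary route is $(x^{2}+y^{2}+z^{2})^{3}\leq 4\bigl(x^{6}+(y^{2}+z^{2})^{3}\bigr)\leq 4\bigl(x^{6}+y^{2}+z^{2}\bigr)$ on the unit ball, which is $O\bigl(Ax^{6}+q(y,z)\bigr)$; alternatively the corrected scaling argument yields it directly, as in the paper. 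Both repairs are routine and the underlying idea is the right one, but the proof cannot stand as written.
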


\begin{proof} For any $(x,y,z) \in \R^{3}\backslash \{(0,0,0)\}$, there exists a unique $(r, \theta, \phi)$ which belongs to $]0,+\infty[\times [0,2\pi[\times [0,\pi]$ and satisfies
\[\left\{\begin{array}{l}
x^{3}=r\,\mathrm{sin}\phi\,, \\
y=r\,\mathrm{cos}\theta\,\mathrm{cos}\phi\,,\\
z=r\,\mathrm{sin}\theta\,\mathrm{cos}\phi\,.
\end{array}\right.\]
Thus
\[Ax^{6}+q(y,z)=Ar^{2}\,\mathrm{sin}^{2}\phi+r^{2}\,\mathrm{cos}^{2}\phi\,q(\mathrm{cos}\theta,\mathrm{sin}\theta)\,.\]
However the set $\{\,(\mathrm{cos} \theta,\mathrm{sin} \theta) : \theta \in [0,2\pi[\,\}$ is compact in $\R^{2}$ and the continuous function $q$ is positive on this set. As a consequence $q$ has a minimum $m>0$ and a maximum $M>m$. Hence
\[\min(A,m)\,r^{2} \leq Ax^{6}+q(y,z) \leq \max(A,m)\,r^{2}\,.\]
We get that
\[\frac{\|x,y,z\|^{6}}{Ax^{6}+q(y,z)}=\frac{\left(x^2+y^2+z^2\right)^{3}}{Ax^{6}+q(y,z)}\leq\frac{\left(r^{2/3}\mathrm{sin}^{2/3}\phi  +r^2\mathrm{cos}^2\phi\right)^{3}}{\min(A,m)\,r^{2}}=\frac{\left(1+r^{4/3}\right)^{3}}{\min(A,m)}\,.\]
This is a bounded quantity when $r$ tends to $0$. Next
\[\left|\frac{x^{\a}y^{\b}z^{\g}}{Ax^{6}+q(y,z)}\right|\leq \frac{|r\,\mathrm{sin}\phi|^{\a/3} |r\,\mathrm{cos}\theta\,\mathrm{cos}\phi|^{\b} |r\,\mathrm{sin}\theta\,\mathrm{cos}\phi|^{\g}}{\min(A,m)\,r^{2}}=O(r^{\a/3+\b+\g-2})\,.\]
Since the convergence of $(x,y,z)$ to $(0,0,0)$ is equivalent to the convergence of $r$ to $0$, the lemma is proved.
\end{proof} \medskip

\noindent This lemma states that the terms $x^{\a}y^{\b}z^{\g}$, $(\a, \b, \g) \in \Tc$, which are not negligible at $(0,\s^{2},\mu_{4})$ compared to $Ax^{6}+q(y-\s^{2},z-\mu_{4})$, are such that
\[\frac{\a}{3}+\b+\g\leq~2\,.\]
Thus, these terms are those for which $(\a, \b, \g)$ is $(2,1,0)$, $(2,0,1)$, $(3,0,0)$, $(3,1,0)$, $(3,0,1)$, $(4,0,0)$, $(5,0,0)$ or $(6,0,0)$. Let us compute the coefficients of these terms in the expansion of $I_{\point }$. We denote
\[\begin{array}{c}
k_{1}=(f_{4}-f_{2}^{2})(f_{8}-f_{4}^{2})-(f_{6}-f_{4}f_{2})^{2}\,,\\[0.2cm]
k_{2}=(f_{5}-f_{4}f_{1})(f_{6}-f_{4}f_{2})-(f_{3}-f_{1}f_{2})(f_{8}-f_{4}^{2})\,,\\[0.2cm]
k_{3}=(f_{3}-f_{1}f_{2})(f_{6}-f_{4}f_{2})-(f_{5}-f_{4}f_{1})(f_{4}-f_{2}^{2})\,.
\end{array}\]

\subsubsection{The terms at the third order}

\noindent Let us start with the terms at third order which might be non-negligible compared to $Ax^{6}+q(y-\s^{2},z-\mu_{4})$ :
\begin{align*}
\frac{\partial^{3}I_{\point }}{\partial x^{2} \partial y}&=\frac{\partial}{\partial x}\left(\frac{\partial^{2}I_{\point }}{\partial x \partial y}\right)=\frac{\partial K_{1,2}(u,v,w)}{\partial x}\\
&=\frac{\partial u}{\partial x}\times \frac{\partial K_{1,2}}{\partial u}(u,v,w)+\frac{\partial v}{\partial x}\times \frac{\partial K_{1,2}}{\partial v}(u,v,w)+\frac{\partial w}{\partial x}\times \frac{\partial K_{1,2}}{\partial w}(u,v,w)\,.
\end{align*}
We have
\[\frac{\partial v}{\partial x}(0,\s^{2},\mu_{4})=\frac{\partial^{2} I_{\point }}{\partial x\partial y}(0,\s^{2},\mu_{4})=0=\frac{\partial^{2} I_{\point }}{\partial x\partial z}(0,\s^{2},\mu_{4})=\frac{\partial w}{\partial x}(0,\s^{2},\mu_{4})\,,\]
\[\frac{\partial u}{\partial x}(0,\s^{2},\mu_{4})=\frac{\partial^{2} I_{\point }}{\partial x^{2}}(0,\s^{2},\mu_{4})=\frac{1}{\s^{2}}\,,\]
thus
\[\frac{\partial^{3}I_{\point }}{\partial x^{2} \partial y}(0,\s^{2},\mu_{4})=\frac{1}{\s^{2}}\left( \frac{1}{g(0,0,0)}\frac{\partial k_{2}}{\partial u}(0,0,0)-\frac{k_{2}(0,0,0)}{g^{2}(0,0,0)}\frac{\partial g}{\partial u}(0,0,0)  \right)\,.\]
We have $k_{2}(0,0,0)=k_{3}(0,0,0)=0$ and $g(0,0,0)=\s^{2}k_{1}(0,0,0)$ with
\[k_{1}(0,0,0)=\mu_{8}\mu_{4}-\mu_{4}^{3}-\mu_{8}\s^{4}-\mu_{6}^{2}+2\mu_{6}\mu_{4}\s^{2}\,.\]
Using the properties of the functions $f_{i}$, $i \in \N$, for computing their partial derivatives, we get
\[\frac{\partial k_{2}}{\partial u}(0,0,0)=-k_{1}(0,0,0)\,.\]
Hence
\[\frac{\partial^{3}I_{\point }}{\partial x^{2} \partial y}(0,\s^{2},\mu_{4})=\frac{-k_{1}(0,0,0)}{\s^{4}k_{1}(0,0,0)}=-\frac{1}{\s^{4}}\,.\]
We compute next that
\[\frac{\partial k_{1}}{\partial u}(0,0,0)=\frac{\partial k_{3}}{\partial u}(0,0,0)=0\,.\]
This implies that
\[\frac{\partial^{3}I_{\point }}{\partial x^{3}}(0,\s^{2},\mu_{4})=\frac{\partial^{3}I_{\point }}{\partial x^{2} \partial z}(0,\s^{2},\mu_{4})=0\,.\]
But we already knew that the third partial derivative of $I_{\point }$ with respect to $x$ is null at $(0,\s^2,\mu_4)$ since $I_{\point }$ is even in its first variable. \medskip

\noindent We have shown that
\[\frac{1}{\a!\b!\g!}\,\frac{\partial^{\a+\b+\g} I_{\point }}{\partial x^{\a}\partial y^{\b}\partial z^{\g}}(0,\s^{2},\mu_{4})=\left\{\begin{array}{cl}
\displaystyle{-\frac{1}{2\s^{4}}} & \quad \mbox{if} \quad (\a,\b,\g)=(2,1,0)\,,\\[0.2cm]
\displaystyle{0}  & \quad \mbox{if} \quad (\a,\b,\g)=(2,0,1)\,,\\[0.2cm]
\displaystyle{0}  & \quad \mbox{if} \quad (\a,\b,\g)=(3,0,0)\,.
\end{array}\right.\]

\subsubsection{The terms at the fourth order}

\noindent Let us focus now on the non-negligible terms at the fourth order :
\begin{multline*}
\frac{\partial^{4}I_{\point }}{\partial x^{4}}=\frac{\partial^{2}K_{1,1}(u,v,w)}{\partial x^{2}}=\frac{\partial^{2} u}{\partial x^{2}} \frac{\partial K_{1,1}}{\partial u}(u,v,w)+\frac{\partial^{2} v}{\partial x^{2}} \frac{\partial K_{1,1}}{\partial v}(u,v,w)\\
+\frac{\partial^{2} w}{\partial x^{2}} \frac{\partial K_{1,1}}{\partial w}(u,v,w)+\left(\frac{\partial u}{\partial x}\right)^{2} \frac{\partial^{2} K_{1,1}}{\partial u^{2}}(u,v,w)+\left(\frac{\partial v}{\partial x}\right)^{2} \frac{\partial^{2} K_{1,1}}{\partial v^{2}}(u,v,w)\\
+\left(\frac{\partial w}{\partial x}\right)^{2} \frac{\partial^{2} K_{1,1}}{\partial w^{2}}(u,v,w)+2\frac{\partial u}{\partial x}\frac{\partial v}{\partial x}\frac{\partial^{2} K_{1,1}}{\partial u \partial v}(u,v,w)\\
+2\frac{\partial u}{\partial x}\frac{\partial w}{\partial x}\frac{\partial^{2} K_{1,1}}{\partial u \partial w}(u,v,w)+2\frac{\partial v}{\partial x}\frac{\partial w}{\partial x}\frac{\partial^{2} K_{1,1}}{\partial v \partial w}(u,v,w)\,.
\end{multline*}
We have
\[\frac{\partial^{2} u}{\partial x^{2}}(0,\s^{2},\mu_{4})=\frac{\partial^{3} I_{\point }}{\partial x^{3}}(0,\s^{2},\mu_{4})=0=\frac{\partial^{3} I_{\point }}{\partial x^{2}\partial z}(0,\s^{2},\mu_{4})=\frac{\partial^{2} w}{\partial x^{2}}(0,\s^{2},\mu_{4})\,,\]
\[\frac{\partial^{2} v}{\partial x^{2}}(0,\s^{2},\mu_{4})=\frac{\partial^{3} I_{\point }}{\partial x^{2}\partial y}(0,\s^{2},\mu_{4})=-\frac{1}{\s^{4}}\,.\]
As a consequence
\[\frac{\partial^{4}I_{\point }}{\partial x^{4}}(0,\s^{2},\mu_{4})=\frac{1}{\s^{4}}\frac{\partial^{2} K_{1,1}}{\partial u^{2}}(0,0,0)-\frac{1}{\s^{4}}\frac{\partial K_{1,1}}{\partial v}(0,0,0)\,.\]
We have
\[\frac{\partial^{2} K_{1,1}}{\partial u^{2}}=\frac{1}{g} \frac{\partial^{2}k_{1}}{\partial u^{2}}-\frac{2}{g^{2}}\frac{\partial g }{\partial u}\,\frac{\partial k_{1}}{\partial u}-\frac{k_{1}}{g^{2}}\frac{\partial^{2} g }{\partial u^{2}}+\frac{2k_{1}}{g^{3}}\left(\frac{\partial g }{\partial u}\right)^{2}\,,\]
and the properties of the functions $f_{i}$, $i \in \N$, for computing their partial derivatives give us
\[\frac{\partial g}{\partial u}(0,0,0)=0\,,\]
so that
\[\frac{\partial^{2} K_{1,1}}{\partial u^{2}}(0,0,0)=\frac{1}{\s^{4}k_{1}(0,0,0)}\left(\s^{2} \frac{\partial^{2}k_{1}}{\partial u^{2}}(0,0,0)-\frac{\partial^{2} g }{\partial u^{2}}(0,0,0)\right)\,.\]
Moreover
\begin{align*}
\frac{\partial K_{1,1}}{\partial v}(0,0,0)&=\frac{1}{g(0,0,0)}\frac{\partial k_{1}}{\partial v}(0,0,0)-\frac{k_{1}(0,0,0)}{g^{2}(0,0,0)}\frac{\partial g}{\partial v}(0,0,0)\\
&=\frac{1}{\s^{4}k_{1}(0,0,0)}\left(\s^{2}\frac{\partial k_{1}}{\partial v}(0,0,0)-\frac{\partial g}{\partial v}(0,0,0)\right)\,.
\end{align*}
We compute that
\begin{multline*}
\frac{\partial k_{1}}{\partial v}(0,0,0)=\frac{\partial^{2} k_{1}}{\partial u^{2}}(0,0,0)=-\mu_{8}\mu_{6}+\mu_{10}\mu_{4}-2\mu_{8}\mu_{4}\s^{2}-\mu_{10}\s^{4}+3\mu_{8}\s^{6}\\-\mu_{6}\mu_{4}^{2} +3\mu_{4}^{3}\s^{2}+4\mu_{6}^{2}\s^{2}-6\mu_{6}\mu_{4}\s^{4}\,.
\end{multline*}
After factorising by $k_1(0,0,0)$, we get that this quantity is equal to 
\[-3\s^{2}k_{1}(0,0,0)+\mu_{10}(\mu_{4}-\s^{4})+\mu_{8}(\mu_{4}\s^{2}-\mu_{6})+\mu_{6}(\mu_{6}\s^{2}-\mu_{4}^{2})\,.\]
We compute similarly that
\begin{equation}
\frac{\partial^{2} g }{\partial u^{2}}(0,0,0)=-(\mu_{4}+4\s^{4})k_{1}(0,0,0)+\s^{2}\eta\label{Dguu},
\end{equation}
\begin{equation}
\frac{\partial g }{\partial v}(0,0,0)=(\mu_{4}-4\s^{4})k_{1}(0,0,0)+\s^{2}\eta\label{Dgv}.
\end{equation}
where $\eta=\mu_{10}(\mu_{4}-\s^{4})+\mu_{8}(\mu_{4}\s^{2}-\mu_{6})+\mu_{6}(\mu_{6}\s^{2}-\mu_{4}^{2})$.
Finally
\[\frac{\partial^{4}I_{\point }}{\partial x^{4}}(0,\s^{2},\mu_{4})=\frac{(-3\s^{4}+\mu_{4}+4\s^{4}+3\s^{4}+\mu_{4}-4\s^{4})k_{1}(0,0,0)}{\s^{8}k_{1}(0,0,0)}=\frac{2\mu_{4}}{\s^{8}}\,.\]
We have likewise
\begin{align*}
\frac{\partial^{4}I_{\point }}{\partial x^{3}\partial y}(0,\s^{2},\mu_{4})&=\frac{\partial^{2}K_{1,2}(u,v,w)}{\partial x^{2}}(0,\s^{2},\mu_{4})\\
&=\frac{1}{\s^{4}}\frac{\partial^{2} K_{1,2}}{\partial u^{2}}(0,0,0)-\frac{1}{\s^{4}}\frac{\partial K_{1,2}}{\partial v}(0,0,0)\\
&=\frac{1}{\s^{8}k_{1}(0,0,0)}\left(\s^{2} \frac{\partial^{2}k_{2}}{\partial u^{2}}-\frac{k_{2}}{k_{1}}\frac{\partial^{2} g }{\partial u^{2}}-\s^{2}\frac{\partial k_{2}}{\partial v}+\frac{k_{2}}{k_{1}}\frac{\partial g}{\partial v}\right)(0,0,0)
\end{align*}
and
\[\frac{\partial^{4}I_{\point }}{\partial x^{3}\partial z}(0,\s^{2},\mu_{4})\!=\!\frac{1}{\s^{8}k_{1}(0,0,0)}\left(\s^{2} \frac{\partial^{2}k_{3}}{\partial u^{2}}-\frac{k_{3}}{k_{1}}\frac{\partial^{2} g }{\partial u^{2}}-\s^{2}\frac{\partial k_{3}}{\partial v}+\frac{k_{3}}{k_{1}}\frac{\partial g}{\partial v}\right)(0,0,0)\,.\]
But $k_{2}(0,0,0)=k_{3}(0,0,0)=0$ and we compute that
\[\frac{\partial^{2}k_{2}}{\partial u^{2}}(0,0,0)=\frac{\partial k_{2}}{\partial v}(0,0,0)=\frac{\partial^{2}k_{3}}{\partial u^{2}}(0,0,0)=\frac{\partial k_{3}}{\partial v}(0,0,0)=0\,.\]
Hence we have shown that
\[\frac{1}{\a!\b!\g!}\,\frac{\partial^{\a+\b+\g} I_{\point }}{\partial x^{\a}\partial y^{\b}\partial z^{\g}}(0,\s^{2},\mu_{4})=\left\{\begin{array}{cl}
\displaystyle{\frac{\mu_4 x^4}{12\s^{4}}} & \quad \mbox{if} \quad (\a,\b,\g)=(4,0,0)\,,\\[0.2cm]
\displaystyle{0}  & \quad \mbox{if} \quad (\a,\b,\g)=(3,1,0)\,,\\[0.2cm]
\displaystyle{0}  & \quad \mbox{if} \quad (\a,\b,\g)=(3,0,1)\,.
\end{array}\right.\]

\subsubsection{The terms at the fifth and sixth orders}

\noindent We still have to prove that
\[\frac{1}{120}\frac{\partial^{5}I_{\point }}{\partial x^{5}}(0,\s^{2},\mu_{4})=0 \qquad \mbox{and} \qquad \frac{1}{720}\frac{\partial^{6}I_{\point }}{\partial x^{4}}(0,\s^{2},\mu_{4})=A>0\,.\]
By symmetry of $I_{\point }$ at its first variable, we obtain immediately that its fifth partial derivative with respect to $x$ is null. Let us determine its sixth partial derivative with respect to $x$. We notice first that
\[\frac{\partial^{3} u}{\partial x^{3}}(0,\s^{2},\mu_{4})=\frac{\partial^{4} I_{\point }}{\partial x^{4}}(0,\s^{2},\mu_{4})=\frac{2\mu_{4}}{\s^{8}}\,,\]
\[\frac{\partial^{3} v}{\partial x^{3}}(0,\s^{2},\mu_{4})=\frac{\partial^{4} I_{\point }}{\partial x^{3}\partial y}(0,\s^{2},\mu_{4})=0=\frac{\partial^{4} I_{\point }}{\partial x^{3}\partial z}(0,\s^{2},\mu_{4})=\frac{\partial^{3} w}{\partial x^{3}}(0,\s^{2},\mu_{4})\,,\]
\[\frac{\partial^{4} u}{\partial x^{4}}(0,\s^{2},\mu_{4})=\frac{\partial^{5} I_{\point }}{\partial x^{5}}(0,\s^{2},\mu_{4})=0\,.\]
Thus we know the partial derivatives with respect to $x$ at $(0,\s^{2},\mu_{4})$ of the functions $u$, $v$, and $w$ until the third order. We write then the sixth partial derivative $I_{\point }$ with respect to $x$, taken at $(0,\s^{2},\mu_{4})$ and we only keep the terms which do not vanish because of the symmetries:
\begin{multline*}
\frac{\partial^{6}I_{\point }}{\partial x^{6}}(0,\s^{2},\mu_{4})=\frac{\partial^{4}K_{1,1}(u,v,w)}{\partial x^{4}}(0,\s^{2},\mu_{4})=\frac{\partial^{4} v}{\partial x^{4}}(0,\s^{2},\mu_{4})\frac{\partial K_{1,1}}{\partial v}(0,0,0)\\
+\frac{\partial^{4} w}{\partial x^{4}}(0,\s^{2},\mu_{4})\frac{\partial K_{1,1}}{\partial w}(0,0,0)+3\left(\frac{\partial^{2} v}{\partial x^{2}}(0,\s^{2},\mu_{4})\right)^{2}\frac{\partial^{2} K_{1,1}}{\partial v^{2}}(0,0,0)\\
+\left(\frac{\partial u}{\partial x}(0,\s^{2},\mu_{4})\right)^{4}\frac{\partial^{4} K_{1,1}}{\partial u^{4}}(0,0,0)+4\frac{\partial^{3} u}{\partial x^{3}}(0,\s^{2},\mu_{4})\frac{\partial u}{\partial x}(0,\s^{2},\mu_{4})\frac{\partial^{2} K_{1,1}}{\partial u^{2}}(0,0,0)\\
+6\left(\frac{\partial u}{\partial x}(0,\s^{2},\mu_{4})\right)^{2}\frac{\partial^{2} v}{\partial x^{2}}(0,\s^{2},\mu_{4})\frac{\partial^{3} K_{1,1}}{\partial u^{2}\partial v}(0,0,0)\,.
\end{multline*}
We computed above that 
\[\frac{\partial K_{1,1}}{\partial v}(0,0,0)=\frac{\s^{4}-\mu_{4}}{\s^{4}}\qquad \mbox{and} \qquad \frac{\partial^{2} K_{1,1}}{\partial u^{2}}(0,0,0)=\frac{\mu_{4}+\s^{4}}{\s^{4}}\,.\]
As a consequence
\[\frac{\partial^{6}I_{\point }}{\partial x^{6}}(0,\s^{2},\mu_{4})=\frac{3}{\s^{8}}\frac{\partial^{2} K_{1,1}}{\partial v^{2}}(0,0,0)+\frac{8\mu_{4}(\mu_{4}+\s^{4})}{\s^{14}}-\frac{6}{\s^{8}}\frac{\partial^{3} K_{1,1}}{\partial u^{2}\partial v}(0,0,0)\]
\begin{equation}
+\frac{1}{\s^{8}}\frac{\partial^{4} K_{1,1}}{\partial u^{4}}(0,0,0)+\frac{\partial^{4} v}{\partial x^{4}}(0,\s^{2},\mu_{4})\frac{\s^{4}-\mu_{4}}{\s^{4}}+\frac{\partial^{4} w}{\partial x^{4}}(0,\s^{2},\mu_{4})\frac{\partial K_{1,1}}{\partial w}(0,0,0).
\label{DIxxxxxx}
\end{equation}
We have
\begin{multline*}
\frac{\partial^{4}v}{\partial x^{4}}(0,\s^{2},\mu_{4})=\frac{\partial^{3}K_{1,2}(u,v,w)}{\partial x^{3}}(0,\s^{2},\mu_{4})=\frac{\partial^{3} u}{\partial x^{3}}(0,\s^{2},\mu_{4})\frac{\partial K_{1,2}}{\partial u}(0,0,0)\\
+\left(\frac{\partial u}{\partial x}(0,\s^{2},\mu_{4})\right)^{3}\frac{\partial^{3} K_{1,2}}{\partial u^{3}}(0,0,0)+3\frac{\partial u}{\partial x}(0,\s^{2},\mu_{4})\frac{\partial^{2} v}{\partial x^{2}}(0,\s^{2},\mu_{4})\frac{\partial^{2} K_{1,2}}{\partial u\partial v}(0,0,0)\\
=\frac{1}{\s^{8}}\left(2\mu_{4}\frac{\partial K_{1,2}}{\partial u}(0,0,0)+\s^{2}\frac{\partial^{3} K_{1,2}}{\partial u^{3}}(0,0,0)-3\s^{2}\frac{\partial^{2} K_{1,2}}{\partial u\partial v}(0,0,0)\right)
\end{multline*}
and we have already computed that
\[\frac{\partial K_{1,2}}{\partial u}(0,0,0)=\frac{1}{g(0,0,0)}\frac{\partial k_{2}}{\partial u}(0,0,0)=-\frac{1}{\s^{2}}\,.\]
By differentiating and evaluating at $(0,0,0)$, we get
\begin{align*}
\frac{\partial^{2} K_{1,2}}{\partial u\partial v}(0,0,0)&=\frac{1}{g(0,0,0)}\frac{\partial^{2} k_{2}}{\partial u\partial v}(0,0,0)-\frac{1}{g^{2}(0,0,0)}\frac{\partial k_{2}}{\partial u}(0,0,0)\frac{\partial g}{\partial v}(0,0,0)\\
&=\frac{1}{\s^{4}k_{1}(0,0,0)}\left(\s^{2}\frac{\partial^{2} k_{2}}{\partial u\partial v}(0,0,0)+\frac{\partial g}{\partial v}(0,0,0)\right)\,.
\end{align*}
The properties of the functions $f_{i}$, $i \in \N$, and their partial derivatives give us
\[\frac{\partial^{2} k_{2}}{\partial u\partial v}(0,0,0)=3\s^{2}k_{1}(0,0,0)-\mu_{10}(\mu_{4}-\s^{4})-\mu_{8}(\mu_{4}\s^{2}-\mu_{6})-\mu_{6}(\mu_{6}\s^{2}-\mu_{4}^{2})\]
and, by formula~(\ref{Dgv}), we get
\[\frac{\partial^{2} K_{1,2}}{\partial u\partial v}(0,0,0)=\frac{3\s^{4}k_{1}(0,0,0)+(\mu_{4}-4\s^{4})k_{1}(0,0,0)}{\s^{4}k_{1}(0,0,0)}=\frac{\mu_{4}-\s^{4}}{\s^{4}}\,.\]
Finally, by using the fact that the partial derivative of $g$ with respect to $u$ at $(0,0,0)$ vanishes, we obtain
\begin{align*}
\frac{\partial^{3} K_{1,2}}{\partial u^{3}}(0,0,0)&=\frac{1}{g(0,0,0)}\frac{\partial^{3} k_{2}}{\partial u^{3}}(0,0,0)-\frac{3}{g^{2}(0,0,0)}\frac{\partial k_{2}}{\partial u}(0,0,0)\frac{\partial^{2} g}{\partial u^{2}}(0,0,0)\\
&=\frac{1}{\s^{4}k_{1}(0,0,0)}\left(\s^{2}\frac{\partial^{3} k_{2}}{\partial u^{3}}(0,0,0)+3\frac{\partial^{2} g}{\partial u^{2}}(0,0,0)\right)\,.
\end{align*}
We compute that
\[\frac{\partial^{3} k_{2}}{\partial u^{3}}(0,0,0)=9\s^{2}k_{1}(0,0,0)-3\mu_{10}(\mu_{4}-\s^{4})-3\mu_{8}(\mu_{4}\s^{2}-\mu_{6})-3\mu_{6}(\mu_{6}\s^{2}-\mu_{4}^{2})\]
and, by formula~(\ref{Dguu}), we obtain
\[\frac{\partial^{2} K_{1,2}}{\partial u\partial v}(0,0,0)=\frac{9\s^{4}k_{1}(0,0,0)-3(\mu_{4}+4\s^{4})k_{1}(0,0,0)}{\s^{4}k_{1}(0,0,0)}=\frac{-3(\mu_{4}+\s^{4})}{\s^{4}}\,.\]
Finally
\[\frac{\partial^{4}v}{\partial x^{4}}(0,\s^{2},\mu_{4})=\frac{1}{\s^{8}}\left(\frac{-2\mu_{4}}{\s^{2}}+\frac{-3(\mu_{4}+\s^{4})}{\s^{2}}-3\frac{\mu_{4}-\s^{4}}{\s^{2}}\right)=\frac{-8\mu_{4}}{\s^{10}}\,.\]
Likewise we obtain
\begin{multline*}
\frac{\partial^{4}w}{\partial x^{4}}(0,\s^{2},\mu_{4})=\frac{1}{\s^{10}k_{1}(0,0,0)}\Big(2\mu_{4}\frac{\partial k_{3}}{\partial u}(0,0,0)-3\s^{2}\frac{\partial^{2} k_{3}}{\partial u \partial v}(0,0,0)\\
+\frac{3}{k_{1}(0,0,0)}\frac{\partial k_{3}}{\partial u}(0,0,0)\frac{\partial g}{\partial v}(0,0,0)+\s^{2}\frac{\partial^{3} k_{3}}{\partial u^{3}}(0,0,0)\\
-\frac{3}{k_{1}(0,0,0)}\frac{\partial k_{3}}{\partial u}(0,0,0)\frac{\partial^{2} g}{\partial u^{2}}(0,0,0)\Big)\,.
\end{multline*}
We compute that
\[\frac{\partial k_{3}}{\partial u}(0,0,0)=\frac{\partial^{2} k_{3}}{\partial u\partial v}(0,0,0)=0\qquad \mbox{and} \qquad \frac{\partial^{2} k_{3}}{\partial u\partial v}(0,0,0)=2k_{1}(0,0,0)\,,\]
so that
\[\frac{\partial^{4}w}{\partial x^{4}}(0,\s^{2},\mu_{4})=\frac{2}{\s^{8}}\,.\]
Next we have
\[\frac{\partial K_{1,1}}{\partial w}(0,0,0)=\frac{1}{\s^{4}k_{1}(0,0,0)}\left(\s^{2}\frac{\partial k_{1}}{\partial w}(0,0,0)-\frac{\partial g}{\partial w}(0,0,0)\right)\]
and we compute that the partial derivative of $k_{1}$ with respect to $w$ taken at $(0,0,0)$ is equal to
\[-3\mu_{4}k_{1}(0,0,0)+2\mu_{10}(\mu_{4}\s^{2}-\mu_{6})+\mu_{12}(\mu_{4}-\s^{4})+\mu^{2}_{8}+\mu^{2}_{6}\mu_{4}-2\mu_{8}\mu_{4}^{2}\,,\]
and that the partial derivative of $g$ with respect to $w$ at $(0,0,0)$ is equal to
\begin{multline*}
(\mu_{6}-4\mu_{4}\s^{2})k_{1}(0,0,0)+2\mu_{10}\s^{2}(\mu_{4}\s^{2}-\mu_{6})+\mu_{12}\s^{2}(\mu_{4}-\s^{4})\\
+\s^{2}(\mu^{2}_{8}+\mu^{2}_{6}\mu_{4}-2\mu_{8}\mu_{4}^{2})\,.
\end{multline*}
Whence
\[\frac{\partial K_{1,1}}{\partial w}(0,0,0)=\frac{-3\s^{2}\mu_{4}k_{1}(0,0,0)-(\mu_{6}-4\mu_{4}\s^{2})k_{1}(0,0,0)}{\s^{4}k_{1}(0,0,0)}=\frac{\mu_{4}\s^{2}-\mu_{6}}{\s^{4}}\,.\]
We insert these previous results in the expression in the formula~(\ref{DIxxxxxx}) of the fourth partial derivative of $I_{\point }$ with respect to $x$ taken at $(0,\s^{2},\mu_{4})$ :
\begin{multline*}
\frac{\partial^{6}I_{\point }}{\partial x^{6}}(0,\s^{2},\mu_{4})=\frac{3}{\s^{8}}\frac{\partial^{2} K_{1,1}}{\partial v^{2}}(0,0,0)-\frac{6}{\s^{8}}\frac{\partial^{3} K_{1,1}}{\partial u^{2}\partial v}(0,0,0)+\frac{1}{\s^{8}}\frac{\partial^{4} K_{1,1}}{\partial u^{4}}(0,0,0)\\
+\frac{16\mu_{4}^{2}-2\mu_{6}\s^{2}+2\mu_{4}\s^{4}}{\s^{14}}\,.
\end{multline*}
We have
\[\frac{\partial^{2} K_{1,1}}{\partial v^{2}}=\frac{1}{g} \frac{\partial^{2}k_{1}}{\partial v^{2}}-\frac{2}{g^{2}}\frac{\partial g }{\partial v}\,\frac{\partial k_{1}}{\partial v}-\frac{k_{1}}{g^{2}}\frac{\partial^{2} g }{\partial v^{2}}+\frac{2k_{1}}{g^{3}}\left(\frac{\partial g }{\partial v}\right)^{2}\,.\]
Thus
\begin{multline*}
\frac{\partial^{2} K_{1,1}}{\partial v^{2}}(0,0,0)=\frac{1}{\s^{4}k_{1}(0,0,0)}\left( \s^{2}\frac{\partial^{2}k_{1}}{\partial v^{2}}(0,0,0)-\frac{\partial^{2} g }{\partial v^{2}}(0,0,0)\right)\\
-\frac{2}{\s^{6}k^{2}_{1}(0,0,0)}\left( \s^{2}\frac{\partial k_{1}}{\partial v}(0,0,0)-\frac{\partial g }{\partial v}(0,0,0) \right)\frac{\partial g }{\partial v}(0,0,0)\\
=\frac{1}{\s^{4}k_{1}(0,0,0)}\left( \s^{2}\frac{\partial^{2}k_{1}}{\partial v^{2}}-\frac{\partial^{2} g }{\partial v^{2}}-2\s^{2}\frac{\partial K_{1,1}}{\partial v}\frac{\partial g }{\partial v}\right)(0,0,0)\,.
\end{multline*}
We already know the values at $(0,0,0)$ of the partial derivatives of $g$ and $k_{1}$ with respect to $v$. Moreover, the properties of the function $f_{i}$, $i \in \N$, and their partial derivatives give us, after factorisation,
\begin{multline*}
\s^{2}\frac{\partial^{2}k_{1}}{\partial v^{2}}(0,0,0)-\frac{\partial^{2} g }{\partial v^{2}}(0,0,0)=(7\s^{2}\mu_{4}-\mu_{6}-8\s^{6})k_{1}(0,0,0)-2\mu_{10}(\mu_{4}-\s^{4})^{2}\\
+2\mu_{6}(\mu_{4}-\s^{4})(\mu_{8}+\mu_{4}^{2}-2\s^{2}\mu_{6})-2\s^{2}(\mu_{6}\s^{2}-\mu_{4}^{2})^{2}\,.
\end{multline*}
As a consequence
\begin{multline*}
\s^{4}k_{1}(0,0,0)\frac{\partial^{2} K_{1,1}}{\partial v^{2}}(0,0,0)=(7\s^{2}\mu_{4}-\mu_{6}-8\s^{6})k_{1}(0,0,0)-2\mu_{10}(\mu_{4}-\s^{4})^{2}\\
+2\mu_{6}(\mu_{4}-\s^{4})(\mu_{8}+\mu_{4}^{2}-2\s^{2}\mu_{6})-2\s^{2}(\mu_{6}\s^{2}-\mu_{4}^{2})^{2}-2(\s^{4}-\mu_{4})\Big(\mu_{10}(\mu_{4}-\s^{4})\\
+\mu_{8}(\mu_{4}\s^{2}-\mu_{6})+\mu_{6}(\mu_{6}\s^{2}-\mu_{4}^{2})\Big)-\frac{2(\s^{4}-\mu_{4})(\mu_{4}-4\s^{4})k_{1}(0,0,0)}{\s^{2}}\,.
\end{multline*}
Thus
\begin{multline*}
\s^{4}k_{1}(0,0,0)\frac{\partial^{2} K_{1,1}}{\partial v^{2}}(0,0,0)=2\s^{2}\left((\mu_{4}-\s^{4})(\mu_{4}\mu_{8}-\mu_{6}^{2})-(\mu_{6}\s^{2}-\mu_{4}^{2})^{2}\right)
\\+\frac{2\mu_{4}^{2}+\s^{4}\mu_{4}+\s^{2}\mu_{6}-4\s^{8}}{\s^{2}}k_{1}(0,0,0)\,.
\end{multline*}
By developing we get $(\mu_{4}-\s^{4})(\mu_{4}\mu_{8}-\mu_{6}^{2})-(\mu_{6}\s^{2}-\mu_{4}^{2})^{2}=\mu_{4}k_{1}(0,0,0)$. Thus
\[\frac{\partial^{2} K_{1,1}}{\partial v^{2}}(0,0,0)=\frac{2\mu_{4}^{2}-\s^{4}\mu_{4}-\s^{2}\mu_{6}}{\s^{6}}\,.\]
Next, since the partial derivatives of $g$ and $k_{1}$ with respect to $u$ are null at $(0,0,0)$, we get
\begin{multline*}
\frac{\partial^{3} K_{1,1}}{\partial u^{2}\partial v}(0,0,0)=\frac{1}{g(0,0,0)}\frac{\partial^{3}k_{1}}{\partial u^{2}\partial v}(0,0,0)-\frac{1}{g^{2}(0,0,0)}\frac{\partial^{2}k_{1}}{\partial u^{2}}(0,0,0)\frac{\partial g}{\partial v}(0,0,0)\\
-\frac{1}{g^{2}(0,0,0)}\frac{\partial k_{1}}{\partial v}(0,0,0)\frac{\partial^{2}g}{\partial u^{2}}(0,0,0)+\frac{2k_{1}(0,0,0)}{g^{3}(0,0,0)}\frac{\partial^{2}g}{\partial u^{2}}(0,0,0)\frac{\partial g}{\partial v}(0,0,0)\\
-\frac{k_{1}(0,0,0)}{g^{2}(0,0,0)}\frac{\partial^{3}g}{\partial u^{2}\partial v}(0,0,0)\,.
\end{multline*}
As in the computation of $\left(\partial^{2} K_{1,1}/\partial v^{2}\right)(0,0,0)$, we notice that this expression can be written as a function of the second partial derivative of $K_{1,1}$ with respect to $u$ and of the partial derivative of $K_{1,1}$ with respect to $v$ :
\begin{multline*}
\frac{\partial^{3} K_{1,1}}{\partial u^{2}\partial v}(0,0,0)=\frac{1}{\s^{4}k_{1}(0,0,0)}\Big(\s^{2}\frac{\partial^{3}k_{1}}{\partial u^{2}\partial v}(0,0,0)-\frac{\partial^{3}g}{\partial u^{2}\partial v}(0,0,0)\\
-\s^{2}\frac{\partial^{2}K_{1,1}}{\partial u^{2}}(0,0,0)\frac{\partial g}{\partial v}(0,0,0)-\s^{2}\frac{\partial K_{1,1}}{\partial v}(0,0,0)\frac{\partial^{2}g}{\partial u^{2}}(0,0,0)\Big)\,.
\end{multline*}
After factorising, this is equal to
\begin{multline*}
\frac{1}{\s^{4}k_{1}(0,0,0)}\Big(\s^{2}\frac{\partial^{3}k_{1}}{\partial u^{2}\partial v}(0,0,0)-\frac{\partial^{3}g}{\partial u^{2}\partial v}(0,0,0)-\frac{2(\mu_{4}^{2}-4\s^{8})}{\s^{2}}k_{1}(0,0,0)\\
-2\s^{4}\big(\mu_{10}(\mu_{4}-\s^{4})+\mu_{8}(\mu_{4}\s^{2}-\mu_{6})+\mu_{6}(\mu_{6}\s^{2}-\mu_{4}^{2})\big)\Big)\,.
\end{multline*}
and the properties of the functions $f_{i}$, $i \in \N$, and their partial derivatives, give us, after factorising by $k_{1}(0,0,0)$,
\begin{multline*}
\s^{2}\frac{\partial^{3}k_{1}}{\partial u^{2}\partial v}(0,0,0)-\frac{\partial^{3}g}{\partial u^{2}\partial v}(0,0,0)=(\s^{2}\mu_{4}+\mu_{6}-8\s^{6})k_{1}(0,0,0)\\
+2\s^{4}\big(\mu_{10}(\mu_{4}-\s^{4})+\mu_{8}(\mu_{4}\s^{2}-\mu_{6})+\mu_{6}(\mu_{6}\s^{2}-\mu_{4}^{2})\big)\,.
\end{multline*}
As a consequence
\begin{align*}
\frac{\partial^{3} K_{1,1}}{\partial u^{2}\partial v}(0,0,0)&=\frac{1}{\s^{4}k_{1}(0,0,0)}\Big(-\frac{2(\mu_{4}^{2}-4\s^{8})}{\s^{2}}+\s^{2}\mu_{4}+\mu_{6}-8\s^{6}\Big)k_{1}(0,0,0)\\
&=\frac{-2\mu_{4}^{2}+\s^{4}\mu_{4}+\s^{2}\mu_{6}}{\s^{6}}\,.
\end{align*}
We finish this proof by computing the fourth partial derivative of $K_{1,1}$ with respect to $u$ taken at $(0,0,0)$. Since the partial derivatives of $g$ and $k_{1}$ with respect to $u$ are null at $(0,0,0)$, we get
\begin{multline*}
\frac{\partial^{4} K_{1,1}}{\partial u^{4}}(0,0,0)=\frac{1}{g(0,0,0)}\frac{\partial^{4}k_{1}}{\partial u^{4}}(0,0,0)-\frac{k_{1}(0,0,0)}{g^{2}(0,0,0)}\frac{\partial^{4}g}{\partial u^{4}}(0,0,0)\\
-\frac{6}{g^{2}(0,0,0)}\frac{\partial^{2}k_{1}}{\partial u^{2}}(0,0,0)\frac{\partial^{2}g}{\partial u^{2}}(0,0,0)+\frac{6k_{1}(0,0,0)}{g^{3}(0,0,0)}\left(\frac{\partial^{2}g}{\partial u^{2}}(0,0,0)\right)^{2}\,.
\end{multline*}
After factorisation, it is equal to
\[\frac{1}{\s^{4}k_{1}(0,0,0)}\Big(\s^{2}\frac{\partial^{4}k_{1}}{\partial u^{4}}(0,0,0)-\frac{\partial^{4}g}{\partial u^{4}}(0,0,0)-6\s^{2}\frac{\partial^{2}K_{1,1}}{\partial u^{2}}(0,0,0)\frac{\partial^{2}g}{\partial u^{2}}(0,0,0)   \Big)\,.\]
We compute that
\begin{multline*}
\s^{2}\frac{\partial^{4}k_{1}}{\partial u^{4}}(0,0,0)-\frac{\partial^{4}g}{\partial u^{4}}(0,0,0)=-(23\s^{2}\mu_{4}+5\mu_{6}+24\s^{6})k_{1}(0,0,0)\\
+6(\mu_{4}+\s^{4})\big(\mu_{10}(\mu_{4}-\s^{4})+\mu_{8}(\mu_{4}\s^{2}-\mu_{6})+\mu_{6}(\mu_{6}\s^{2}-\mu_{4}^{2})\big)
\end{multline*}
and we have already computed that
\begin{multline*}
\frac{\partial^{2}g}{\partial u^{2}}(0,0,0)\frac{\partial^{2}K_{1,1}}{\partial u^{2}}(0,0,0)=\frac{\mu_{4}+\s^{4}}{\s^{4}}\Big(-(\mu_{4}+4\s^{4})k_{1}(0,0,0)\\
+\s^{2}\mu_{10}(\mu_{4}-\s^{4})+\s^{2}\mu_{8}(\mu_{4}\s^{2}-\mu_{6})+\s^{2}\mu_{6}(\mu_{6}\s^{2}-\mu_{4}^{2})\Big)\,.
\end{multline*}
Thus we get
\begin{multline*}
\s^{4}k_{1}(0,0,0)\frac{\partial^{4} K_{1,1}}{\partial u^{4}}(0,0,0)=-(23\s^{2}\mu_{4}+5\mu_{6}+24\s^{6})k_{1}(0,0,0)\\
+\frac{6(\mu_{4}+\s^{4})(\mu_{4}+4\s^{4})}{\s^{2}}-k_{1}(0,0,0)\,.
\end{multline*}
Hence
\[\frac{\partial^{4} K_{1,1}}{\partial u^{4}}(0,0,0)=\frac{7\s^{4}\mu_{4}-5\mu_{6}\s^{2}+6\mu_{4}^{2}}{\s^{6}}\,.\]
We have then
\begin{multline*}
\frac{\partial^{6}I_{\point }}{\partial x^{6}}(0,\s^{2},\mu_{4})=\frac{3(2\mu_{4}^{2}-\s^{4}\mu_{4}-\s^{2}\mu_{6})}{\s^{14}}-\frac{6(-2\mu_{4}^{2}+\s^{4}\mu_{4}+\s^{2}\mu_{6})}{\s^{14}}\\
+\frac{7\s^{4}\mu_{4}-5\mu_{6}\s^{2}+6\mu_{4}^{2}}{\s^{14}}+\frac{16\mu_{4}^{2}-2\mu_{6}\s^{2}+2\mu_{4}\s^{4}}{\s^{14}}\,.
\end{multline*}
\noindent We have shown that
\[\frac{\partial^{6}I_{\point }}{\partial x^{6}}(0,\s^{2},\mu_{4})=\frac{40\mu_{4}^{2}-16\s^{2}\mu_{6}}{\s^{14}}\,.\]
Thus
\[\frac{1}{\a!\b!\g!}\,\frac{\partial^{\a+\b+\g} I_{\point }}{\partial x^{\a}\partial y^{\b}\partial z^{\g}}(0,\s^{2},\mu_{4})=\left\{\begin{array}{cl}
\displaystyle{0} & \quad \mbox{if} \quad (\a,\b,\g)=(5,0,0)\,,\\
\displaystyle{\frac{5\mu_{4}^{2}-2\s^{2}\mu_{6}}{90\s^{14}}} & \quad \mbox{if} \quad (\a,\b,\g)=(6,0,0)\,.
\end{array}\right.\]

\subsubsection{Conclusion}

\noindent The term $A$ is then $(5\mu_{4}^{2}-2\s^{2}\mu_{6})/(90\s^{14})$. The computations of the previous section imply that

\begin{prop} Let $\r$ be a symmetric probability measure on $\R$ whose support contains at least five points. We suppose that $(0,0,0) \in \Dro_{\L_{\point }}$ and that
\[5\mu_{4}^{2}>2\s^{2}\mu_{6}.\]
Let $q$ denote the definite positive quadratic form on $\R^2$ given by
\[\forall (y,z) \in \R^2 \qquad q(y,z)=\frac{\mu_{8}-\mu_{4}^{2}}{2a}y^{2}+\frac{\mu_{4}\s^{2}-\mu_{6}}{a}yz+\frac{\mu_{4}-\s^{4}}{2a}z^{2}\,.\]
Then, in the neighbourhood of $(0,\s^2,\mu_4)$,
\[I_{\point }(x,y,z)-\frac{x^{2}}{2\s^{2}}+\frac{x^{2}(y-\s^2)}{2\s^4}-\frac{\mu_4 x^4}{12 \s^8}\sim \frac{(5\mu_{4}^{2}-2\s^{2}\mu_{6})x^6}{90\s^{14}}+q(y-\s^{2},z-\mu_{4})\,.\]
\label{DevI*}
\end{prop}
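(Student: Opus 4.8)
This proposition is essentially the bookkeeping conclusion of the computations carried out throughout section~\ref{ExpansionI*}, so the plan is one of assembly. First I would recall the order~$6$ Taylor expansion of $I_{\point}$ at $(0,\s^{2},\mu_{4})$ written down in section~\ref{ExpansionI*sub}: its quadratic part is $\frac{x^{2}}{2\s^{2}}+q(y-\s^{2},z-\mu_{4})$, its higher part is the sum over $(\a,\b,\g)\in\Tc$ of the monomials $\frac{1}{\a!\b!\g!}\frac{\partial^{\a+\b+\g}I_{\point}}{\partial x^{\a}\partial y^{\b}\partial z^{\g}}(0,\s^{2},\mu_{4})\,x^{\a}(y-\s^{2})^{\b}(z-\mu_{4})^{\g}$, and there is a Peano remainder $o(\|x,y-\s^{2},z-\mu_{4}\|^{6})$. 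The hypothesis $5\mu_{4}^{2}>2\s^{2}\mu_{6}$ enters precisely here: it guarantees that $A=(5\mu_{4}^{2}-2\s^{2}\mu_{6})/(90\s^{14})$ is positive, so that lemma~\ref{negligeable} can be applied with this $A$ and with the positive definite form~$q$.

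Next I would discard the negligible monomials. By lemma~\ref{negligeable}, every monomial with $\frac{\a}{3}+\b+\g>2$ is $o(Ax^{6}+q(y-\s^{2},z-\mu_{4}))$, and the same lemma gives $\|x,y-\s^{2},z-\mu_{4}\|^{6}=O(Ax^{6}+q(y-\s^{2},z-\mu_{4}))$, so the Peano remainder is also negligible compared to $Ax^{6}+q(y-\s^{2},z-\mu_{4})$. Hence only the indices $(\a,\b,\g)\in\Tc$ with $\frac{\a}{3}+\b+\g\leq2$ matter, namely $(2,1,0)$, $(2,0,1)$, $(3,0,0)$, $(3,1,0)$, $(3,0,1)$, $(4,0,0)$, $(5,0,0)$ and $(6,0,0)$.

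Then I would substitute the coefficients computed in the preceding subsubsections: the coefficients attached to $(2,0,1)$, $(3,0,0)$, $(3,1,0)$, $(3,0,1)$ and $(5,0,0)$ all vanish, the one attached to $(2,1,0)$ equals $-1/(2\s^{4})$, the one attached to $(4,0,0)$ equals $\mu_{4}/(12\s^{8})$, and the one attached to $(6,0,0)$ equals $A$. Plugging these into the Taylor expansion gives
\[I_{\point}(x,y,z)=\frac{x^{2}}{2\s^{2}}-\frac{x^{2}(y-\s^{2})}{2\s^{4}}+\frac{\mu_{4}x^{4}}{12\s^{8}}+\frac{(5\mu_{4}^{2}-2\s^{2}\mu_{6})x^{6}}{90\s^{14}}+q(y-\s^{2},z-\mu_{4})+o\!\left(Ax^{6}+q(y-\s^{2},z-\mu_{4})\right),\]
and transferring the three explicit lower-order $x$-terms to the left-hand side produces exactly the claimed equivalence, because $Ax^{6}+q(y-\s^{2},z-\mu_{4})$ is, again by lemma~\ref{negligeable}, bounded below by a positive multiple of $\|x,y-\s^{2},z-\mu_{4}\|^{6}$ and hence stays nonzero away from $(0,\s^{2},\mu_{4})$. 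There is no genuine obstacle remaining at this stage; all the difficulty has already been absorbed into the explicit differentiation of $K_{1,1}$, $K_{1,2}$, $K_{1,3}$, $g$ and the $k_{i}$ performed above. The only step deserving a little care is the use of lemma~\ref{negligeable} to make sure that every discarded term --- the mixed monomials in $x$, $y-\s^{2}$, $z-\mu_{4}$ as well as the Taylor remainder --- is really $o(Ax^{6}+q(y-\s^{2},z-\mu_{4}))$ and not merely $o(\|x,y-\s^{2},z-\mu_{4}\|^{6})$.
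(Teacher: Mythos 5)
Your proposal is correct and follows exactly the route the paper takes: proposition~\ref{DevI*} is stated there as the direct consequence of the order-$6$ Taylor expansion, lemma~\ref{negligeable}, and the coefficient computations of section~\ref{ExpansionI*sub}, which is precisely the assembly you describe (you even silently correct the paper's typo $\mu_4/(12\s^4)$ to the intended $\mu_4/(12\s^8)$ for the $(4,0,0)$ coefficient). Nothing further is needed.
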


\noindent For many usual distributions, the term $5\mu_{4}^{2}-2\s^{2}\mu_{6}$ is positive. For example $5\mu_{4}^{2}-2\s^{2}\mu_{6}=12b^2c^8>0$, for $\r=(1-2b)\d_0+b\d_{-c}+b\d_c$ with $c>0$ and $b\in \,]0,1/2[$. However we can find a probability measure on $\R$ for which this term is non-positive. To this end, it is enough to take a measure whose sixth moment explodes compared to the fourth moment. Let us consider the measure with density
\[x\longmapsto\frac{1}{1+x^6}\ind{[-5,5]}(x)\,\left(\int_{-5}^{5}\frac{dy}{1+y^6}\right)^{-1}\]
with respect to the Lebesgue measure on $\R$. Its moments can be computed in simple fractions $X^{\a}/(1+X^6)$, $\a\in \{2,4,6\}$. We compute that $5\mu_{4}^{2}-2\s^{2}\mu_{6}$ is non-positive (an approaching value is $-0.483$).

\section{Construction of an interaction term}
\label{ConstructionInteraction}

\noindent In this section we investigate how to build an interaction term whose associated model is amenable to mathematical analysis. We first find criteria on $\r$ and $H$ so that $I_{\point}-H$ has a unique minimum at $(0,\s^2,\mu_4)$ in any compact subset of~$\Theta^{\!*}$ whose interior contains $(0,\s^2,\mu_4)$, and so that the expansion of $I_{\point}-F-R$ still holds for $I_{\point}-H$. Next we extend the criteria on $H$ in order to control what happens outside any compact of~$\Theta^{\!*}$, expecially what happens around the line $x=y=0$ of $\R^3$. We use then a variant of Varadhan's lemma. We end this section by proving that the function $H$ given in the introduction satisfies these criteria.

\subsection{First investigations}
\label{FirstInvestigations}

\noindent Let us suppose that $\r$ is a symmetric probability measure on $\R$ whose support contains at least five points. We assume that
 \[(0,0,0) \in \Dro_{\L_{\point }} \qquad \mbox{and} \qquad 5\mu_{4}^{2}>2\s^{2}\mu_{6}\,.\]

\noindent In section~\ref{Preliminaries} we saw that it seems natural to consider the interacting function $H=F+R$ with
\[R: (x,y,z)\in \R\times \R\backslash\{0\}\times\R\longmapsto \frac{zx^{4}}{12y^{4}}\,.\]
It satisfies
\[\forall n \geq 1\quad \forall (x,y,z)\in \R\times \R\backslash\{0\}\times\R \qquad \frac{1}{n}H(xn,yn,zn)=H(x,y,z)\,.\]

\noindent In the neighbourhood of $(0,\s^{2},\mu_{4})$,
\[F(x,y)+R(x,y,z)=\frac{x^{2}}{2\s^{2}}\frac{1}{1+h}+\frac{\mu_{4}x^{4}}{12\s^{8}}\frac{1}{(1+h)^{4}}+\frac{(z-\mu_{4})x^{4}}{12\s^{8}}\frac{1}{(1+h)^{4}}\,,\]
where $h=(y-\s^{2})/\s^{2}$. In the neighbourhood of $0$, we have
\[\frac{1}{1+h}=1-h+h^{2}-h^{3}+h^{4}+o(h^{4})\,,\]
\[\frac{1}{(1+h)^{4}}=1-4h+10h^{2}-20h^{3}+o(h^{3})\,.\]
Thus, in the neighbourhood of $(0,\s^{2},\mu_{4})$,
\begin{multline*}
F(x,y)+R(x,y,z)=\frac{x^{2}}{2\s^{2}}-\frac{x^{2}(y-\s^{2})}{2\s^{4}}+\frac{x^{2}(y-\s^{2})^{2}}{2\s^{6}}+\frac{\mu_{4}x^{4}}{12\s^{8}}\\-\frac{x^{2}(y-\s^{2})^{3}}{2\s^{8}}-\frac{\mu_{4}x^{4}(y-\s^{2})}{3\s^{10}}+\frac{x^{4}(z-\mu_{4})}{12\s^{8}}+\frac{x^{2}(y-\s^{2})^{4}}{2\s^{10}}\\+\frac{5\mu_{4}x^{4}(y-\s^{2})^{2}}{6\s^{12}}-\frac{x^{4}(y-\s^{2})(z-\mu_{4})}{3\s^{10}}+o(\|x,y-\s^{2},z-\mu_{4}\|^{6})\,.
\end{multline*}

\noindent Lemma~\ref{negligeable} implies that, in the neighbourhood of $(0,\s^{2},\mu_{4})$,
\begin{multline*}
F(x,y)+R(x,y,z)=\frac{x^{2}}{2\s^{2}}-\frac{x^{2}(y-\s^{2})}{2\s^{4}}+\frac{\mu_{4}x^{4}}{12\s^{8}}\\+o\left(\frac{(5\mu_{4}^{2}-2\s^{2}\mu_{6})x^6}{90\s^{14}}+q(y-\s^{2},z-\mu_{4})\right)
\end{multline*}
and then it follows from proposition~\ref{DevI*} that, in the neighbourhood of $(0,\s^{2},\mu_{4})$,
\[I_{\point }(x,y,z)-F(x,y)-R(x,y,z) \sim \frac{(5\mu_{4}^{2}-2\s^{2}\mu_{6})x^6}{90\s^{14}}+q(y-\s^{2},z-\mu_{4})\,.\]
\label{eq-x6}

\noindent The computations of the previous section show that, in the neighbourhood of $0$,
\[I_{\point }(x,\s^{2},\mu_{4})-F(x,\s^{2})-R(x,\s^{2},\mu_{4}) \sim \frac{5\mu_{4}^{2}-2\s^{2}\mu_{6}}{90\s^{14}}x^{6}\,.\]
As a consequence, if $5\mu_{4}^{2}<2\s^{2}\mu_{6}$ then $I_{\point }-F-R$ does not have a minimum at $(0,\s^{2},\mu_{4})$. Thus it is not possible to prove that $I_{\point }-F-R$ is non-negative for any symmetric probability measures on $\R$, as we did in~\cite{CerfGorny} for $I-F$. The techniques we used there have not been successful and we have not been able to show that $I_{\point }-F-R$ has a unique minimum for an interesting class of probability measures on $\R$. We will go around this problem by modifying the interacting function $H$ in order to \og force \fg{} the function
\[G_n : (x,y,z) \longmapsto I_{\point }(x,y,z)-\frac{1}{n}H(nx,ny,nz)\]
to have a unique minimum at $(0,\s^2,\mu_4)$ for any $n\geq 1$, and to have the same expansion we obtained above. \medskip

\noindent By analysing the essential ingredient of the proof of theorem~2~of~\cite{CerfGorny}, we consider the following hypothesis :

\begin{hyps} Let $\r$ be a symmetric probability measure on $\R$ whose support contains at least five points. We assume that
 \[(0,0,0) \in \Dro_{\L_{\point }} \qquad \mbox{and} \qquad 5\mu_{4}^{2}>2\s^{2}\mu_{6}\,.\]
 
\noindent Let $H$ be a function from $\Theta^{\!*}$ to $\R$. We suppose that there exists $(R_{n})_{n\geq 1}$ a sequence of upper semi-continuous functions from $\Theta^{\!*}$ to $\R$ satisfying, for any $(x,y,z) \in \Theta^{\!*}$,
\[\forall n\geq 1\qquad 0\leq R_{n+1}(x,y,z) \leq R_{n}(x,y,z)\leq R(x,y,z)\,,\]
\[\forall n\geq 1\qquad H(x,y,z)-F(x,y)=nR_{n}\left(\frac{x}{n},\frac{y}{n},\frac{z}{n}\right)\,,\]
\[R_{n}(x,y,z) \underset{n \to + \infty}{\longrightarrow}0\]
and, for every $(x,y,z) \in \R^{3}$,
\[n(R-R_{n})\left(\frac{x}{n^{1/6}},\frac{y}{\sqrt{n}}+\s^{2},\frac{z}{\sqrt{n}}+\mu_{4}\right) \underset{n \to + \infty}{\longrightarrow}0\,.\]
\label{hypHrho}
\end{hyps}
\vspace*{-0.5cm}

\noindent We have the following proposition :

\begin{prop} Suppose that $\r$ and $H$ satisfy the hypothesis~\ref{hypHrho}. If $q$ denotes the definite positive quadratic form of proposition~\ref{DevI*}, then, for any $(x,y,z) \in \R^{3}$,
\[n(I_{\point }-F-R_{n})\left(\frac{x}{n^{1/6}},\frac{y}{\sqrt{n}}+\s^{2},\frac{z}{\sqrt{n}}+\mu_{4}\right) \underset{n \to + \infty}{\longrightarrow}q(y,z)+\frac{5\mu_{4}^{2}-2\s^{2}\mu_{6}}{90\s^{14}}x^{6}\,.\]

\noindent Let $K$ be a compact subset of $\R^{3}$ included in $\Theta^{\!*}$ such that $(0,\s^{2},\mu_{4})$ belongs to the interior of $K$. There exists $n_{0}\geq 1$ such that $I_{\point }-F-R_{n_{0}}$ has a unique minimum on $K$ at $(0,\s^{2},\mu_{4})$.
\label{MinEq(I*-Fn)}
\end{prop}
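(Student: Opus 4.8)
The plan is to establish the two assertions separately. For the pointwise convergence, I would write, with $(x',y',z')=(x/n^{1/6},\,y/\sqrt n+\s^{2},\,z/\sqrt n+\mu_{4})$,
\[n(I_{\point }-F-R_{n})(x',y',z')=n(I_{\point }-F-R)(x',y',z')+n(R-R_{n})(x',y',z')\,;\]
the second summand tends to $0$ by the last requirement of hypothesis~\ref{hypHrho}. For the first one I would use that, by proposition~\ref{DevI*} combined with the expansion of $F+R$ computed in subsection~\ref{FirstInvestigations}, one has, in the neighbourhood of $(0,\s^{2},\mu_{4})$,
\[I_{\point }(u,v,w)-F(u,v)-R(u,v,w)=Au^{6}+q(v-\s^{2},w-\mu_{4})+o\!\left(Au^{6}+q(v-\s^{2},w-\mu_{4})\right),\]
with $A=(5\mu_{4}^{2}-2\s^{2}\mu_{6})/(90\s^{14})>0$. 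Since $q$ is homogeneous of degree $2$, one has $A(x')^{6}+q(y'-\s^{2},z'-\mu_{4})=n^{-1}\bigl(Ax^{6}+q(y,z)\bigr)$, and $(x',y',z')\to(0,\s^{2},\mu_{4})$ as $n\to\infty$ for fixed $(x,y,z)$; multiplying the displayed identity by $n$ then gives $n(I_{\point }-F-R)(x',y',z')\to Ax^{6}+q(y,z)$, which is the announced limit.

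For the existence of a suitable $n_{0}$, I would first note that $(I_{\point }-F-R_{n})(0,\s^{2},\mu_{4})=0$ for every $n$, since $I_{\point }(0,\s^{2},\mu_{4})=0=F(0,\s^{2})$ and $0\le R_{n}(0,\s^{2},\mu_{4})\le R(0,\s^{2},\mu_{4})=0$. Then I would split $K$ into a small closed ball $\overline B$ around $(0,\s^{2},\mu_{4})$, taken so small that $\overline B\subset K$ and, by the expansion above, $I_{\point }-F-R>0$ on $\overline B\setminus\{(0,\s^{2},\mu_{4})\}$; since $R_{n}\le R$, this gives $I_{\point }-F-R_{n}>0$ on $\overline B\setminus\{(0,\s^{2},\mu_{4})\}$ for every $n$. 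On the complementary compact set $K\setminus B$, which avoids $(0,\s^{2},\mu_{4})$, the crucial point is an a priori bound that does not come from the local expansion: from $\L_{\point }(u,v,0)=\L(u,v)$ one gets $I_{\point }(x,y,z)\ge\sup_{u,v}(xu+yv-\L(u,v))=I(x,y)$ on $\R^{3}$, hence by proposition~\ref{minI-F}, $I_{\point }-F\ge I-F\ge0$ on $\Theta^{\!*}$, with equality only at $(0,\s^{2},\mu_{4})$ (vanishing of $I-F$ forces the first two coordinates to be $(0,\s^{2})$, and then $I_{\point }(0,\s^{2},z)=0$ forces $z=\mu_{4}$ by uniqueness of the minimum of $I_{\point }$). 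As $I_{\point }-F$ is lower semi-continuous and $K\setminus B$ is compact, it attains there a minimum $m>0$. Finally, $(R_{n})$ is a nonincreasing sequence of upper semi-continuous functions tending pointwise to $0$, so by Dini's theorem $\sup_{K}R_{n}\to0$; choosing $n_{0}$ with $\sup_{K}R_{n_{0}}<m$ yields $I_{\point }-F-R_{n_{0}}\ge m-\sup_{K}R_{n_{0}}>0$ on $K\setminus B$. Together with the estimate on $\overline B$ and the vanishing at the centre, this shows that $I_{\point }-F-R_{n_{0}}$ is nonnegative on $K$ and vanishes only at $(0,\s^{2},\mu_{4})$.

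I expect the main obstacle to be the control away from $(0,\s^{2},\mu_{4})$: as emphasized in subsection~\ref{FirstInvestigations}, $I_{\point }-F-R$ itself need not be nonnegative on $\Theta^{\!*}$, so the local expansion alone is insufficient there. The device that makes the argument go through is to discard $R$ entirely on $K\setminus B$ and replace it by the two-dimensional inequality $I-F\ge0$ of~\cite{CerfGorny} through $I_{\point }\ge I$, and then to absorb the remaining gap by exploiting the monotone convergence $R_{n}\downarrow0$ via Dini's theorem on the compact $K$. Everything else — the algebraic expansion of $F+R$ and the scaling computation in the first assertion — is routine.
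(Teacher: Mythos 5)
Your proof is correct and follows essentially the same route as the paper's: the same decomposition $G_n=(I_{\point}-F-R)+(R-R_n)$ with the scaling argument for the pointwise limit, and the same split of $K$ into a small neighbourhood of $(0,\s^{2},\mu_{4})$ (handled by the expansion together with $R_n\le R$) and its complement (handled via $I_{\point}\ge I$, proposition~\ref{minI-F}, the uniqueness of the minimum of $I_{\point}$, and the Dini-type lemma for upper semi-continuous functions). The only cosmetic difference is that you apply the Dini variant directly to $R_n\downarrow 0$ and compare $\sup_K R_{n_0}$ with the positive minimum $m$ of the lower semi-continuous function $I_{\point}-F$ on $K\setminus B$, whereas the paper phrases the same step as uniform convergence of $I_{\point}-F-R_n$ to $I_{\point}-F$ on $K\cap U^{c}$.
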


\noindent We will use the following lemma, which is a variant of Dini's theorem :

\begin{lem} Let $(g_n)_{n\geq 1}$ be a non-increasing sequence of functions defined on a compact set $X$ and which converges pointwise to a function $g$ defined on $X$. If the function $g_n-g$ is upper semi-continuous for any $n\geq 1$, then $(g_n)_{n\geq 1}$ converges uniformly over $X$ towards $g$.
\end{lem}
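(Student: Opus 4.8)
The plan is to mimic the classical proof of Dini's theorem, using the upper semi-continuity of $g_n-g$ in place of the continuity of $g_n$ and $g$. First I would set $h_n=g_n-g$ for $n\geq 1$. Since $(g_n)_{n\geq 1}$ is non-increasing and converges pointwise to $g$, each $h_n$ is non-negative, the sequence $(h_n)_{n\geq 1}$ is non-increasing, and $h_n(x)\to 0$ for every $x\in X$; moreover each $h_n$ is upper semi-continuous by hypothesis. Uniform convergence of $(g_n)_{n\geq 1}$ to $g$ is exactly the assertion that $\sup_{x\in X}h_n(x)\to 0$ as $n\to+\infty$.

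Next I would fix $\varepsilon>0$ and introduce, for each $n\geq 1$, the sublevel set $V_n=\{\,x\in X:h_n(x)<\varepsilon\,\}$. Since $h_n$ is upper semi-continuous, the set $\{\,x\in X:h_n(x)\geq\varepsilon\,\}$ is closed in $X$, hence $V_n$ is open in $X$. Because $(h_n)_{n\geq 1}$ is non-increasing we have $V_n\subseteq V_{n+1}$ for every $n$, and because $h_n(x)\to 0$ for every $x$, each point of $X$ lies in some $V_n$; thus $(V_n)_{n\geq 1}$ is an increasing open cover of the compact set $X$. It therefore admits a finite subcover, and since the $V_n$ are nested this forces $V_N=X$ for some $N\geq 1$. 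Consequently $0\leq h_n(x)<\varepsilon$ for all $x\in X$ and all $n\geq N$, i.e.\ $\sup_{x\in X}h_n(x)\leq\varepsilon$ for $n\geq N$. As $\varepsilon>0$ was arbitrary, $\sup_{x\in X}h_n(x)\to 0$, which is the claimed uniform convergence.

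There is essentially no serious obstacle here: the one point to handle with a little care is the reformulation of upper semi-continuity of $h_n$ as the openness of the sublevel sets $\{\,h_n<\varepsilon\,\}$ (equivalently the closedness of the superlevel sets $\{\,h_n\geq\varepsilon\,\}$), which is precisely what allows the compactness argument to proceed exactly as in the classical Dini theorem.
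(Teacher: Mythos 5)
Your proof is correct and follows essentially the same route as the paper: set $h_n=g_n-g$, use upper semi-continuity to make the sublevel sets $\{h_n<\varepsilon\}$ open, cover the compact set $X$, and use monotonicity of $(h_n)_{n\geq 1}$ to reduce the finite subcover to a single set. The only (harmless) addition is your explicit remark that the $h_n$ are non-negative.
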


\begin{proof} For any $n\geq 1$, we put $h_n=g_n-g$. The sequence $(h_n)_{n\geq 1}$ is non-increasing and converges pointwise to the null function. For a fixed $\eps>0$ and for any $n\geq 1$, we denote
\[A_n(\eps)=\{\,x\in X:h_n(x)<\eps\,\}\,.\]
These sets are open since, for any $n\geq 1$, the function $h_n$ is upper semi-continuous. The convergence of the sequence $(h_n)_{n\geq 1}$ implies that
\[X \subset \bigcup_{n\geq 1}A_n(\eps)\,.\]
We can extract a finite subcover : there exists $N\geq 1$ such that
\[X \subset \bigcup_{n\leq N}A_n(\eps)\,.\]
Since $(h_n)_{n\in \N}$ is non-decreasing, then $X\subset A_N(\eps)$. Thus
\[\forall x\in X \qquad \exists N>0 \qquad n\geq N \, \Longrightarrow \, h_n(x)\leq h_N(x)<\eps\,.\]
This proves the lemma.
\end{proof}
\medskip

\noindent{\bf Proof of proposition~\ref{MinEq(I*-Fn)}.}
In the neighbourhood of $(0,\s^{2},\mu_{4})$,
\[I_{\point }(x,y,z)-F(x,y)-R(x,y,z) \sim \frac{(5\mu_{4}^{2}-2\s^{2}\mu_{6})x^6}{90\s^{14}}+q(y-\s^{2},z-\mu_{4})\,.\]
For every $n\geq 1$, we denote \[G_{n}=I_{\point }-F-R_{n}=(I_{\point }-F-R)+(R-R_{n})\,.\]
The expansion of $I_{\point }-F-R_{n}$ and the hypothesis~\ref{hypHrho} imply that, for $(x,y,z) \in \R^3$,
\[n(I_{\point }-F-R_{n})\left(\frac{x}{n^{1/6}},\frac{y}{\sqrt{n}}+\s^{2},\frac{z}{\sqrt{n}}+\mu_{4}\right) \underset{n \to + \infty}{\longrightarrow}q(y,z)+\frac{5\mu_{4}^{2}-2\s^{2}\mu_{6}}{90\s^{14}}x^{6}\,.\]

\noindent Next the function $R-R_{n}$ is non-negative, thus $G_n\geq I_{\point }-F-R$ and there exists an open set $U$ centered at $(0,\s^{2},\mu_{4})$ such that, for any $(x,y,z) \in U$,
\[G_{n}(x,y,z)\geq \frac{1}{2}q(y-\s^{2},z-\mu_{4})+ \frac{5\mu_{4}^{2}-2\s^{2}\mu_{6}}{180\s^{14}}x^{6}\,.\]
The right term of this inequality is non-negative since $5\mu_{4}^{2}>2\s^{2}\mu_{6}$. Since $q$ is a definite positive quadratic form, this term vanishes only at $(0,\s^{2},\mu_{4})$. Thus we proved that, for any $n \geq 1$, $G_{n}$ has a unique minimum on $U$ at $(0,\s^{2},\mu_{4})$ and it is equal to $0$.\medskip

\noindent Without loss of generality, we can suppose that $U$ is included in $K$. The set $K\cap U^{c}$ is a compact subset of $\R^{3}$ included in $\Theta^{\!*}$. Let $\nu_{\r}$ be the law of $(Z,Z^2)$ when $Z$ is a random variable with distribution $\r$. We denote by $\L$ the Log-Laplace of $\nu_{\r}$ and by $I$ its Cram\'er transform. The measure $\r$ is symmetric and $(0,0,0)\in \Dro_{\L_{\point }}$. Moreover we have
\[\forall (u,v) \in \R^2 \qquad \L(u,v)=\L_{\point }(u,v,0)\,.\]
As a consequence $(0,0)\in \Dro_{\L}$ and proposition~\ref{minI-F} implies that the function $I-F$ has a unique minimum at $(0,\s^{2})$ on $\R\times\R\backslash{\{0\}}$. Next, for any $(x,y,z,u,v)\in \R^5$,
\[I_{\point }(x,y,z)\geq  xu+yv+z\times 0-\L_{\point }(u,v,0)=xu+yv-\L(u,v)\,.\]
Taking the supremum over $(u,v)\in \R^2$, it comes that
\[\forall (x,y,z) \in \R\times\R\backslash{\{0\}}\times \R \qquad I_{\point }(x,y,z)-\frac{x^2}{2y}\geq I(x,y)-\frac{x^2}{2y}\,.\]

\noindent Hence, for $(x,y,z) \in K\cap U^{c}$, there are two cases :\smallskip

\noindent $\star$ Either $(x,y)\neq (0,\s^2)$ and then
$I_{\point }(x,y,z)-F(x,y)>0$.\smallskip

\noindent $\star$ Or $(x,y)=(0,\s^2)$ and then $z\neq \mu_4$. The function $I_{\point }$ has a unique minimum at $(0,\s^2,\mu_4)$ in which it is null (see chapter V of~\cite{Rockafellar} for a proof of this result). Thus
\[I_{\point }(0,\s^2,z)-F(0,\s^2)=I_{\point }(0,\s^2,z)>0\,.\]

\noindent In each case
\[\forall (x,y,z) \in K\cap U^{c} \qquad I_{\point }(x,y,z)-\frac{x^2}{2y}>0\,.\]
By hypothesis, the sequence of functions $(R_{n}+F-I_{\point })_{n \geq 1}$ is non-increasing and converges pointwise to $F-I_{\point }$. Moreover, for any $n\geq 1$, $R_{n}+F-I_{\point }$ is upper semi-continuous. Hence the previous lemma implies that $(I_{\point }-F-R_{n})_{n \geq 1}$ converges uniformly to $I_{\point }-F$ on $K\cap U^{c}$. As a consequence there exists $n_0\geq 1$ such that $I_{\point }-F-R_{n_{0}}$ is positive on $K\cap U^{c}$. Hence $I_{\point }-F-R_{n_{0}}$ has a unique minimum on $K\cap U^{c}$ at $(0,\s^{2},\mu_{4})$.\qed

\subsection{Around Varadhan's lemma}

\noindent We saw in section~\ref{Preliminaries} that the law of $(S_{n}/n,T_{n}/n,U_{n}/n)$ under $\widetilde{\mu}_{H,n,\r}$ is
\[\frac{\displaystyle{e^{nF(x,y)+nR_n(x,y,z)}\ind{\Theta^{\!*}}(x,y,z)\,d\widetilde{\nu}_{\point n,\r}(x,y,z)}}{\displaystyle{\int_{\Theta^{\!*}}e^{nF(x,y)+nR_n(x,y,z)}\,d\widetilde{\nu}_{\point n,\r}(x,y,z)}}\,.\]

\noindent We search additional conditions on $H$ and $\r$ so that, if $A$ is a closed set which does not contain $(0,\s^{2},\mu_4)$, then  
\[\limsupn \frac{1}{n}\ln \int_{\Theta^{\!*}\cap A}e^{nF(x,y)+nR_n(x,y,z)}\,d\widetilde{\nu}_{\point n,\r}(x,y,z)<0\,.\]

\noindent To this end, we need a variant of Varadhan's lemma. By proposition~\ref{MinEq(I*-Fn)} we can conclude if $A$ is a compact subset of $\Theta^{\!*}$. We have to extend the criteria on $H$ in order to control what happens around the line $x=y=0$ of $\R^3$. We proceed similarly as in~\cite{CerfGorny}.\medskip

\begin{hyps} Assume that $\r$ and $H$ satisfy the hypothesis~\ref{hypHrho}. We suppose that $\r$ has a bounded support and that, for any $r>0$, there exists $\d>0$ such that
\[\forall (x,y,z) \in \Theta \cap(\R\times]0,\d] \times \R) \quad \forall n \geq 1 \qquad  R_n(x,y,z)\leq r\,.\]
\label{hypHrho2}
\end{hyps}

\begin{hyps} Assume that $\r$ and $H$ satisfy the hypothesis~\ref{hypHrho}. We suppose that there exists $c_0>0$ such that
\[\forall (x,y,z) \in \Theta^{\!*} \quad \forall n\geq 1 \qquad R_n(x,y,z)\leq c_0 y\,.\]
\label{hypHrho2bis}
\end{hyps}

\begin{prop} Let $\r$ and $H$ fulfill the hypothesis~\ref{hypHrho}. We have
\[\liminfn \frac{1}{n}\ln \int_{\Theta^{\!*}}e^{nF(x,y)+nR_n(x,y,z)}\,d\widetilde{\nu}_{\point n,\r}(x,y,z)\geq 0\,.\]
Suppose that $\r$ and $H$ also satisfy either the hypothesis~\ref{hypHrho2} or the hypothesis~\ref{hypHrho2bis}. Then,
for any closed subset $A$ of $\R^3$ which does not contain $(0,\s^{2},\mu_4)$, we have 
\[\limsupn \frac{1}{n}\ln \int_{\Theta^{\!*}\cap A}e^{nF(x,y)+nR_n(x,y,z)}\,d\widetilde{\nu}_{\point n,\r}(x,y,z)<0\,.\]
\label{TypeVaradhan-x4}
\end{prop}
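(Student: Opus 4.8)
\noindent The plan is to prove the two assertions separately: the first is elementary, while the second rests on proposition~\ref{MinEq(I*-Fn)} together with a delicate analysis of what happens near the line $x=y=0$.

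\noindent\emph{Lower bound.} Fix $\eps>0$ and choose a small open ball $O$ centred at $(0,\s^{2},\mu_{4})$ with $\overline{O}\subset\Theta^{\!*}$, $O$ disjoint from $\{y=0\}$, and small enough that $F>-\eps$ on $O$ (possible since $F$ is continuous with $F(0,\s^{2})=0$). As $R_{n}\geq 0$, we have $F+R_{n}\geq-\eps$ on $O$, hence $\int_{\Theta^{\!*}}e^{nF+nR_{n}}\,d\widetilde{\nu}_{\point n,\r}\geq e^{-n\eps}\,\widetilde{\nu}_{\point n,\r}(O)$. Since $\widetilde{\nu}_{\point n,\r}(\Theta)=1$, since $O$ avoids $\{y=0\}$, and since $I_{\point}\geq 0$ vanishes at $(0,\s^{2},\mu_{4})\in O$, the large deviations lower bound gives $\liminfn\frac1n\ln\widetilde{\nu}_{\point n,\r}(O)\geq-\inf_{O}I_{\point}=0$; letting $\eps\to0$ proves the first inequality.

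\noindent\emph{Upper bound.} Fix a closed set $A\not\ni(0,\s^{2},\mu_{4})$ and observe that $F\leq 1/2$ on $\Theta$ (because $x^{2}\leq y$ there). Under hypothesis~\ref{hypHrho2} the laws $\widetilde{\nu}_{\point n,\r}$ are carried by a fixed compact $\Cc$ of $\R^{3}$, and I set $B=\Cc\cap\Theta\cap\{y\geq\d\}$ for a small $\d\in(0,\s^{2})$, a compact subset of $\Theta^{\!*}$ with $(0,\s^{2},\mu_{4})$ in its interior; within the support, $\Theta^{\!*}\setminus B$ is then contained in $\{0<y\leq\d\}$. Under hypothesis~\ref{hypHrho2bis} I take instead $B=\Theta\cap\{\d\leq y\leq Y,\ z\leq Z\}$ with $\d$ small and $Y,Z$ large (again a compact subset of $\Theta^{\!*}$ with $(0,\s^{2},\mu_{4})$ in its interior); the complement of $B$ lies in $\{0<y\leq\d\}\cup\{y>Y\}\cup\{z>Z\}$, and the last two pieces are discarded at once: from $R_{n}\leq c_{0}y$ we get $e^{nF+nR_{n}}\leq e^{n/2}e^{c_{0}T_{n}}$, and since hypothesis~\ref{hypHrho} forces $\int e^{tz^{2}}\,d\r<+\infty$ for every $t>0$ and $\int e^{\th z^{4}}\,d\r<+\infty$ for some $\th>0$, exponential Chebyshev inequalities in $T_{n}$ (resp. $U_{n}$) bound the integrals over $\{y>Y\}$ (resp. $\{z>Z\}$) by quantities whose exponential rate is $<-1/2$ for $Y,Z$ large, hence tending to $0$. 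In both settings proposition~\ref{MinEq(I*-Fn)} provides $n_{0}$ for which $I_{\point}-F-R_{n_{0}}$ attains its unique minimum on $B$ at $(0,\s^{2},\mu_{4})$; as $(0,\s^{2},\mu_{4})\notin A$, it follows that $\eps_{0}:=\inf_{A\cap B}(I_{\point}-F-R_{n_{0}})>0$ by compactness of $A\cap B$. For $n\geq n_{0}$, $R_{n}\leq R_{n_{0}}$ gives $\int_{A\cap B}e^{nF+nR_{n}}\,d\widetilde{\nu}_{\point n,\r}\leq\int_{A\cap B}e^{n(F+R_{n_{0}})}\,d\widetilde{\nu}_{\point n,\r}$, and the large deviations upper bound together with a Varadhan-type argument on the closed set $A\cap B$ (the exponent being upper semicontinuous and bounded above there) yields $\limsupn\frac1n\ln\int_{A\cap B}e^{nF+nR_{n}}\,d\widetilde{\nu}_{\point n,\r}\leq-\eps_{0}<0$.

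\noindent\emph{The region near $\{y=0\}$.} It remains to bound $\int_{\{0<y\leq\d\}\cap\Theta}e^{nF+nR_{n}}\,d\widetilde{\nu}_{\point n,\r}$; this is the heart of the matter. There $F\leq 1/2$ and, by~\ref{hypHrho2} or~\ref{hypHrho2bis}, $R_{n}$ is bounded by a small constant $r$ (one may take $r=c_{0}\d$ under~\ref{hypHrho2bis}), so it suffices to show that
\[\int_{\{0<y\leq\d\}\cap\Theta}e^{nF}\,d\widetilde{\nu}_{\point n,\r}=\E_{\r^{\otimes n}}\!\left[e^{S_{n}^{2}/(2T_{n})}\,\ind{0<T_{n}\leq n\d}\right]\]
has exponential rate $<-r$ for $\d$ small. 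The naive Varadhan bound $\sup_{\{0<y\leq\d\}\cap\Theta}(F-I)$, with $I$ the Cram\'er transform of the law of $(Z,Z^{2})$ under $\r$, does \emph{not} suffice, because $F-I$ need not be strictly negative as one approaches the corner $(0,0)$ when $\r$ has an atom at $0$. Instead I would linearise $F$ by the Gaussian identity
\[e^{S_{n}^{2}/(2T_{n})}=\sqrt{\tfrac{T_{n}}{2\pi}}\int_{\R}e^{\l S_{n}-\frac{\l^{2}}{2}T_{n}}\,d\l\qquad(T_{n}>0)\,,\]
bound $\sqrt{T_{n}/(2\pi)}\leq\sqrt{n\d/(2\pi)}$ on the event, and use Tonelli to reduce to estimating, for each $\l\in\R$, the exponential moment $\E_{\r^{\otimes n}}[e^{\l S_{n}-\frac{\l^{2}}{2}T_{n}}\ind{0<T_{n}\leq n\d}]$; an exponential Chebyshev inequality in $T_{n}$ bounds the latter by $\exp\!\big(n\inf_{\th\geq 0}\big[\th\d+\ln\int e^{\l z-(\frac{\l^{2}}{2}+\th)z^{2}}\,d\r\big]\big)$. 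Following the corresponding lemma of~\cite{CerfGorny}, two things then have to be established: that $\sup_{\l\in\R}\inf_{\th\geq 0}\big[\th\d+\ln\int e^{\l z-(\frac{\l^{2}}{2}+\th)z^{2}}\,d\r\big]$ is strictly negative for $\d$ small — it tends to $\ln\r(\{0\})<0$ as $\d\to0$, while at $\l=0$ it reduces to minus a Cram\'er rate for $T_{n}/n$ — and that the $\l$-integral of these bounds is controlled as $\l\to\pm\infty$ (automatic when $\r$ has bounded support as in~\ref{hypHrho2}, and otherwise supplied by the Gaussian factor $e^{-\frac{\l^{2}}{2}T_{n}}$ together with $\ind{T_{n}>0}$). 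I expect this uniform-in-$\l$ control — reconciling the crude prefactor $\big(\int e^{\l z-\frac{\l^{2}}{2}z^{2}}\,d\r\big)^{n}\leq e^{n/2}$ with the smallness of $\widetilde{\nu}_{\point n,\r}$ near $\{y=0\}$ — to be the main obstacle, just as in~\cite{CerfGorny}. Adding the bulk and tail estimates then gives $\limsupn\frac1n\ln\int_{\Theta^{\!*}\cap A}e^{nF+nR_{n}}\,d\widetilde{\nu}_{\point n,\r}<0$.
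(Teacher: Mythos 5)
Your architecture matches the paper's: decompose $\Theta^{\!*}\cap A$ into the slab $\{0<y\leq\d\}$, a compact piece on which proposition~\ref{MinEq(I*-Fn)} applies, and (under hypothesis~\ref{hypHrho2bis}) an unbounded tail; treat the compact piece with the Varadhan upper bound for the usc function $F+R_{n_0}$ and the monotonicity $R_n\leq R_{n_0}$. The minor variations are harmless: for the lower bound the paper uses Jensen's inequality (since $F+R_n\geq 0$ on $\Theta^{\!*}$) rather than the LDP lower bound on a small ball, and for the tail the paper bounds $I_{\point}-F-R_n\geq w_0z-c_0\sqrt z-\mathrm{const}$ directly from the definition of the Cram\'er transform rather than via exponential Chebyshev in $T_n$ and $U_n$; both routes are correct.

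The one place where your proposal is not a proof is exactly the place you flag: the slab $\{0<y\leq\d\}$. The paper disposes of it in one line by citing Proposition~4 of~\cite{Gorny3}, which asserts the existence of a single $\g>0$ such that $\int e^{nF}\ind{x^2\leq y}\ind{0<y\leq\d}\,d\widetilde{\nu}_{n,\r}\leq e^{-n\g}$ for $\d$ small and $n$ large; hypothesis~\ref{hypHrho2} or~\ref{hypHrho2bis} then lets one absorb $e^{nR_n}\leq e^{n\g/2}$ on that slab. Your Gaussian linearisation $e^{S_n^2/(2T_n)}=\sqrt{T_n/(2\pi)}\int_{\R}e^{\l S_n-\l^2T_n/2}\,d\l$ followed by Chebyshev in $T_n$ is indeed the method by which that cited estimate is established, so you have correctly identified both the right tool and the two genuinely delicate points (strict negativity of $\sup_\l\inf_{\th\geq 0}[\th\d+\ln\int e^{\l z-(\l^2/2+\th)z^2}d\r]$ for small $\d$, and integrability of the resulting bound in $\l$). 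But as written these are announced as expectations rather than proved, and the interchange of $\sup_\l$ with $\d\to 0$ that you invoke to get the limit $\ln\r(\{0\})$ is precisely where the work lies. So: right skeleton, right method for the hard lemma, but the hard lemma itself is left open where the paper closes it by citation.
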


\begin{proof} By Jensen's inequality, we have
\begin{multline*}
\liminfn \frac{1}{n}\ln \int_{\Theta^{\!*}}e^{nF(x,y)+nR_n(x,y,z)}\,d\widetilde{\nu}_{\point n,\r}(x,y,z)\\\geq \liminfn \int_{\Theta^{\!*}}(F(x,y)+R_n(x,y,z))\,d\widetilde{\nu}_{\point n,\r}(x,y,z)\geq 0.
\end{multline*}
Let us show the second inequality. Proposition~4~of~\cite{Gorny3} states that there exists $\g>0$ such that, for $\d \in \,]0,\s^2[$ small enough and $n$ large enough,
\[ \int_{\R^2}e^{nF(x,y)}\ind{x^2\leq y}\ind{0<y\leq \d}\,d\widetilde{\nu}_{n,\r}(x,y) \leq e^{-n\g}\,,\]
where $\widetilde{\nu}_{n,\r}$ denotes the law of $(S_n/n,T_n/n)$ under $\r^{\otimes n}$.\medskip

\noindent The function $H$ satisfies the hypothesis~\ref{hypHrho2} or~\ref{hypHrho2bis} thus we can choose $\d$ small enough so that
\[\forall (x,y,z) \in \Theta \cap(\R\times]0,\d] \times \R) \quad \forall n \geq 1 \qquad  R_n(x,y,z)\leq \frac{\gamma}{2}\,.\]
Hence
\begin{align*}
\int_{\Theta^{\!*}}e^{nF(x,y)+nR_n(x,y,z)}\ind{y\leq \d}&\,d\widetilde{\nu}_{\point n,\r}(x,y,z)\\
&\leq e^{n\g/2}\int_{\Theta^{\!*}}e^{nF(x,y)}\ind{y\leq \d}\,d\widetilde{\nu}_{\point n,\r}(x,y,z)\\
&\leq e^{n\g/2}\int_{\R^3}e^{nF(x,y)}\ind{x^2\leq y}\ind{0<y\leq \d}\,d\widetilde{\nu}_{\point n,\r}(x,y,z)\\
&=e^{n\g/2} \int_{\R^2}e^{nF(x,y)}\ind{x^2\leq y}\ind{0<y\leq \d}\,d\widetilde{\nu}_{n,\r}(x,y)\\
&\leq e^{n\g/2}e^{-n\g}=e^{-n\g/2}\,.
\end{align*}
Thus, for $\d$ small enough,
\[\limsupn \frac{1}{n}\ln \int_{\Theta^{\!*}}e^{nF(x,y)+nR_n(x,y,z)}\ind{y\leq \d}\,d\widetilde{\nu}_{\point n,\r}(x,y,z) \leq -\g/2\,.\]
We define $A_{\d}=\{\,(x,y,z)\in \Theta \cap A : y\geq \d\,\}$. We have
\[\Theta^{\!*}\cap A \subset \{\,(x,y,z)\in \Theta^{\!*} : y\leq \d\,\} \cup A_{\d}\,.\]

\noindent If $\r$ and $H$ satisfy the hypothesis~\ref{hypHrho2bis}, we have, for any $(x,y,z) \in \Theta^{\!*}$ and $n\geq 1$,
\[I_{\point }(x,y,z)-F(x,y)-R_n(x,y,z)\geq I_{\point }(x,y,z)-\frac{1}{2}-c_0 y \geq I_{\point }(x,y,z)-\frac{1}{2}-c_0 \sqrt{z}\,.\]
Since $(0,0,0)\in\Dro_{\L_{\point }}$, there exists $w_0>0$ small enough so that $(0,0,w_0) \in D_{\L_{\point }}$. By the definition of the Cram\'er transform, we have
\[I_{\point }(x,y,z)\geq 0\times x+0\times y+w_0 \times z-\ln \int_{\R}e^{0\times t+0\times t^2+w_0\times t^4}\,d\r(t)\,.\]
As a consequence
\[I_{\point }(x,y,z)-F(x,y)-R_n(x,y,z)\geq w_0 z -c_0\sqrt{z} -\frac{1}{2}-\ln\int_{\R}e^{w_0 t^4}\,d\r(t)\,.\]
The right term converges to $+\infty$ when $z$ goes to $+\infty$ and it does not depend on $x$, $y$ and $n$. As a consequence, there exists $z_0>0$ such that, for any $n \geq 1$,
\[\forall (x,y,z) \in \Theta^{\!*} \cap (\R\times\R\times[z_0,+\infty[) \qquad I_{\point }(x,y,z)-F(x,y)-R_n(x,y,z) \geq 1\,.\]
We put $K=\{\,(x,y,z) \in \Theta : z\leq \max(\,z_0,2\mu_4\,)\, \}$. The above inequality implies that
\[\inf_{n\geq 1}\,\inf_{A_{\d} \cap K^c} (\,I_{\point }-F-R_{n}\,)\geq 1\,.\]
Moreover, we can reduce $\d$ so that the set $\{\,(x,y,z)\in \Theta : y\geq \d\,\}\cap K$ is a compact subset of $\R^3$ included in $\Theta^{\!*}$ and whose interior contains $(0,\s^{2},\mu_{4})$. Thus proposition~\ref{MinEq(I*-Fn)} ensures the existence of $n_{0}\geq 1$ such that $I_{\point }-F-R_{n_{0}}$ has a unique minimum in $\{\,(x,y,z)\in \Theta : y\geq \d\,\}\cap K$ at $(0,\s^{2},\mu_{4})$. Since $I-F-R_{n_0}$ is a good rate function and $A_{\d}\cap K$ does not contain $(0,\s^2,\mu_4)$, we have
\[\inf_{A_{\d}\cap K} \,(\,I_{\point }-F-R_{n_{0}}\,)>0\,.\]
As a consequence
\[\inf_{A_{\d}} \,(\,I_{\point }-F-R_{n_{0}}\,)>0\,.\]

\noindent If $\r$ and $H$ satisfy the hypothesis~\ref{hypHrho2} and if $K'$ denotes the closed convex hull of the support of $\nu_{\point \r}$ (which is then compact), then we can also reduce $\d$ in order to apply proposition~\ref{MinEq(I*-Fn)} and find some $n_0\geq 1$ such that
\[\inf_{A_{\d}} \,(\,I_{\point }-F-R_{n_{0}}\,)=\inf_{A_{\d}\cap K'} \,(\,I_{\point }-F-R_{n_{0}}\,)>0\,.\]

\noindent In both cases, the usual Varadhan lemma (see~\cite{DZ}) implies that there exists $\g_1>0$ such that, for $n$ large enough,
\[ \int_{A_{\d}}e^{nF(x,y)+nR_{n_0}(x,y,z)}\,d\widetilde{\nu}_{\point n,\r}(x,y,z) \leq e^{-n\g_1 }\,.\]
Finally, since $R_n \leq R_{n_0}$ for any $n\geq n_0$, we have
\[\limsupn \frac{1}{n}\ln\int_{A_{\d}}e^{nF(x,y)+nR_{n}(x,y,z)}\,d\widetilde{\nu}_{\point n,\r}(x,y,z) \leq -\g_1\,.\]
Hence
\[\limsupn \frac{1}{n}\ln\int_{\Theta^{\!*}\cap A}e^{nF(x,y)+nR_{n}(x,y,z)}\,d\widetilde{\nu}_{\point n,\r}(x,y,z) \leq \max\left(-\frac{\g}{2}\,,\, -\g_1\right)<0\,.\]
This ends the proof of the proposition.\end{proof}

\subsection{One good candidate}
\label{candidate}

\noindent Let $\r$ satisfies the hypothesis~\ref{hypHrho}. One good candidate for $H$ is
\[H:(x,y,z) \in \Theta^{\!*} \longmapsto \frac{x^{2}}{2y}+\frac{1}{12}\frac{zx^{4}y^{5}}{y^{9}+x^{10}+zx^4y^4}\,.\]
Indeed, the sequence $(R_{n})_{n\geq 1}$ defined by
\[\forall (x,y,z)\in\Theta^{\!*} \quad \forall n \geq 1 \quad R_{n}(x,y,z)=\frac{1}{12}\frac{zx^{4}y^{5}}{y^{9}+nx^{10}+zx^4y^4}\,,\]
consists of upper semi-continuous functions and, for any $(x,y,z) \in \Theta^{\!*}$,
\[\forall n\geq 1\qquad 0\leq R_{n+1}(x,y,z) \leq R_{n}(x,y,z)\leq R(x,y,z)\,,\]
\[\forall n\geq 1\qquad H(x,y,z)-\frac{x^2}{2y}=nR_{n}\left(\frac{x}{n},\frac{y}{n},\frac{z}{n}\right)\,,\]
\[R_{n}(x,y,z) \underset{n \to + \infty}{\longrightarrow}0\,.\]
\noindent We have next
\begin{align*}
(R-R_n)(x,y,z)&=\frac{zx^{4}}{12y^{4}}-\frac{1}{12}\frac{zx^{4}y^{5}}{y^{9}+nx^{10}+zx^4y^4}=\frac{zx^{4}(nx^{10}+zx^4y^4)}{12y^{4}(y^{9}+nx^{10}+zx^4y^4)}\,.
\end{align*}
Evaluating in $(x/n^{1/6},y/\sqrt{n}+\s^{2},z/\sqrt{n}+\mu_{4})$, we get
\[n\left(\frac{x}{n^{1/6}}\right)^{10}+\left(\frac{z}{\sqrt{n}}+\mu_4\right)\left(\frac{x}{n^{1/6}}\right)^{4}\left(\frac{y}{\sqrt{n}}+\s^{2}\right)^{4}\sim \frac{x^{10}+\s^{8}\mu_4}{n^{2/3}}\,,\]
\[\left(\frac{y}{\sqrt{n}}+\s^{2}\right)^{9}+n\left(\frac{x}{n^{1/6}}\right)^{10}+\left(\frac{z}{\sqrt{n}}+\mu_4\right)\left(\frac{x}{n^{1/6}}\right)^{4}\left(\frac{y}{\sqrt{n}}+\s^{2}\right)^{4}\sim \s^{18}\]
and
\[\frac{(z/\sqrt{n}+\mu_{4})x^4n^{-2/3}}{12(y/\sqrt{n}+\s^{2})^4}\sim\frac{\mu_4 x^4}{12\s^8 n^{2/3}}\,.\]
Thus
\begin{align*}
n(R-R_n)\left(\frac{x}{n^{1/6}},\frac{y}{\sqrt{n}}+\s^{2},\frac{z}{\sqrt{n}}+\mu_{4}\right)&\sim n\frac{\mu_4 x^4}{12\s^{8} n^{2/3}\s^{18}}\frac{x^{10}+\s^{8}\mu_4}{n^{2/3}}\\
&\sim \frac{\mu_4 x^4(x^{10}+\s^{8}\mu_4)}{12 \s^{26} n^{1/3}}\underset{n \to + \infty}{\longrightarrow}0\,.
\end{align*}
Hence $H$ satisfies the hypothesis~\ref{hypHrho}. Finally, for any $(x,y,z)\in \Theta^{\!*}$, we have
\[zx^{4}y^{5}\leq y^{10}+nyx^{10}+zx^4y^5=y(y^{9}+nx^{10}+zx^4y^4)\,.\]
Thus $H$ also satisfies the hypothesis~\ref{hypHrho2bis}. 

\section{Fluctuations theorem}
\label{FluctuationsTheorem}

\noindent In this section, we suppose that $\r$ has a density $f$ with respect to the Lebesgue measure on $\R$. We will proceed as in section~7~of~\cite{CerfGorny} to obtain our fluctuations result: we first compute an asymptotic expression of the density of $\nu_{\point \r}^{*n}$, for $n$ large enough. Next we prove a generalisation of theorem~\ref{MainTheo} with the help of Laplace's method.

\subsection{Asymptotic expression of the density of $\nu_{\point \r}^{*n}$}
\label{calculdensite}

\begin{prop} If $\r$ is a probability measure having a density $f$ with respect to the Lebesgue measure on $\R$, then $\nu_{\point \r}^{*3}$ admits a density with respect to the Lebesgue measure on $\R^3$. Suppose that, for some $p \in  \, ]1,2]$,
\begin{equation}
\int_{\R^3}\frac{f^p(x)f^p(y)f^p(z)}{\left|(x+y+z)(x-y)(y-z)(z-x)\right|^{p-1}}\,dx\,dy\,dz<+\infty.\tag{$*$}
\end{equation}
Then, for $n$ large enough, $\widetilde{\nu}_{\point n,\r}$ has a density $g_{n}$ with respect to the Lebesgue measure on $\R^{3}$ such that, for any compact subset $K$ of $A_{I_{\point }}$, when $n$ goes to $+\infty$, uniformly over $(x,y,z)\in K$,
\[g_{n}(x,y,z)\sim \left(\frac{n}{2\pi}\right)^{3/2}\left(\mathrm{det}\, \mathrm{D}_{(x,y,z)}^{3}I_{\point }\right)^{1/2} e^{-nI_{\point }(x,y,z)}\,.\]
\label{densitef3}
\end{prop}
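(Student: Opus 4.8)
The plan is to follow the scheme of section~7 of~\cite{CerfGorny}: first make the density of $\nu_{\point\r}^{*3}$ explicit, then turn condition~$(*)$ into integrability of the characteristic function of a large convolution power, and finally extract the asymptotics through an Esscher change of measure combined with a local limit theorem for densities, uniform over the compact $K$.

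I would first establish that $\nu_{\point\r}^{*3}$ has a density. The measure $\nu_{\point\r}^{*3}$ is the image of $f(x_{1})f(x_{2})f(x_{3})\,dx_{1}\,dx_{2}\,dx_{3}$ under $\Phi:(x_{1},x_{2},x_{3})\longmapsto(x_{1}+x_{2}+x_{3},\,x_{1}^{2}+x_{2}^{2}+x_{3}^{2},\,x_{1}^{4}+x_{2}^{4}+x_{3}^{4})$, whose Jacobian is the generalized Vandermonde determinant
\[\mathrm{Jac}\,\Phi(x_{1},x_{2},x_{3})=\det\!\begin{pmatrix}1&1&1\\ 2x_{1}&2x_{2}&2x_{3}\\ 4x_{1}^{3}&4x_{2}^{3}&4x_{3}^{3}\end{pmatrix}=8\,(x_{1}+x_{2}+x_{3})(x_{1}-x_{2})(x_{2}-x_{3})(x_{3}-x_{1})\,,\]
which vanishes only on a Lebesgue-null set, and for which the system $\Phi(x_{1},x_{2},x_{3})=(s,t,u)$ has at most a fixed finite number $N$ of solutions. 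The change of variables formula then shows that $\nu_{\point\r}^{*3}$ has the density
\[h_{3}(s,t,u)=\sum\frac{f(x_{1})f(x_{2})f(x_{3})}{8\,\big|(x_{1}+x_{2}+x_{3})(x_{1}-x_{2})(x_{2}-x_{3})(x_{3}-x_{1})\big|}\,,\]
the sum running over the preimages of $(s,t,u)$; in particular $\nu_{\point\r}^{*n}$ has a density $h_{n}$ for every $n\geq 3$. Reverting this change of variables (and using $(a_{1}+\cdots+a_{N})^{p}\leq N^{p-1}(a_{1}^{p}+\cdots+a_{N}^{p})$ for the several preimages), condition~$(*)$ is precisely what guarantees $h_{3}\in L^{p}(\R^{3})$, the Jacobian absorbing one of the $p$ powers of $|(x+y+z)(x-y)(y-z)(z-x)|$.

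Since $\widehat{h_{3}}=(\widehat{\nu_{\point\r}})^{3}$ and $p\in\,]1,2]$, the Hausdorff--Young inequality gives $(\widehat{\nu_{\point\r}})^{3}\in L^{p'}(\R^{3})$ with $p'=p/(p-1)\in[2,+\infty[$; together with $|\widehat{\nu_{\point\r}}|\leq 1$ this forces $(\widehat{\nu_{\point\r}})^{n}\in L^{1}(\R^{3})$ for all $n$ larger than some fixed $n_{0}$, so that $h_{n}$ is then continuous, bounded and recovered by Fourier inversion. Moreover $\widehat{\nu_{\point\r}}(\xi)\to 0$ as $|\xi|\to\infty$ (Riemann--Lebesgue applied to $h_{3}\in L^{1}$, since $\widehat{h_{3}}=(\widehat{\nu_{\point\r}})^{3}$), and the modulus equals $1$ only at the origin because $\r$ has a density; hence $\sup_{|\xi|\geq\eps}|\widehat{\nu_{\point\r}}(\xi)|<1$ for every $\eps>0$. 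For $(x,y,z)$ in a compact set $K\subset A_{I_{\point}}$, I would then set $(u,v,w)=\nabla I_{\point}(x,y,z)=(\nabla\L_{\point})^{-1}(x,y,z)$, which ranges in a compact subset of $\Dro_{\L_{\point}}$ by the diffeomorphism property recalled in section~\ref{ExpansionI*}. The tilted probability measure $d\r_{(u,v,w)}(t)=e^{ut+vt^{2}+wt^{4}-\L_{\point}(u,v,w)}\,d\r(t)$ has $\nu_{\point}$-image $\widetilde\nu$ with mean $(x,y,z)$ and covariance $\Sigma=\mathrm{D}^{2}_{(u,v,w)}\L_{\point}$, and the density $\widetilde h_{n}$ of $\widetilde\nu^{*n}$ satisfies the Esscher identity $\widetilde h_{n}(\zeta)=e^{\langle(u,v,w),\zeta\rangle-n\L_{\point}(u,v,w)}h_{n}(\zeta)$. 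Taking $\zeta=n(x,y,z)$ and using $I_{\point}(x,y,z)=\langle(u,v,w),(x,y,z)\rangle-\L_{\point}(u,v,w)$, one obtains $g_{n}(x,y,z)=n^{3}h_{n}(n(x,y,z))=n^{3}e^{-nI_{\point}(x,y,z)}\widetilde h_{n}(n(x,y,z))$, so it only remains to prove that $\widetilde h_{n}$ at its mean $n(x,y,z)$ is equivalent to $(2\pi n)^{-3/2}(\det\Sigma)^{-1/2}$, uniformly over $K$; since the Hessians of $I_{\point}$ and of $\L_{\point}$ are inverse to one another at corresponding points, this is exactly the announced equivalent.

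The crux is this last uniform local limit theorem. Exactly as in section~7 of~\cite{CerfGorny}, one first verifies that $\widetilde h_{3}=e^{\langle(u,v,w),\cdot\rangle-3\L_{\point}}h_{3}$ again belongs to $L^{q}(\R^{3})$ for some $q\in\,]1,2]$, uniformly for $(u,v,w)$ in the compact set $\nabla I_{\point}(K)$ --- this relies on condition~$(*)$ and on $\nabla I_{\point}(K)$ lying in the interior of $\Dro_{\L_{\point}}$. As in the previous step it follows that $(\widehat{\widetilde\nu})^{n}\in L^{1}(\R^{3})$ and $\sup_{|\xi|\geq\eps}|\widehat{\widetilde\nu}(\xi)|\leq\theta_{\eps}<1$, uniformly over $K$. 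Fourier inversion with the rescaling $\xi=\eta/\sqrt n$ then gives
\[\widetilde h_{n}\big(n(x,y,z)\big)=\frac{1}{(2\pi)^{3}n^{3/2}}\int_{\R^{3}}\widehat{\widetilde\nu}(\eta/\sqrt n)^{n}\,e^{-i\langle\eta,\sqrt n\,(x,y,z)\rangle}\,d\eta\,;\]
since $\widehat{\widetilde\nu}(\xi)=e^{i\langle(x,y,z),\xi\rangle}\big(1-\tfrac12\xi^{T}\Sigma\xi+o(|\xi|^{2})\big)$, the exponential phase cancels and the integrand tends pointwise to $e^{-\frac12\eta^{T}\Sigma\eta}$. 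A uniform bound $\Sigma\succeq c\,\mathrm{Id}$ over $K$ dominates the integrand on $\{|\eta|\leq\eps\sqrt n\}$ by $e^{-c|\eta|^{2}}$, while the remaining part is negligible because $n^{3/2}\!\int_{|\xi|>\eps}|\widehat{\widetilde\nu}(\xi)|^{n}\,d\xi\leq n^{3/2}\theta_{\eps}^{\,n-n_{0}}\,\|\widehat{\widetilde\nu}\|_{L^{n_{0}}}^{n_{0}}\to 0$; dominated convergence then gives $\int_{\R^{3}}(\cdots)\,d\eta\to(2\pi)^{3/2}(\det\Sigma)^{-1/2}$, uniformly over $K$ since all ingredients depend continuously on $(x,y,z)$ over this compact set. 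Together with the Esscher reduction this proves the proposition. The delicate points, just as in~\cite{CerfGorny}, are the uniform $L^{q}$-control of the tilted densities and the uniformity in the dominated-convergence argument.
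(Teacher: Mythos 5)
Your proposal is correct, and its first half coincides with the paper's argument: the explicit change of variables $\phi:(x_1,x_2,x_3)\mapsto(x_1+x_2+x_3,x_1^2+x_2^2+x_3^2,x_1^4+x_2^4+x_3^4)$, the Jacobian $8(x_1+x_2+x_3)(x_1-x_2)(x_2-x_3)(x_3-x_1)$, the resulting density of $\nu_{\point\r}^{*3}$ as a finite sum over preimages, and the observation that condition $(*)$ is exactly the statement that this density lies in $\Ll^p$ (the Jacobian cancelling one of the $p$ powers of the denominator). The paper is more careful than you are on one point you gloss over: to justify the preimage-sum formula it partitions $\R^3$ minus the four hyperplanes into the six ordering cells $O_1,\dots,O_6$ and proves injectivity of $\phi$ on each by exhibiting the cubic polynomial $P_{(u,v,w)}$ whose roots are the coordinates of any preimage, before invoking the global inverse function theorem; your "at most $N$ solutions" assertion is true but would need this argument to be made rigorous. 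Where the two proofs genuinely diverge is the second half: the paper simply quotes Proposition A.6 of~\cite{CerfGorny}, which states precisely that an $\Ll^p$ density for some convolution power yields, for large $n$, a density $g_n$ with the uniform asymptotics $\left(\frac{n}{2\pi}\right)^{3/2}\left(\det \mathrm{D}^2_{(x,y,z)}I_{\point}\right)^{1/2}e^{-nI_{\point}(x,y,z)}$ on compact subsets of $A_{I_{\point}}$. You instead reprove that cited result from scratch via Hausdorff--Young, the Esscher tilt $(u,v,w)=\nabla I_{\point}(x,y,z)$, and a uniform local limit theorem by Fourier inversion; this is the standard proof of the quoted proposition and your sketch of it is sound (including the uniform positive-definiteness of $\Sigma$ over $K$ and the uniform bound $\theta_\eps<1$ away from the origin), so the extra work buys self-containedness but no new content relative to the paper's reference.
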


\begin{proof} Let $\Phi$ be a measurable positive function on $\R^3$. We have
\begin{align*}
\int_{\R^3}&\Phi(x,y,z)\,d\nu_{\point \r}^{*3}(x,y,z)\\
&=\int_{\R^9}\Phi(u_1+u_2+u_3,v_1+v_2+v_3,w_1+w_2+w_3)\\
&\qquad\qquad\times d\nu_{\point \r}(u_1,v_1,w_1)\,d\nu_{\point \r}(u_2,v_2,w_2)\,d\nu_{\point \r}(u_3,v_3,w_3)\\
&=\int_{\R^3}\Phi(x+y+z,x^2+y^2+z^2,x^4+y^4+z^4)\,d\r(x)\,d\r(y)\,d\r(z)\\
&=\int_{\R^3}\Phi(x+y+z,x^2+y^2+z^2,x^4+y^4+z^4)\,f(x)f(y)f(z)\,dx\,dy\,dz\,.
\end{align*}
Let us make the change of variables given by
\[\phi : (x,y,z) \in \R^3 \longmapsto (x+y+z,x^2+y^2+z^2,x^4+y^4+z^4)\,.\]
The function $\phi$ is $\Ck{1}$ on $\R^3$. We compute its Jacobian : for any $(x,y,z)\in \R^3 $, we have
\begin{align*}
\mathrm{Jac}_{(x,y,z)}\phi&=\begin{vmatrix}
1&1&1\\
2x&2y&2z\\
4x^3&4y^3&4z^3
\end{vmatrix}=8\begin{vmatrix}
1&0&0\\
x&y-x&z-x\\
x^3&y^3-x^3&z^3-x^3
\end{vmatrix}\\
&=8\left((y-x)(z^3-x^3)-(z-x)(y^3-x^3)\right)\\
&=8\left((y-x)(z-x)(z^2+xz+x^2)-(z-x)(y-x)(y^2+xy+x^2)\right)\\
&=8(y-x)(z-x)\left(z^2+xz+x^2-y^2-xy-x^2)\right)\\
&=8(y-x)(z-x)(z-y)(x+y+z)\,.
\end{align*}
We introduce the set
\begin{multline*}
\Hc=\{\,(x,y,z)\in \R^3 : x+y+z=0\,\}\,\cup\, \{\,(x,y,z)\in \R^3 : x=y\,\}\\\,\cup\, \{\,(x,y,z)\in \R^3 : x=z\,\}\,\cup\, \{\,(x,y,z)\in \R^3 : y=z\,\}\,.
\end{multline*}
It is the union of four hyperplanes on which the Jacobian of $\phi$ vanishes. We define next
\begin{align*}
O_{1}=\{\,(x,y,z)\in \R^3 : x<y<z\,\},\qquad &O_{2}=\{\,(x,y,z)\in \R^3 : x<z<y\,\}\,,\\
O_{3}=\{\,(x,y,z)\in \R^3 : y<x<z\,\},\qquad &O_{4}=\{\,(x,y,z)\in \R^3 : y<z<x\,\}\,,\\
O_{5}=\{\,(x,y,z)\in \R^3 : z<x<y\,\},\qquad &O_{6}=\{\,(x,y,z)\in \R^3 : z<y<x\,\}\,.
\end{align*}
The six open sets $O_{1},\dots,O_{6}$ are a partition of $\R^3\backslash \Hc$. On each of these open sets, the Jacobian of $\phi$ does not vanish. The set $\Hc$ is negligible with respect to the Lebesgue measure on $\R^3$, thus
\begin{align*}
\int_{\R^3}\!\Phi(x,y,z)\,d\nu_{\point \r}^{*3}(x,y,z)&=\sum_{i=1}^6\int_{O_{i} }\Phi(\phi(x,y,x))f(x)f(y)f(z)\,dx\,dy\,dz\\
&=\sum_{i=1}^6\int_{O_{i}}\Phi(\phi(x,y,x))\,g(x,y,z)\left|\mathrm{Jac}_{(x,y,z)}\phi\right|\,dx\,dy\,dz,
\end{align*}
where $g$ is the function defined on $\R^3\backslash \Hc$ by
\[\forall (x,y,z) \in \R^3\backslash \Hc\qquad g(x,y,z)=\frac{f(x)f(y)f(z)}{\left|8(x+y+z)(x-y)(y-z)(z-x)\right|}\,.\]

\noindent On each open set $O_{i}$, $i\in\{1,\dots,6\}$, the function $\phi$ is $\Ck{1}$ and its Jacobian does not vanish. In order to apply the global version of the inverse function theorem, we still have to prove that $\phi$ is one to one on each of these open sets. Let $(u,v,w) \in \R^3$ such that there exists $(x,y,z) \in \R^3 \backslash \Hc$ which verifies $(u,v,w)=\phi(x,y,z)$. We have then
\[x+y=u-z, \qquad x^2+y^2=v-z \qquad \mbox{et} \qquad x^4+y^4=w-z^4\,.\]
We search a polynomial equation satisfied by $z$. We have
\begin{align*}
\left( x+y \right) ^{2} \left( {x}^{2}+{y}^{2} \right) &={x}^{4}+{y}^{4}+2\,{x}^{2}{y}^{2}+2\,{x}^{3}y+2\,x{y}^{3}\\
&={x}^{4}+{y}^{4}+2\,xy \left(xy+{x}^{2}+{y}^{2}\right)\\
&={x}^{4}+{y}^{4}+\left((x+y)^2-(x^2+y^2)\right) \frac{(x+y)^2+x^2+y^2}{2}\,.
\end{align*}
Hence
\[\left( u-z \right) ^{2} \left( v-{z}^{2} \right) =w-{z}^{4}+ \left( 
 \left( u-z \right) ^{2}-v+{z}^{2} \right)\frac{(u-z)^2+v-z^2}{2}\,.\]
By developping, we get
\[4\,u{z}^{3}-4\,{u}^{2}{z}^{2}+2u\left( u^2-v \right) z+{u}^{2}v-\frac{u^4}{2}+\frac{v^2}{2}-w=0\,.\]
Since $(x,y,z) \notin \Hc$, we have $u=x+y+z \neq 0$ and thus $P_{(u,v,w)}(z)=0$ with
\[P_{(u,v,w)}(X)=X^{3}-uX^{2}+\frac{u^2-v}{2} X+\frac{uv}{4}-\frac{u^3}{8}+\frac{v^2}{8u}-\frac{w}{4u}\,.\]
Since $x$, $y$ and $z$ are exchangeable in the expression of $\phi$, we also have that
\[P_{(u,v,w)}(x)=P_{(u,v,w)}(y)=0\,.\]

\noindent Let $i\in\{1,\dots,6\}$. We have shown that, if $(u,v,w) \in \R^3$ is such that there exists $(x,y,z) \in O_i$ satisfying $(u,v,w)=\phi(x,y,z)$, then $x$, $y$ and $z$ are the zeros of the cubic polynomial $P_{(u,v,w)}$. As a consequence $\phi$ is one to one on $O_i$. Thus, by the global version of the inverse function theorem (see theorem 3.8.10~of~\cite{Schwartz2}), for any $i\in\{1,\dots,6\}$, the map $\phi$ is a $\Ck{1}$-diffeomorphism from $O_{i}^+$ to $\phi(O_{i}^+)$. We denote by $\phi_{i}^{-1}$ its inverse function.\medskip

\noindent Since $x$, $y$ and $z$ are exchangeable in the expression of $\phi$, we get that all the sets $\phi(O_i)$, $i\in\{1,\dots,6\}$, are equal to some set $\Uc$ and, for any $(u,v,w)\in \Uc$, the coordinates of $\phi_{i}^{-1}(u,v,w)$, $i\in\{1,\dots,6\}$, are the same up to a non-trivial permutation. As a consequence
\[\forall i\in\{1,\dots,6\} \qquad g \circ \phi_{i}^{-1}=g \circ \phi_{1}^{-1}\,.\]
Let us make the change of variables given by $\phi$ on each open set $O_{i}$ :
\[ \int_{\R^3}\Phi(x,y,z)\,d\nu_{\point \r}^{*3}(x,y,z)=\sum_{i=1}^6\int_{\phi(O_{i})}\Phi(u,v,w)\,g \circ \phi_{i}^{-1}(u,v,w)\,du\,dv\,dw\,.\]
The previous remarks about the symmetric structure of $\phi$ imply that
\[ \int_{\R^3}\Phi(x,y,z)\,d\nu_{\point \r}^{*3}(x,y,z)=\int_{\R^3}\Phi(u,v,w)\,6\,g \circ \phi_{1}^{-1}(u,v,w)\ind{\Uc}(u,v,w)\,du\,dv\,dw\,.\]
Hence $\nu_{\point \r}^{*3}$ admits the density $f_3=6\,g \circ \phi_{1}^{-1}\times\ind{\Uc}$ with respect to the Lebesgue measure on $\R^3$.\medskip

\noindent Next, for any $p \in [1,+\infty[$, we have
\[\int_{\R^3}f_3^p(u,v,w)\,du\,dv\,dw=6^p\int_{\Uc}\left(g \circ \phi_{1}^{-1}(u,v,w)\right)^p\,du\,dv\,dw\,.\]
Let us make the change of variables given by $\phi_{1}^{-1}$ :
\[\int_{\R^3}f_3^p(u,v,w)\,du\,dv\,dw=6^p\int_{O_{1}}g^p(x,y,z)\left|\mathrm{Jac}_{(x,y,z)}\phi\right|\,dx\,dy\,dz\,.\]
By symmetry, we write this integral
\begin{align*}
\int_{\R^3}f_3^p(u,v,w)\,du\,dv\,dw&=6^{p-1}\sum_{i=1}^6\int_{O_{i}}g^p(x,y,z)\left|\mathrm{Jac}_{(x,y,z)}\phi\right|\,dx\,dy\,dz\\
&=6^{p-1}\int_{\R^3}g^p(x,y,z)\left|\mathrm{Jac}_{(x,y,z)}\phi\right|\,dx\,dy\,dz\,.
\end{align*}
This is equal to
\[ \left(\frac{3}{4}\right)^{p-1}\int_{\R^3}\frac{f^p(x)f^p(y)f^p(z)}{\left|(x+y+z)(x-y)(y-z)(z-x)\right|^{p-1}}\,dx\,dy\,dz\,.\]
As a consequence, if this integral is finite for some $p \in  \, ]1,2]$, then $f_3 \in \Ll^p$. Thus proposition~A.6\footnote{Actually it is proposition 16 of to the ARXIV version of~\cite{CerfGorny}.}~of~\cite{CerfGorny} implies that, for $n$ large enough, $\widetilde{\nu}_{\point n,\r}$ has a density $g_{n}$ with respect to the Lebesgue measure on $\R^{3}$ such that, for any compact subset $K$ of $A_{I_{\point }}$, when $n \to +\infty$, uniformly over $(x,y,z)\in K$,
\[g_{n}(x,y,z)\sim \left(\frac{n}{2\pi}\right)^{3/2}\left(\mathrm{det}\, \mathrm{D}_{(x,y,z)}^{3}I_{\point }\right)^{1/2} e^{-nI_{\point }(x,y,z)}\,.\]
This ends the proof of the proposition.
\end{proof}
\bigskip

\noindent Let us prove that, if $f$ is bounded, then  there exists $p \in \,]1,2]$ such that
\[\int_{\R^3}\frac{f^{p}(x)f^{p}(y)f^{p}(z)}{\left|(x+y+z)(x-y)(y-z)(z-x)\right|^{p}}\,dx\,dy\,dz<+ \infty\,.\]
Young's inequality implies that, for any positive real numbers $a,b,c$ and $d$,
\[\frac{1}{abcd}\leq \frac{1}{2}\left(\frac{1}{(ab)^2}+\frac{1}{(cd)^2}\right)\leq \frac{1}{4}\left(\frac{1}{a^4}+\frac{1}{b^4}+\frac{1}{c^4}+\frac{1}{d^4}\right)\,.\]
By this inequality and by the symmetry of the integral in $x$, $y$ and $z$ we have
\[\int_{\R^3}\frac{f^{p}(x)f^{p}(y)f^{p}(z)}{\left|(x+y+z)(x-y)(y-z)(z-x)\right|^{p-1}}\,dx\,dy\,dz\leq \frac{1}{4}\left(I_1+3I_2\right)\,,\]
with
\[I_1=\int_{\R^3}\frac{f^{p}(x)f^{p}(y)f^{p}(z)}{\left|x+y+z\right|^{4(p-1)}}\,dx\,dy\,dz\,,\]
\[I_2=\int_{\R^3}\frac{f^{p}(x)f^{p}(y)f^{p}(z)}{\left|x-y\right|^{4(p-1)}}\,dx\,dy\,dz\,.\]
Making the change of variable $(x,y,z) \longmapsto(x+y+z,y,z)$, we get
\begin{align*}
I_1&=\int_{\R^3}\frac{f^{p}(u-v-w)f^{p}(v)f^{p}(w)}{\left|u\right|^{4(p-1)}}\,du\,dv\,dw\\
&=\int_{[-1,1]\times\R^2}\frac{f^{p}(u-v-w)f^{p}(v)f^{p}(w)}{\left|u\right|^{4(p-1)}}\,du\,dv\,dw\\
&\qquad\qquad\qquad\qquad\qquad\qquad+\int_{[-1,1]^c\times\R^2}\frac{f^{p}(u-v-w)f^{p}(v)f^{p}(w)}{\left|u\right|^{4(p-1)}}\,du\,dv\,dw\\
&\leq \|f\|_{\infty}^p \int_{[-1,1]\times\R^2}\frac{f^{p}(v)f^{p}(w)}{\left|u\right|^{4(p-1)}}\,du\,dv\,dw\\
&\qquad\qquad\qquad\qquad\qquad\qquad+\int_{[-1,1]^c\times\R^2}f^{p}(u-v-w)f^{p}(v)f^{p}(w)\,du\,dv\,dw\,.
\end{align*}
Fubini's theorem implies that
\[I_1\leq\|f\|_{\infty}^p \left(\int_{-1}^1\frac{du}{\left|u\right|^{4(p-1)}}\right)\left(\int_{\R}f^{p}(x)\,dx\right)^2+\left(\int_{\R}f^{p}(x)\,dx\right)^3\,.\]
We have
\[\int_{\R}f^{p}(x)\,dx=\int_{\R}f^{p-1}(x)f(x)\,dx\leq \|f\|_{\infty}^{p-1}\int_{\R}f(x)\,dx<+\infty\,.\]
Thus $I_1<+\infty$ as soon as $p<5/4$, since the function $u\longmapsto |u|^{4(1-p)}$ is then integrable on $[-1,1]$. We show similarly that $I_2<+\infty$ as soon as $p<5/4$. Hence
\[\forall p \in [1,5/4[ \qquad\int_{\R^3}\frac{f^{p}(x)f^{p}(y)f^{p}(z)}{\left|(x+y+z)(x-y)(y-z)(z-x)\right|^{p}}\,dx\,dy\,dz<+ \infty\,.\]

\subsection{Proof of theorem~\ref{MainTheo}}
\label{FluctuationsTheoremProof}

\noindent We prove in fact a more general fluctuation theorem than theorem~\ref{MainTheo}.\medskip

\noindent Let $\r$ be a symmetric probability measure on $\R$ and let $H$ be a real-valued function defined on $\Theta^{\!*}$ such that, for any $n\geq 1$,
\begin{multline*}
Z_{H,n}=\int_{\R^{n}}\exp\left(H\left(x_1+\cdots+x_n,\,x_1^2+\cdots+x_n^2,\,x_1^4+\cdots+x_n^4\right)\right)\\
\times\ind{\{x_{1}^{2}+\dots+x_{n}^{2}>0\}}\,\prod_{i=1}^{n}d\r(x_{i})<+\infty\,.
\end{multline*}

\noindent We consider $(X_{n}^{k})_{1\leq k \leq n}$ an infinite triangular array of real-valued random variables  such that, for all $n \geq 1$, $(X^{1}_{n},\dots,X^{n}_{n})$ has the law $\widetilde{\mu}_{H,n,\r}$, which is the distribution with density
\[\frac{1}{Z_{H,n}}\exp\left(H\left(x_1+\cdots+x_n,\,x_1^2+\cdots+x_n^2,\,x_1^4+\cdots+x_n^4\right)\right)\ind{\{x_{1}^{2}+\dots+x_{n}^{2}>0\}}\]
with respect to $\r^{\otimes n}$. We denote
\[S_{n}=X^{1}_{n}+\dots+X^{n}_{n}, \quad T_{n}=(X^{1}_{n})^{2}+\dots+(X^{n}_{n})^{2} \quad \mbox{and} \quad U_{n}=(X^{1}_{n})^{4}+\dots+(X^{n}_{n})^{4}.\]
We have the following general fluctuation theorem :

\begin{theo} Let $\r$ be a symmetric probability measure on $\R$ whose support contains at least five points and such that
\[ \exists w_0>0 \qquad  \int_{\R} e^{w_0 z^4}\,d\r(z)<+\infty\,.\]
We denote by $\s^2$ the variance of $\r$, by $\mu_4$ its the fourth moment, by $\mu_6$ its sixth moment and by $\mu_8$ its eighth moment. We assume~that
\[5\mu_{4}^{2}>2\s^{2}\mu_{6}\,.\]
Suppose that $H$ satisfies the hypothesis~\ref{hypHrho} and that $\r$ and $H$ fulfill either the hypothesis~\ref{hypHrho2} or the hypothesis~\ref{hypHrho2bis}. Then, under $\widetilde{\mu}_{H,n,\r}$, $(S_{n}/n,T_{n}/n,U_n/n)$ converges in probability towards $(0,\s^{2},\mu_4)$.\medskip

\noindent Moreover, if $\r$ has a density $f$ with respect to the Lebesgue measure on $\R$ such that, for some $p \in  \, ]1,2]$,
\[\qquad \quad\,\,\,\, \int_{\R^3}\frac{f^p(x)f^p(y)f^p(z)}{\left|(x+y+z)(x-y)(y-z)(z-x)\right|^{p-1}}\,dx\,dy\,dz<+\infty\,,\,\,\,\,\quad\qquad (*)\]
then, under $\widetilde{\mu}_{H,n,\r}$
\[\left(\frac{\mu_4^2}{\s^2}-\frac{2\mu_6}{5}\right)^{1/6}\frac{S_{n}}{\s^2n^{5/6}} \overset{\Lc}{\underset{n \to \infty}{\longrightarrow}} \left(\frac{81}{2}\right)^{1/6} \G\left(\frac{1}{6}\right)^{-1}\exp\left(-\frac{s^6}{18}\right)\,ds\,.\]
\label{TheoremModeleX4}
\end{theo}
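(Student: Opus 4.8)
The plan is to follow the scheme of section~7 of~\cite{CerfGorny}, combining the Varadhan-type estimate of proposition~\ref{TypeVaradhan-x4} with the pointwise density asymptotics of proposition~\ref{densitef3} and a two-scale Laplace analysis around $(0,\s^2,\mu_4)$, the two scales being $n^{1/6}$ for the first coordinate and $n^{1/2}$ for the other two. Recall from section~\ref{Preliminaries} that under $\widetilde{\mu}_{H,n,\r}$ the law of $(S_n/n,T_n/n,U_n/n)$ has density $\frac{1}{Z_{\point n}}\,e^{nF(x,y)+nR_n(x,y,z)}\,\ind{\Theta^{\!*}}(x,y,z)$ with respect to $\widetilde{\nu}_{\point n,\r}$, where $Z_{\point n}=\int_{\Theta^{\!*}}e^{nF+nR_n}\,d\widetilde{\nu}_{\point n,\r}$.

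For the convergence in probability, proposition~\ref{TypeVaradhan-x4} directly gives $\liminf_n \tfrac1n\ln Z_{\point n}\geq 0$ and, for every closed $A\subset\R^3$ with $(0,\s^2,\mu_4)\notin A$, a constant $\g_A>0$ with $\int_{\Theta^{\!*}\cap A}e^{nF+nR_n}\,d\widetilde{\nu}_{\point n,\r}\leq e^{-n\g_A}$ for $n$ large; hence $\widetilde{\mu}_{H,n,\r}\big((S_n/n,T_n/n,U_n/n)\in A\big)\leq e^{-n\g_A}/Z_{\point n}\to 0$, and applying this to the complements of balls around $(0,\s^2,\mu_4)$ yields the first assertion.

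For the fluctuations, put $A=(5\mu_4^2-2\s^2\mu_6)/(90\s^{14})>0$ and let $q$ be the quadratic form of proposition~\ref{DevI*}. I would study the rescaled vector $\big(S_n/n^{5/6},\,\sqrt n(T_n/n-\s^2),\,\sqrt n(U_n/n-\mu_4)\big)$, whose density at $(x,y,z)$ equals $n^{-7/6}$ times the density of $(S_n/n,T_n/n,U_n/n)$ at $\big(xn^{-1/6},\,\s^2+yn^{-1/2},\,\mu_4+zn^{-1/2}\big)$. Fix a bounded open neighbourhood $W$ of $(0,\s^2,\mu_4)$ whose closure is a compact subset of $A_{I_{\point}}$ and on which, as established in the proof of proposition~\ref{MinEq(I*-Fn)}, $G_n(x,y,z)\geq\tfrac12 q(y-\s^2,z-\mu_4)+\tfrac A2 x^6$ for all $n$. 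On $\overline W$, proposition~\ref{densitef3} gives $g_n(x,y,z)\sim(n/2\pi)^{3/2}(\det \mathrm{D}^2_{(x,y,z)}I_{\point})^{1/2}e^{-nI_{\point}(x,y,z)}$ uniformly, so on $W$ the density above equals $\frac{n^{1/3}}{(2\pi)^{3/2}Z_{\point n}}\,(\det \mathrm{D}^2 I_{\point})^{1/2}e^{-nG_n}(1+o(1))$. After the substitution, proposition~\ref{MinEq(I*-Fn)} gives the pointwise limit $nG_n\big(xn^{-1/6},\s^2+yn^{-1/2},\mu_4+zn^{-1/2}\big)\to q(y,z)+Ax^6$, the uniform bound gives the domination $nG_n\geq\tfrac12 q(y,z)+\tfrac A2 x^6$ (using the homogeneity of $q$), and $\det \mathrm{D}^2_{(x,y,z)}I_{\point}\to\det \mathrm{D}^2_{(0,\s^2,\mu_4)}I_{\point}=1/(\s^2 a)$. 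Dominated convergence then shows that $n^{-1/3}\int_W e^{nF+nR_n}g_n$ converges; since $\int_{\Theta^{\!*}\setminus W}e^{nF+nR_n}\,d\widetilde{\nu}_{\point n,\r}$ is exponentially small by proposition~\ref{TypeVaradhan-x4}, the same limit holds for $n^{-1/3}Z_{\point n}$, namely $(2\pi)^{-1/2}\s^{-1}\int_\R e^{-As^6}\,ds$. Plugging this back, the density of the rescaled vector converges pointwise on $\R^3$ to a probability density proportional to $e^{-q(y,z)-Ax^6}$; since these are probability densities and the pointwise limit is one too, Scheffé's lemma upgrades the convergence to $\Ll^1$, hence gives convergence in law, and the first marginal shows that $S_n/n^{5/6}$ converges in law to the measure proportional to $e^{-As^6}\,ds$. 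A short scaling computation finishes the proof: writing $\l=\mu_4^2/\s^2-2\mu_6/5$, one has $A\s^{12}/\l=\tfrac1{18}$, so $\l^{1/6}S_n/(\s^2 n^{5/6})$ converges in law to the measure proportional to $e^{-s^6/18}\,ds$, and $\int_\R e^{-s^6/18}\,ds=\tfrac13\,18^{1/6}\G(1/6)$ together with $3\cdot 18^{-1/6}=(81/2)^{1/6}$ yields the stated normalised density.

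The main obstacle is the Laplace analysis above: one has to split $\Theta^{\!*}$ into a macroscopic part $\Theta^{\!*}\setminus W$, controlled exponentially by proposition~\ref{TypeVaradhan-x4}, and a mesoscopic part $W$, where the two scales $n^{1/6}$ and $n^{1/2}$ are coupled; there the argument relies on the uniform quadratic-plus-sextic lower bound for $G_n$ to provide an integrable dominating function in the rescaled coordinates, and on the density estimate of proposition~\ref{densitef3} holding uniformly on the compact subsets of $A_{I_{\point}}$ that the shrinking box meets. Controlling $Z_{\point n}$ precisely to order $n^{1/3}$ is exactly what makes the limit come out as an honest probability density and lets Scheffé's lemma close the argument.
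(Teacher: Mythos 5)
Your proposal is correct and follows essentially the same route as the paper: exponential control outside a neighbourhood via proposition~\ref{TypeVaradhan-x4}, the density asymptotics of proposition~\ref{densitef3}, the anisotropic rescaling $(n^{1/6},n^{1/2},n^{1/2})$, the pointwise limit of $nG_n$ from proposition~\ref{MinEq(I*-Fn)} together with the uniform sextic-plus-quadratic lower bound for domination, and the same final normalisation computation. The only (harmless) deviation is cosmetic: the paper concludes by testing against bounded continuous functions and taking the ratio $(A_n+B_n)/Z_{H,n}$, whereas you converge the rescaled densities pointwise and invoke Scheff\'e's lemma, which rests on exactly the same estimates and even yields convergence in total variation.
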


\noindent We proved in the previous subsection that, if $f$ is bounded, then it satisfies $(*)$ for any $p\in \,]1,5/4[$. We have also proved in section~\ref{ConstructionInteraction}.\ref{candidate} that the function
\[H:(x,y,z) \in \Theta^{\!*} \longmapsto \frac{x^{2}}{2y}+\frac{1}{12}\frac{zx^{4}y^{5}}{y^{9}+x^{10}+zx^4y^4}\]
satisfies the hypothesis~\ref{hypHrho} and~\ref{hypHrho2bis}. Hence theorem~\ref{MainTheo} is a consequence of this theorem.\medskip

\noindent Considering our article~\cite{Gorny3}, we could prove this fluctuation theorem for $\r$ having an absolutely continuous component (and not necessarily having a density which satisfies $(*)$) or, more generally, for $\r$ satisfying the Cram\'er condition
\begin{equation}
\forall \a>0 \qquad \sup_{\|(s,t,u)\|\geq \a}\left|\int_{\R}e^{isz+itz^2+iuz^4}\,d\r(z)\right|<1.\tag{$C$}
\end{equation}
However the proof would be much more technical.\medskip

\noindent{\bf Proof of proposition~\ref{TheoremModeleX4}.} We denote by $\theta_{\point n,\r}$ the law of $(S_{n}/n,T_{n}/n,U_n/n)$ under $\widetilde{\mu}_{\point n,\r}$. Let $U$ be an open neighbourhood of $(0,\s^{2},\mu_4)$ in $\R^{3}$. Suppose that $\r$ and $H$ satisfy the hypothesis~\ref{hypHrho} and also either the hypothesis~\ref{hypHrho2} or the hypothesis~\ref{hypHrho2bis}. Then proposition~\ref{TypeVaradhan-x4} implies that
\begin{align*}
\limsupn \frac{1}{n}\ln \theta_{\point n,\r}(U^{c})&=\limsupn \frac{1}{n}\ln \int_{\Theta^{\!*}\cap U^{c}}e^{nF(x,y)+nR_n(x,y,z)}\,d\widetilde{\nu}_{\point n,\r}(x,y,z)\\
&\qquad -\liminfn \frac{1}{n}\ln \int_{\Theta^{\!*}}e^{nF(x,y)+nR_n(x,y,z)}\,d\widetilde{\nu}_{\point n,\r}(x,y,z)<0.
\end{align*}
Hence there exist $\eps >0$ and $n_{0}\in \N$ such that for any $n>n_{0}$, 
\[ \theta_{\point n,\r}(U^{c}) \leq e^{-n\eps} \underset{n \to \infty}{\longrightarrow} 0\,.\]
Thus, for each open neighbourhood $U$ of $(0,\s^{2},\mu_4)$,
\[\lim_{n \to +\infty}\widetilde{\mu}_{\point n,\r}\left(\left(\frac{S_{n}}{n},\frac{T_{n}}{n},\frac{U_{n}}{n}\right)\in U^{c}\right)=0\,.\]
This means that, under $\widetilde{\mu}_{\point n,\r}$, $(S_{n}/n,T_{n}/n,U_n/n)$ converges in probability to $(0,\s^{2},\mu_4)$.\medskip

\noindent Next, in section~\ref{ConstructionInteraction}.\ref{FirstInvestigations}, we proved that, in the neighbourhood of $(0,\s^{2},\mu_{4})$,
\[I_{\point }(x,y,z)-F(x,y)-R(x,y,z) \sim \frac{(5\mu_{4}^{2}-2\s^{2}\mu_{6})x^6}{90\s^{14}}+q(y-\s^{2},z-\mu_{4})\,.\]
Thus, there exists $\d>0$ such that, for any $(x,y,z) \in \Bl_{\d}$,
\[(I_{\point}-F-R)(x,y,z) \geq \frac{(5\mu_{4}^{2}-2\s^{2}\mu_{6})x^6}{180\s^{14}}+\frac{1}{2}q(y-\s^{2},z-\mu_{4})\,,\]
where $\Bl_{\d}$ denotes the open ball of radius $\d$ centered at $(0,\s^{2},\mu_4)$. We have $R_n\leq R$ for any $n\geq 1$, thus, for any $(x,y,z) \in \Bl_{\d}$ and $n\geq 1$,
\begin{equation}
(I_{\point}-F-R_n)(x,y,z) \geq \frac{(5\mu_{4}^{2}-2\s^{2}\mu_{6})x^6}{180\s^{14}}+\frac{1}{2}q(y-\s^{2},z-\mu_{4}).
\label{**}
\end{equation}
We can reduce $\d$, in order to have $\Bl_{\d}\subset K$ where $K$ is a compact subset of $A_{I_{\point}}$ so that $\Bl_{\d} \subset A_{I_{\point}} \subset  \Theta^{\!*}$.\medskip

\noindent Let $n \geq 1$ and let $f :\R \longrightarrow \R$ be a bounded continuous function. We have
\begin{align*}
\E_{\tilde{\mu}_{n,\r}}\left(f\left(\frac{S_{n}}{n^{5/6}}\right)\right)&=\frac{1}{Z_{H,n}}\int_{\Theta^{\!*}}f(xn^{1/6})\,e^{nF(x,y)+nR_n(x,y,z)}\,d\widetilde{\nu}_{\point n,\r}(x,y,z)\\
&=\frac{A_{n}+B_{n}}{Z_{H,n}}\,,
\end{align*}
with
\[A_{n}=\int_{\Bl_{\d}}f(xn^{1/6})\,e^{nF(x,y)+nR_n(x,y,z)}\,d\widetilde{\nu}_{\point n,\r}(x,y,z)\,,\]
\[B_{n}=\int_{\Theta^{\!*}\cap\Bl^{c}_{\d}}f(xn^{1/6})\,e^{nF(x,y)+nR_n(x,y,z)}\,d\widetilde{\nu}_{\point n,\r}(x,y,z)\,.\]

\noindent Suppose in addition that $\r$ has a density $f$ with respect to the Lebesgue measure on $\R$ such that, for some $p \in  \, ]1,2]$, the function
\[(x,y,z)\longmapsto \frac{f^p(x)f^p(y)f^p(z)}{\left|(x+y+z)(x-y)(y-z)(z-x)\right|^{p-1}}\]
is integrable. Then proposition~\ref{densitef3} implies that, for $n$ large enough, $\widetilde{\nu}_{\point n,\r}$ has a density $g_{n}$ with respect to the Lebesgue measure on $\R^{3}$. Let us introduce the factor $e^{-nI_{\point}(x,y,z)}$ in the expression of $A_{n}$ :\[A_{n}=n^{3/2}\int_{\Bl_{\d}}f(xn^{1/6})\,e^{-nG_n(x,y,z)}H_{n}(x,y,z)\,dx\,dy\,dz\,,\]
where $G_n=I_{\point}-F-R_n$ and
$H_{n}:(x,y,z) \longmapsto e^{nI_{\point}(x,y,z)}g_{n}(x,y,z)/n^{3/2}$.
We define
\[\Bl_{\d,n}=\{\,(x,y,z) \in \R^{3} : x^{2}/n^{1/3}+y^{2}/n+z^{2}/n \leq \d^{2}\,\}\,.\]
Let us make the change of variables given by
\[(x,y,z) \longmapsto (xn^{-1/6},yn^{-1/2}+\s^{2},zn^{-1/2}+\mu_4)\,,\] with Jacobian $n^{-7/6}$ :
\begin{multline*}
A_{n}=n^{1/3}\int_{\Bl_{\d,n}}f(x)\exp\left(-nG_n\left(\frac{x}{n^{1/6}},\frac{y}{\sqrt{n}}+\s^{2},\frac{z}{\sqrt{n}}+\mu_4\right)\right)\\
\times H_{n}\left(\frac{x}{n^{1/6}},\frac{y}{\sqrt{n}}+\s^{2},\frac{z}{\sqrt{n}}+\mu_4\right)\,dx\,dy\,dz\,.
\end{multline*}
We check now that we can apply the dominated convergence theorem to this integral. The uniform expansion of $g_n$ given by proposition~\ref{densitef3} means that for any $\a>0$, there exists $n_{0} \in \N$ such that, for any $n\geq n_0$,
\[\forall (x,y,z) \in K \qquad \left|H_{n}(x,y,z)\,(2\pi)^{3/2} \left(\mathrm{det}\, \mathrm{D}_{(x,y,z)}^{2}I_{\point }\right)^{-1/2} -1\right|\leq \a\,.\]
If $(x,y,z) \in \Bl_{\d,n}$, then $(x_{n},y_{n},z_n)=(xn^{-1/6},yn^{-1/2}+\s^{2},zn^{-1/2}+\mu_4) \in \Bl_{\d}$, which is included in $K$. Thus for all $n\geq n_{0}$ and $(x,y,z) \in \Bl_{\d,n}$,
\[\left|H_{n}(x_{n},y_{n},z_n)\,(2\pi)^{3/2} \left(\mathrm{det}\, \mathrm{D}_{(x_{n},y_{n},z_n)}^{2}I_{\point }\right)^{-1/2} -1\right|\leq \a\,.\]
Moreover $(x_{n},y_{n},z_n) \to (0,\s^{2},\mu_4)$ thus, by continuity,
\[\left(\mathrm{D}_{(x_{n},y_{n},z_n)}^{2}I_{\point }\right)^{-1/2} \underset{n\to +\infty}{\longrightarrow} \left(\mathrm{D}_{(0,\s^{2})}^{2}I_{\point } \right)^{-1/2} = \left(\mathrm{D}_{(0,0)}^{2}\L_{\point }\right)^{1/2}\,,\]
whose determinant is $\sqrt{\s^{2}a}$, with $a=(\mu_4 - \s^4)(\mu_8-\mu_4^2)-(\mu_6-\s^2\mu_4)^2$. It is positive according to lemma~\ref{Support5}. Therefore
\[\ind{\Bl_{\d,n}}(x,y,z)H_{n}\left(x_{n},y_{n},z_n\right)\underset{n\to +\infty}{\longrightarrow} \left((2\pi)^{3} \s^{2}a\right)^{-1/2}\,.\]
Next, proposition~\ref{MinEq(I*-Fn)} implies that, for any $(x,y,z) \in \R^3$,
\[\exp\left(-nG_n\left(x_{n},y_{n},z_n\right)\right)\underset{n \to + \infty}{\longrightarrow}\exp\left(-q(y,z)-\frac{5\mu_{4}^{2}-2\s^{2}\mu_{6}}{90\s^{14}}x^{6}\right)\,.\]
Let us check that the integrand is dominated by an integrable function, which is independent of $n$. The function
\[(x,y,z) \longmapsto \left(\mathrm{D}_{(x,y,z)}^{2}I_{\point}\right)^{-1/2}\]
is bounded on $\Bl_{\d}$ by some $M_{\d}>0$. The uniform expansion of $g_{n}$ implies that for all $(x,y,z) \in \Bl_{\d}$, $H_{n}(x,y,z)\leq C_{\d}$ for some constant $C_{\d}>0$. Finally, the inequality~(\ref{**}) above yields that, for any $(x,y,z)\in \R^3$,
\begin{multline*}
\ind{\Bl_{\d,n}}(x,y,z)f(x)\exp\left(-nG_n\left(x_{n},y_{n},z_n\right)\right)\, H_{n}\left(x_{n},y_{n},z_n\right)\\
\leq \|f\|_{\infty}C_{\d}\exp\left(-\frac{1}{2}q(y,z)-\frac{5\mu_{4}^{2}-2\s^{2}\mu_{6}}{180\s^{14}}x^{6}\right)\,.
\end{multline*}
The right term defines an integrable function on $\R^{3}$, thus it follows from the dominated convergence theorem that
\[A_{n}\underset{+\infty}{\sim}n^{1/3}\int_{\R^{3}}\frac{f(x)}{\sqrt{2 \pi \s^{2}}(2 \pi\sqrt{a})}\exp\left(-q(y,z)-\frac{5\mu_{4}^{2}-2\s^{2}\mu_{6}}{90\s^{14}}x^{6}\right)dx\,dy\,dz\,.\]
Fubini's theorem implies that, for some constant $k>0$,
\[A_{n}\underset{+\infty}{\sim}k n^{1/3}\int_{\R}f(x)\exp\left(-\frac{5\mu_{4}^{2}-2\s^{2}\mu_{6}}{90\s^{14}}x^{6}\right)\,dx\,.\]

\noindent Let us focus now on $B_{n}$. Proposition~\ref{TypeVaradhan-x4} implies that there exists $\eps>0$ such that, for $n$ large enough,
$B_{n} \leq \|f\|_{\infty}e^{-n\eps}$ and thus $B_{n}=o(n^{1/3})$. Therefore
\[A_{n}+B_{n}\underset{+\infty}{\sim}k n^{1/3}\int_{\R}f(x)\exp\left(-\frac{5\mu_{4}^{2}-2\s^{2}\mu_{6}}{90\s^{14}}x^{6}\right)\,dx\,.\]
Applying this to $f=1$, we get
\[Z_{n}\!\underset{+\infty}{\sim}\!kn^{1/3}\!\int_{\R}\!\exp\left(\!-\frac{5\mu_{4}^{2}-2\s^{2}\mu_{6}}{90\s^{14}}x^{6}\!\right)\,dx=3k n^{1/3}\left(\!\frac{5\mu_{4}^{2}-2\s^{2}\mu_{6}}{90\s^{14}}\!\right)^{-1/6}\!\Gamma\left(\frac{1}{6}\right).\]
Finally
\[\E_{\tilde{\mu}_{n,\r}}\left(f\left(\frac{S_{n}}{n^{3/4}}\right)\right)\underset{+\infty}{\sim}
\frac{\displaystyle{\int_{\R}f(x)\,\exp\left(-\frac{5\mu_{4}^{2}-2\s^{2}\mu_{6}}{90\s^{14}}x^{6}\right)\,dx}}{\displaystyle{3 \left(\frac{5\mu_{4}^{2}-2\s^{2}\mu_{6}}{90\s^{14}}\right)^{-1/6} \Gamma\left(\frac{1}{6}\right) }}\,.\]
The ultimate change of variables $y=(5\mu_{4}^{2}-2\s^{2})^{1/6}x/(5\s^{14})^{1/6}$ gives us theorem~\ref{TheoremModeleX4}. \qed

\section{Fluctuations of order $n^{1-1/2k}$?}
\label{OrdreSuivant}

\noindent Let $k\geq 4$. We denote by $I_{\point k}$ the Cram\'er transform of $(Z,Z^2,Z^4,\dots,Z^{2k-2})$, where $Z$ is a random variable with distribution $\r$. We would like to find a large class of probability measures $\r$ on $\R$ such that :\smallskip

\noindent $\star$ There exists an interacting function $H_k$ from $\R^k$ to $\R$ such that, for any $n \geq 1$, the function
\[G_{n,k} : (y_1,y_2,\dots,y_{2k-2})\longmapsto I_{\point k}(y_1,y_2,\dots,y_{2k-2})-\frac{1}{n}H_k(ny_1,ny_2,\dots,ny_{2k-2})\]
admits a unique minimum at $(0,\s^2,\mu_4,\dots,\mu_{2k-2})$, where $\s^2,\mu_4,\dots,\mu_{2k-2}$ are the successive moments of $\r$.\smallskip

\noindent $\star$ For any $n \geq 1$, we denote by $Z_{n,k}$ the integral
\[\int_{\R^n}\exp\left(H_k\left(x_1+\cdots+x_n,\,x_1^2+\cdots+x_n^2,\,\dots,\,x_1^{2k}+\cdots+x_n^{2k}\right)\right)\,\prod_{i=1}^n\,d\r(x_i)\]
and we suppose it is finite.\smallskip

\noindent $\star$ There exist $A_k>0$ and a function $q_k$ from $\R^{k-1}$ to $\R$ satisfying
\[\int_{\R}e^{-q_k(y_2,\dots,y_{2k-2})/2}\,dy_2 \,\dots\,dy_{2k-2}<+\infty\]
such that, in a neighbourhood of $(0,\s^2,\mu_4,\dots,\mu_{2k-2})$, 
\[G_{n,k}(y_1,y_2,\dots,y_{2k-2})\sim A_k y_1^{2k} + q_k(y_2-\s^2,\dots,y_{2k-2}-\mu_{2k-2})\,.\]

\noindent In this case, we consider $(X^{n}_{k})_{1\leq k \leq n}$ an infinite triangular array of real-valued random variables such that, for all $n \geq 1$, $(X^{1}_{n},\dots,X^{n}_{n})$ has the distribution 
\[\frac{1}{Z_{n,k}}\exp\left(H_k\left(x_1+\cdots+x_n,\,x_1^2+\cdots+x_n^2,\,\dots,\,x_1^{2k}+\cdots+x_n^{2k}\right)\right)\,\prod_{i=1}^n\,d\r(x_i)\,.\]
We denote $S_n=X_1^n+\dots+X_n^n$ for any $n\geq 1$. By using arguments as in the last sections, we could prove the following : 
\[\frac{S_n}{n^{1-1/2k}} \overset{\Lc}{\underset {n \to \infty}{\longrightarrow}}   \left(\int_{\R}\exp(-A_k y^{2k})\,dy\right)^{-1}\,\exp(-A_k x^{2k})\,dx\,.\]

\noindent Unfortunately the proof of such a result does not seem to be possible with the techniques we employed in this paper, for several reasons : \medskip

\noindent $\star$ In order to obtain the expansion of $G_{n,k}$, in the case $k=2$ or $3$, we made very long and tedious computations. Of course we could repeat these computations for $k=4$, then $k=5$, ... But it would be very complicated and this is not reasonable if we do not find a simple way to determine the variable $A_k$ for any $k\geq 4$. Moreover we have not understood why, for $k=2,3$, the terms \og we do not want \fg{} in the expansion of $G_{n,k}$ vanish.\medskip

\noindent $\star$ For $k=3$, there are probability measures such that $A_k$ is negative. In the same way, there may exist $k_0\geq 4$ such that $A_{k_{0}}<0$ for any probability measure. In this case, $G_{n,k_{0}}$ could not admit a minimum at $(0,\s^2,\mu_4,\dots,\mu_{2k-2})$ and we should find new criteria on $H_{k_0}$ to solve this problem.\medskip

\noindent $\star$ With the \og natural \fg{} interacting function in the case $k=3$, we have not managed to prove that $G_{n,3}$ has a unique minimum at $(0,\s^2,\mu_4)$ (while our simulations tend to conjecture this is true). We had to force the interacting function to have the desired behaviour by finding some suitable criteria. Moreover the candidate we propose for $H$ is rather complicated. We also failed to make convincing computer simulations with our modified model (although it is amenable to mathematical analysis) : the convergence is too slow because\[n(R-R_n)\left(\frac{x_1+\dots+x_n}{n},\frac{x_1^2+\dots+x^2_n}{n},\cdots,\frac{x_1^{2k-2}+\dots+x_n^{2k-2}}{n}\right)\]
becomes negligible only for very large $n$.

\bibliographystyle{plain}
\bibliography{biblio}
\addcontentsline{toc}{section}{References}
\markboth{\uppercase{References}}{\uppercase{References}}


\end{document}